\numberwithin{equation}{section}
\numberwithin{figure}{section}
\theoremstyle{remark}
\newtheorem*{rem*}{\protect\remarkname}
\theoremstyle{plain}
\newtheorem*{cor*}{\protect\corollaryname}
\theoremstyle{plain}
\newtheorem*{question*}{\protect\questionname}
\theoremstyle{plain}
\newtheorem{thm}{\protect\theoremname}[section]
\theoremstyle{remark}
\newtheorem{rem}[thm]{\protect\remarkname}
\theoremstyle{plain}
\newtheorem{lem}[thm]{\protect\lemmaname}
\theoremstyle{plain}
\newtheorem{prop}[thm]{\protect\propositionname}
\theoremstyle{plain}
\newtheorem{cor}[thm]{\protect\corollaryname}
\theoremstyle{definition}
\newtheorem{defn}[thm]{\protect\definitionname}
\numberwithin{equation}{section}
\numberwithin{figure}{section}
 \let\footnote=\endnote
\theoremstyle{theorem}
\newtheorem{thmx}{Theorem}
\def\d{\delta}
\def\R{\mathbb{R}}
\def\N{\mathbb{N}}
\def\ep{\varepsilon}
\def\vphi{\varphi}
\def\cal{\mathcal}
\def\T{\mathcal{T}}
\def\F{\mathcal{F}}
\subjclass[2000]{}
\thanks{
Chen was partially supported by startup funding from IU Indianapolis (account No. 23-937-10), and Kao was partially supported by Simons Foundation grant No. 956047. A portion of this work was carried out during Kao's visit to the National Center for Theoretical Sciences (NCTS) in Taiwan, and he gratefully acknowledges NCTS for their support and hospitality. }
\def\J{\mathcal{J}}
\def\v{\mathsf{v}}
\def\R{\mathbb{R}}
\def\T{\mathcal{T}}
\def\CC{\mathcal{C}}
\def\ep{\varepsilon}
\def\T{\mathcal{T}}
\def\F{\mathcal{F}}
\def\Sing{\mathrm{Sing}}
\def\Reg{\mathrm{Reg}}
\def\vp{\varphi}
\def\wt{\tilde}
\newcommand{\II}{\text{II}}
  \providecommand{\corollaryname}{Corollary}
  \providecommand{\definitionname}{Definition}
  \providecommand{\lemmaname}{Lemma}
  \providecommand{\propositionname}{Proposition}
  \providecommand{\remarkname}{Remark}
  \providecommand{\theoremname}{Theorem}
\providecommand{\questionname}{Question}
\author[Chen]{Dong Chen}
\address{Indiana University Indianapolis, Indianapolis, IN 46202, USA}
\email{dc45@iu.edu}
\author[Kao]{Lien-Yung Kao}
\address{George Washington University, Washington, D.C. 20052, USA}
\email{lkao@gwu.edu}
\author[Park]{Kiho Park}
\email{kiho.park12@gmail.com}
\dedicatory{Dedicated to the memory of Todd Fisher}
\providecommand{\corollaryname}{Corollary}
\providecommand{\definitionname}{Definition}
\providecommand{\lemmaname}{Lemma}
\providecommand{\propositionname}{Proposition}
\providecommand{\questionname}{Question}
\providecommand{\remarkname}{Remark}
\providecommand{\theoremname}{Theorem}
\begin{document}
\title[Pressure gaps and geometric potentials]{ Pressure gaps, Geometric potentials, and nonpositively curved manifolds }
\begin{abstract}
In this paper, we derive a general pressure gap criterion for closed rank 1 manifolds with singular sets characterized by codimension 1 totally geodesic flat subtori. As an application, we demonstrate that under specific curvature constraints, potentials that decay faster than geometric potentials (towards the singular set) exhibit pressure gaps and lack phase transitions. Additionally, we prove that geometric potentials are Hölder continuous near singular sets.
\end{abstract}

\maketitle

\section{Introduction. }

This paper is dedicated to characterizing pressure gaps in nonuniformly hyperbolic dynamical systems originating from geometric contexts. Broadly speaking, pressure gaps can be interpreted as indicating that the ``magnitude'' of ``nonuniform hyperbolicity'', as measured by the topological pressure from the perspective of a potential, is small. It has been demonstrated that the presence of a pressure gap is crucial for a potential and its equilibrium states to exhibit ergodic properties similar to those in uniformly hyperbolic systems, including uniqueness, equidistribution, and the Bernoulli property.

The work of Burns, Climenhaga, Fisher, and Thompson \cite{burns2018unique} is one of the pioneering studies on pressure gaps in a geometric setting (see the historical remarks in this section). However, the characterization of pressure gaps remains incomplete. This paper aims to further investigate pressure gaps within natural and concrete geometric contexts.

Let $M$ be a closed, connected, smooth $n$-dimensional manifold, and $g$ be a $C^{m+2}$ ($m>0$) rank 1 Riemannian metric on $M$. Let $\mathcal{F}=(f_{t})_{t\in\R}$ be the geodesic flow on the unit tangent bundle of the Riemannian manifold $(M,g)$. The \textit{topological pressure} $P(\varphi)$ of a potential $\varphi$ is the supremum of the free energy $h_{\mu}({\cal F})+\int\varphi\, d\mu$ over ${\cal F}$-invariant Borel probability measures, where $h_{\mu}({\cal F})$ is the measure-theoretic entropy. A measure that achieves this supremum is called an \textit{equilibrium state} of $\varphi$.

For the geodesic flow ${\cal F}$, ``nonuniform hyperbolicity'' arises from a geometric object on $T^{1}M$, namely, the \textit{singular set} $\mathrm{Sing}$ (see Section \ref{sec:Preliminary} for the precise definition). A potential $\varphi$ is said to have a \textit{pressure gap} if $P(\mathrm{Sing},\varphi)<P(\varphi)$, where $P(\mathrm{Sing},\varphi)$ is the pressure restricted to the ${\cal F}$-invariant set $\mathrm{Sing}$. In this paper, we prove that if $\varphi$ decays rapidly enough near the singular set, then $\varphi$ has a pressure gap.

\subsection*{Setting}

We shall introduce notations and the setup to contextualize our results. Throughout the paper, we assume the following conditions on the Riemannian manifold $M$:

\begin{enumerate}
\item[(C1)] $M$ is a closed rank 1 manifold with nonpositive sectional curvature.
\item[(C2)] $\mathrm{Sing}$ is either $T^{1}T_{0}$ or the \textit{flat strip} case $T^{1}T_{0}\times[-1,1]$, where $T_{0}$ is a codimension 1 totally geodesic flat torus.
\item[(C3)] $M$ is negatively curved outside $T_{0}$ or the flat strip $T_{0}\times[-1,1]$.
\end{enumerate}

For these types of manifolds, it is convenient to study the behavior of geodesics near $T_{0}$ or $T_{0}\times[-1,1]$ using \textit{Fermi coordinates} (see Section \ref{sec:codim1} for more details). For $p\in M$, let $x(p)$ be the signed distance from $p$ to $T_{0}$ or $T_{0}\times[-1,1]$, and $s(p)$ the closest point on $T_{0}$ or $T_{0}\times[-1,1]$ from $p$. The map $s:M\to T_{0}$ is a projection onto $T_{0}$, which induces a projection \textcolor{black}{$ds:TM\to TT_{0}$.} 
%The map 
%\[
%\tilde{\pi}:T^{1}M\to T^{1}T_{0},\quad v\mapsto\frac{\pi(v)}{|\pi(v)|}
%\]
%is the projection with respect to the unit tangent bundles. 

For $v\in T_{p}M$, we consider two components from its Fermi coordinates: $x_{v}=x(p)$, and the signed angle $\phi_{v}$ between $v$ and the hypersurface $x=x(p)$. We define the \textit{radial curvature} $K_{\perp}(v)$ at $v$ as the sectional curvature of the tangent plane spanned by $\{v,  X\}$ where $X=\partial/\partial x$.

We are particularly interested in the following two types of manifolds:
\begin{itemize}
\item $M$ is \textit{Type 1} if $M$ has an order $m$ \textbf{uniform} curvature bound, i.e., $K_{\perp}$ vanishes uniformly to order $m-1$ over $T_{0}$ or the flat strip (see (\ref{eq: uniform curvature control}) for more details).
\item $M$ is \textit{Type 2} if $\dim M=2$ and $M$ has an order $m$ \textbf{nonuniform} curvature bound, i.e., the Gaussian curvature vanishes up to order $m-1$ over $T_{0}$ or the flat strip (see (\ref{eq: nonuniform-1}) and (\ref{eq: nonuniform-2})).
\end{itemize}
In Examples subsection \ref{examples}, we discuss manifolds satisfying the above hypotheses in more detail. For example, surfaces of genus greater than one with analytic Riemannian metrics are Type 2 manifolds. See Figure \ref{fig:type1} and Figure \ref{fig:type2} for some illustrations.

\begin{figure}[h]
\subfloat[No flat strips]{\includegraphics[scale=0.38]{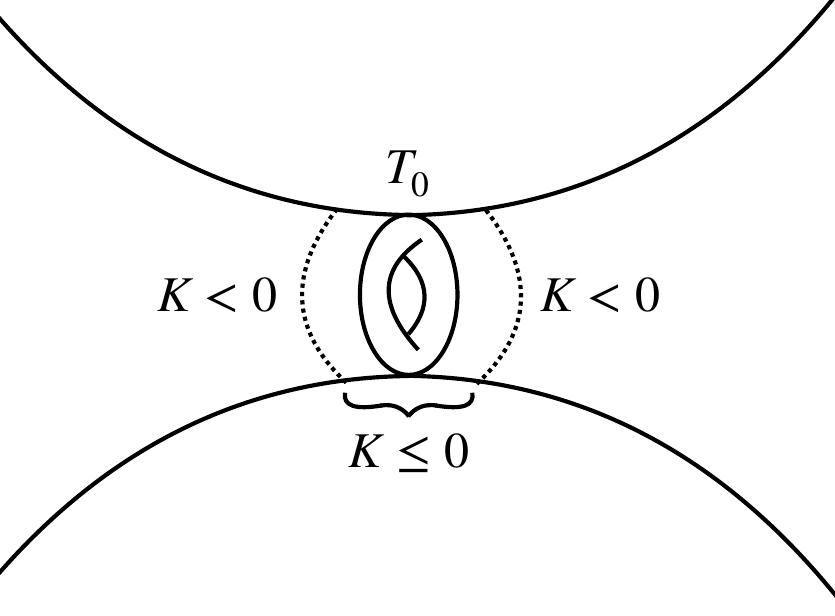}\label{fig:type1-wo-fs}
}\qquad{}\subfloat[With a flat strip]{\includegraphics[scale=0.38]{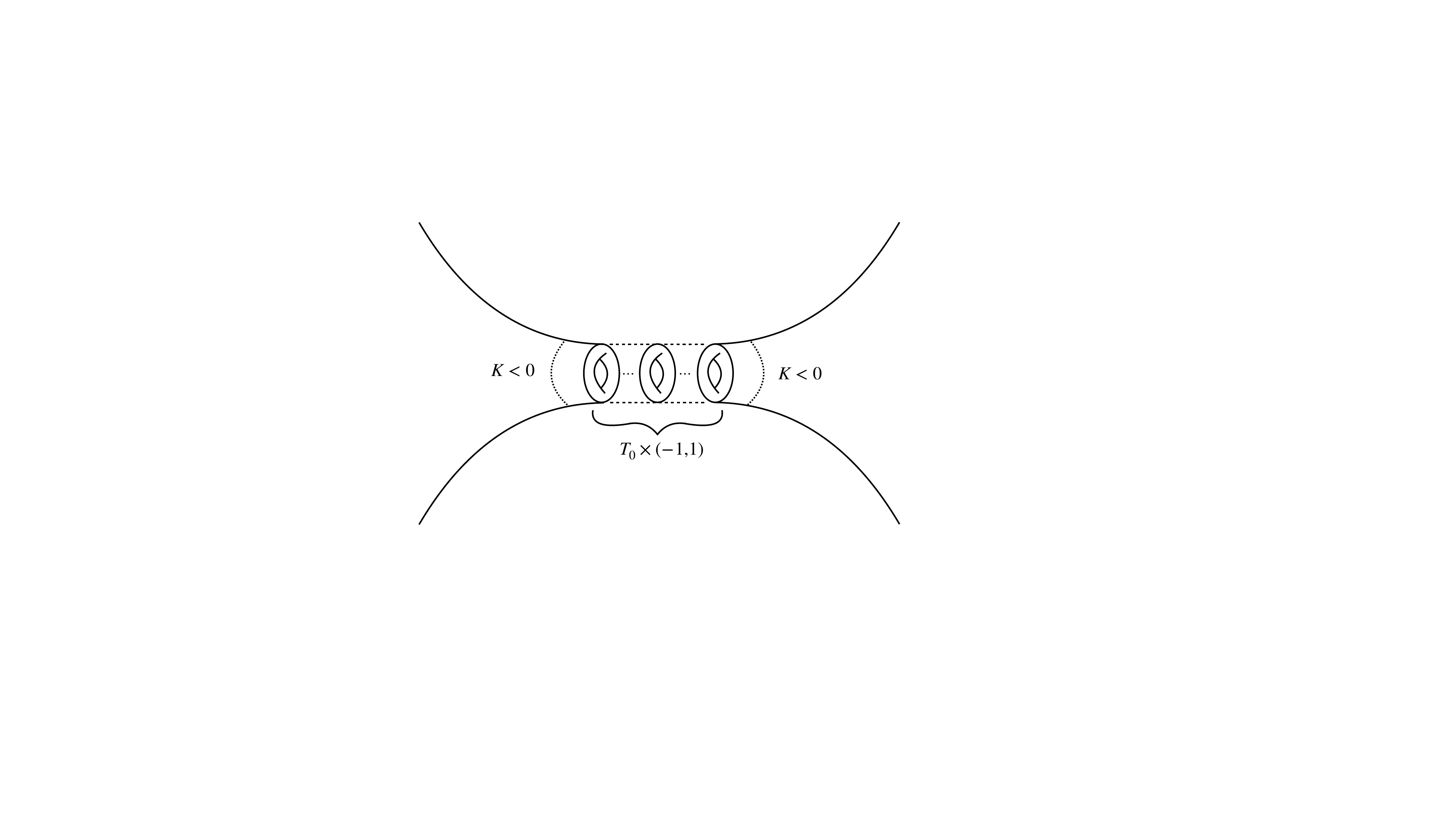}\label{fig:type1-w-fs}
}
\caption{Type 1 manifolds}
\label{fig:type1} 
\end{figure}

\begin{figure}[h]
\subfloat[No flat strips]{\includegraphics[scale=0.38]{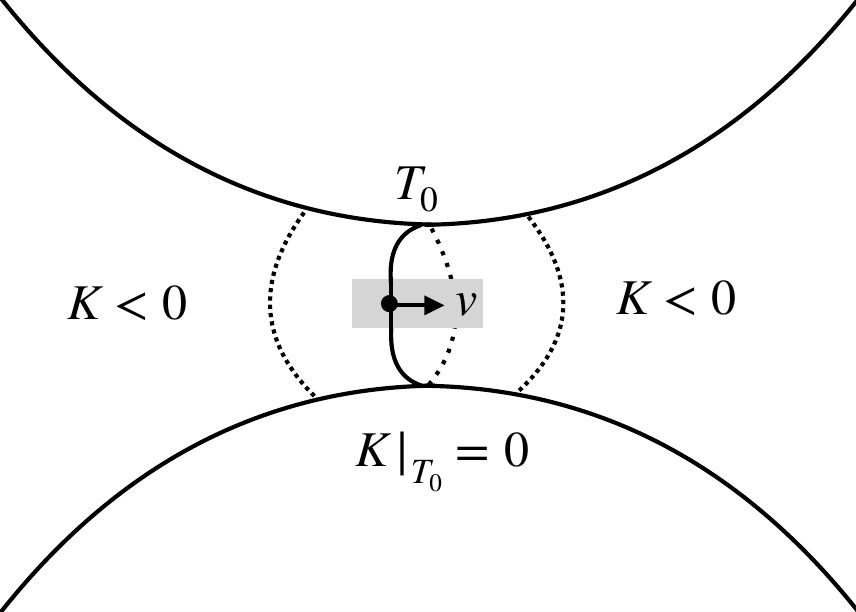}\label{fig:type2-wo-fs}
}\qquad{}\subfloat[With a flat strip]{\includegraphics[scale=0.38]{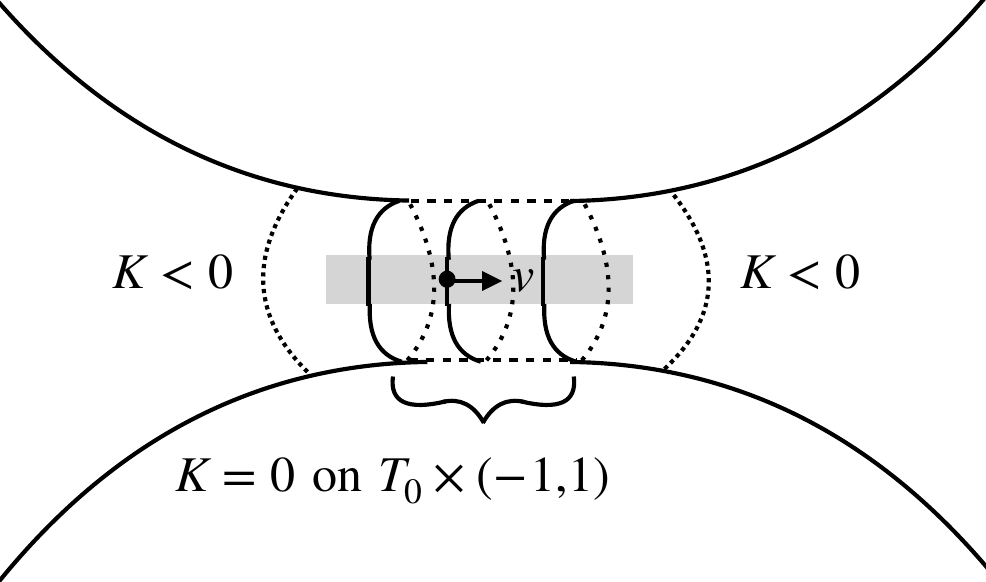}\label{fig:type2-w-fs}
}
\caption{Type 2 surfaces}
\label{fig:type2} 
\end{figure}

In this paper, in addition to conditions (C1)-(C3), we assume the following condition on the continuous potential $\varphi:T^{1}M\to\R$:
\begin{enumerate}
\item[(C4)] If $M$ is Type 1, then $\varphi$ is constant on $\mathrm{Sing}$. If $M$ is Type 2, then $\varphi$ is transversally constant on $\mathrm{Sing}$, meaning $\varphi$ depends only on the image of the projection $\mathrm{Sing} \to T^{1}T_{0}$.
\end{enumerate}
Note that when $M$ is Type 2, we do not assume that $\varphi$ is constant on $\mathrm{Sing}$.

Any potential function $\varphi:T^{1}M \to \mathbb{R}$ can be extended to $\varphi:TM \to \mathbb{R}$ via
\begin{equation}\label{eqn: rescaling_varphi}
\varphi(v) = |v| \varphi\left(\frac{v}{|v|}\right).
\end{equation}
With this extension, the integral of $\varphi$ along any curve is independent of parametrization.

\subsection*{General results for pressure gaps:}
We are now ready to state our first result on obtaining the pressure gap. We denote by $N_{R}(\mathrm{Sing})$ the radius $R$ neighborhood of $\mathrm{Sing}$ in $T^{1}M$ with respect to the Sasaki metric (see more details below).

\begin{thmx}[Pressure gap criterion]\label{Main-result}
Suppose $M$ is a manifold of Type 1 or 2 and $\varphi$ satisfy conditions \textup{(C1)-(C4)}, and there exist $R, C, \varepsilon_{1}, \varepsilon_{2} > 0$ such that
\begin{equation}
\varphi(v) - \varphi(ds(v))\geq -C\left(|x_{v}|^{\frac{m}{2} + \varepsilon_{1}} + |\phi_{v}|^{\frac{m}{m+2} + \varepsilon_{2}}\right)\label{eq:main-ineq}
\end{equation}
for any $v \in N_{R}(\mathrm{Sing})$. Then $P(\mathrm{Sing}, \varphi) < P(\varphi)$.
\end{thmx}

\textcolor{black}{In particular, any potential that is Hölder continuous with sufficiently large exponents at $\mathrm{Sing}$ satisfies \eqref{eq:main-ineq}. We also note that it is sufficient to have a lower bound on $\varphi(v) - \varphi(ds(v))$ near $\mathrm{Sing}$, since when $\varphi(v)$ is larger than $\varphi(ds(v))$, $\varphi$ accumulates more pressure outside the singular set. When $\varphi$ is locally constant, we achieve \cite[Theorem B]{burns2018unique} in our setup: }

\begin{cor*}[Locally constant potentials]
Suppose $M$ is Type 1 or Type 2, and $\varphi$ is locally constant in a neighborhood of $\mathrm{Sing}$, then $P(\varphi) > P(\mathrm{Sing}, \varphi)$. 
\end{cor*}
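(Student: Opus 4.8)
The plan is to obtain the corollary as an immediate instance of the generalized version of Theorem~\ref{Main-result} recorded in the remark above. The whole point is that for a locally constant potential the quantitative decay condition \eqref{eq:main-ineq} degenerates to the trivial inequality $0\ge 0$, so that only the structural hypotheses (1)--(4) of that generalized statement remain to be checked, and these come essentially for free.

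First I would fix an open neighborhood $U$ of $\Sing$ on which $\vp$ is locally constant, i.e.\ constant on each connected component of $U$. Each component $T^1\mathsf{S}_i$ of $\Sing$ is a connected submanifold of $T^1M$ --- the unit tangent bundle of a torus, or of a flat strip $T_i\times(-1,1)$, is connected --- so by local connectedness of $T^1M$ it is contained in a single connected component $U_i\subseteq U$, on which $\vp$ equals some constant; since $T^1\mathsf{S}_i\subseteq U_i$, that constant is $C_i:=\vp|_{T^1\mathsf{S}_i}$. This is hypothesis (2). Hypothesis (1) (that $\Sing=\coprod_{i=0}^{k}T^1\mathsf{S}_i$ with each $\mathsf{S}_i$ a codimension~1 totally geodesic flat subtorus or a flat strip containing one) and hypothesis (4) (type~1 or type~2 curvature bounds near $\mathsf{S}_0$) are part of the standing assumptions once the distinguished component is named.

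To name that component, note that the sets $T^1\mathsf{S}_i$ are pairwise disjoint, closed, and $\mathcal F$-invariant, so every ergodic $\mathcal F$-invariant probability measure supported on $\Sing$ is carried by exactly one of them; hence $P(\Sing,\vp)=\max_{0\le i\le k}P(\vp|_{T^1\mathsf{S}_i})$. Relabel so that this maximum is attained at $i=0$, which gives hypothesis (3). Finally, shrink $R>0$ so that $N_R(T^1\mathsf{S}_0)\subseteq U_0$; then $\vp\equiv C_0$ on $N_R(T^1\mathsf{S}_0)$, and for every $v\in N_R(T^1\mathsf{S}_0)$ and any constants $C,\varepsilon_1,\varepsilon_2>0$,
\[
\vp(v)-C_0=0\ \ge\ -C\left(|x_v|^{\frac{m}{2}+\varepsilon_1}+|\phi_v|^{\frac{m}{m+2}+\varepsilon_2}\right),
\]
so \eqref{eq:main-ineq} holds. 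The generalized Theorem~\ref{Main-result} then yields $P(\Sing,\vp)<P(\vp)$.

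I do not expect a genuine obstacle: the argument is just a verification that the hypotheses of Theorem~\ref{Main-result} are satisfied. The only two points deserving a line of care are the connectedness observation used to upgrade ``$\vp$ locally constant near $\Sing$'' to ``$\vp$ constant on a connected neighborhood of each $T^1\mathsf{S}_i$'', and the reduction $P(\Sing,\vp)=\max_i P(\vp|_{T^1\mathsf{S}_i})$ via ergodic decomposition --- both routine.
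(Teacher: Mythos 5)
Your derivation is exactly the paper's: the paper proves this corollary only by remarking that a locally constant potential makes the decay condition \eqref{eq:main-ineq} trivial, so the generalized version of Theorem \ref{Main-result} applies, and your verification of the remaining hypotheses (constancy on each singular piece, reduction $P(\Sing,\vp)=\max_i P(\vp|_{T^1\mathsf{S}_i})$ via ergodic decomposition, choice of the maximal component, trivial inequality near it) is the intended argument. The one inaccurate detail is the claim that each $T^{1}\mathsf{S}_i$ is connected: when $M$ is a surface without a flat strip (e.g.\ the surface-of-revolution prototype), $T_0$ is a closed geodesic and $T^{1}T_{0}$ consists of two circles, on which a locally constant $\vp$ may take two different constants; this is repaired by running your same argument on the connected components of $\Sing$ rather than on the sets $T^{1}\mathsf{S}_i$, since constancy and the trivial bound are only needed in a neighborhood of the component realizing $P(\Sing,\vp)$.
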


\begin{rem}
We briefly discuss the improvements and differences between Theorem \ref{Main-result} and \cite[Theorem B]{burns2018unique}:
\begin{enumerate}
\item We do not assume that $\varphi$ is constant on $\mathrm{Sing}$ when $M$ is Type 2.
\item One key difference is the construction of shadowing orbits. \cite{burns2018unique} uses stable and unstable manifolds to construct orbit segments that shadow those contained in the singular set. This method applies to all nonpositively curved manifolds but provides less control. Our approach requires more precise estimates of the shadowing orbits. To achieve this, we use bouncing orbits (see Figure \ref{fig:vec}) to shadow singular orbits, with curvature bounds providing additional control. See Section \ref{subsec:Shadowing-map} for more details.
\item Another difference is that the topological entropy of our singular set is zero, allowing us to better estimate $P(\mathrm{Sing}, \varphi)$. This is the primary reason we can dispense with the locally constant assumption in \cite[Theorem B]{burns2018unique}. However, when $M$ has a flat strip, the transversally constant condition in Theorem \ref{Main-result} is necessary because increasing $\varphi$ in the middle of the strip could eliminate the pressure gap. See \cite[Section 10.1]{burns2018unique} for more details.
\end{enumerate}
\end{rem}

\begin{rem}
For brevity and readability, we assume $M$ contains only one flat torus $T_{0}$ or one flat strip. However, Theorem \ref{Main-result} holds when $\mathrm{Sing}$ is induced by finitely many codimension 1 totally geodesic flat tori or flat strips. Specifically, Theorem \ref{Main-result} is valid under the following assumptions:
\begin{enumerate}
\item $\mathrm{Sing} = \coprod_{i=0}^{k} \mathsf{S}_{i}$, where $\mathsf{S}_{i} = T^{1}T_{i}$ or $T^{1}T_{i} \times [-1,1]$ for a codimension 1 totally geodesic flat subtorus $T_{i}$.
\item Without loss of generality, we assume $P(\varphi|_{\mathsf{S}_{0}}) = \max\{P(\varphi|_{\mathsf{S}_{i}}) : i = 0, \ldots, k\} = P(\mathrm{Sing}, \varphi)$. We only need $\varphi$ to satisfy the assumptions of Theorem \ref{Main-result} near $\mathsf{S}_{0}$.
\item $M$ satisfies the curvature bounds near $T_{0}$ or $T_{0} \times [-1,1]$ as a Type 1 or Type 2 manifold.

we do not know which one
\end{enumerate}
\end{rem}

We say that a potential $\varphi$ has a \textit{phase transition} at $q_{0}$ if the pressure map $q \mapsto P(q\varphi)$ fails to be differentiable at $q_{0}$. It is well known that the uniqueness of equilibrium states implies the differentiability of the pressure map; see \cite{Ruelle1978Diff}.

Our second main result is that no phase transition appears if $\varphi$ decays rapidly near $\mathrm{Sing}$:

\begin{thmx}[No phase transition]\label{thm:B}
Let $M$ and $\varphi$ satisfy conditions \textup{(C1)-(C4)}, and let $\varphi$ be a Hölder continuous potential such that 
\begin{equation}
\left|\varphi(v) - \varphi(ds(v))\right| \leq C \left(|x_{v}|^{\frac{m}{2} + \varepsilon_{1}} + |\phi_{v}|^{\frac{m}{m+2} + \varepsilon_{2}}\right), \quad \forall\,v \in N_{R}(\mathrm{Sing}).\label{eq:main-eq-2}
\end{equation}
Then $q\varphi$ has a unique equilibrium state for each $q \in \mathbb{R}$; thus, $\varphi$ does not have phase transitions.
\end{thmx}

\begin{rem}
\begin{enumerate}
\item The argument for Theorem \ref{thm:B} remains valid even if $\varphi$ is only controlled from below, i.e., satisfying (\ref{eq:main-ineq}). In this case, the conclusion is that $q\varphi$ has a unique equilibrium state for $q > 0$.
\item Theorem \ref{thm:B} follows immediately from Theorem \ref{Main-result}.
\end{enumerate}
\end{rem}

%Under the same assumptions, applying Theorem \ref{Main-result} and
%\cite[Theorem A]{burns2018unique} we get

%\begin{thmx}[No phase transition]\label{thm:B}

%If $\vp$ is H\"older continuous potential satisfying the hypotheses in \textup{Theorem \ref{Main-result}},
%then $q\vp$ has a unique equilibrium state for each $q\in\mathbb{R}$; thus, $\vp$ has no phase transitions.

%\end{thmx}

The proof of Theorem \ref{Main-result} relies on an abstract pressure criterion (Proposition \ref{prop: abstract pressure gap}). For readability, we defer the detailed exposition of this abstract result to Section \ref{sec: pressure gap}. In essence, Proposition \ref{prop: abstract pressure gap} demonstrates that if the geodesic flow $\mathcal{F}$ satisfies the following conditions: (1) a ``strong'' specification property, (2) the singular set can be shadowed by nearby vectors, and (3) the potential $\varphi$ decays rapidly enough, then $\varphi$ exhibits a pressure gap.

\subsection*{Results for Geometric Potentials}

The second theme of this paper is dedicated to studying the behavior of the geometric potential $\varphi^{u}: T^{1}M \to \mathbb{R}$ near $T_{0}$. Recall that the geometric potential $\varphi^{u}$ is defined as
\[
\varphi^{u}(v) := -\lim_{t \to 0} \frac{1}{t} \log \det(df_{t} \mid_{E^{u}(v)})
\]
where $E^{u}(v)$ is the unstable subspace (see Section \ref{sec:Preliminary} for details).

For Type 2 surfaces without flat strips, Gerber and Niţică \cite[Theorem 3.1]{gerber1999ETDS} and Gerber and Wilkinson \cite[Lemma 3.3]{gerber1999holder} provided Hölder continuity estimates for the geometric potential $\varphi^{u}$ at $\mathrm{Sing}$. The following result shows that, under a natural Ricci curvature constraint, similar Hölder continuity estimates can be extended to higher-dimensional cases.

In what follows, we denote by $a(v) \approx b(v)$ near $S$, if there exists a neighborhood $N$ of $S$ and $C > 1$ such that $C^{-1}b(v) \leq a(v) \leq Cb(v)$ for $v \in N$. We say $M$ has order $m$ Ricci curvature bounds if the Ricci curvature $\mathrm{Ric}(v)$ vanishes uniformly to order $m-1$ over $T_{0}$ (see \eqref{eqn: ric_curv_bdd}).

\begin{thmx}[Geometric Potentials] \label{Main-result-2}
Let $M$ be a Type 1 manifold without flat strips. Suppose $M$ has an order $m$ Ricci curvature bound, then near $\mathrm{Sing}$ we have
\[
-\varphi^{u}(v) \approx |x_{v}|^{\frac{m}{2}} + |\phi_{v}|^{\frac{m}{m+2}}.
\]
\end{thmx}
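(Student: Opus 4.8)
The plan is to reduce the statement to a comparison between the geometric potential $\vp^u$ and a model quadratic form governed by the normal curvature $K_\perp$, and then to integrate the associated Riccati equation along bouncing geodesics near $T_0$. First I would recall that $-\vp^u(v) = \mathrm{tr}\, U(v)$, where $U(v)$ is the second fundamental form of the unstable horosphere through $v$; this symmetric operator solves the matrix Riccati equation $U' + U^2 + R(v) = 0$ along the geodesic determined by $v$, where $R(v)$ is the curvature operator. The order $m$ Ricci curvature bound gives $\mathrm{Ric}(v)\approx$ (model decay) on $T_0$, which controls $\mathrm{tr}\, R(v)$, and by nonpositivity of curvature $U\geq 0$, so $\mathrm{tr}\, U$ and $\mathrm{tr}\, U^2$ are comparable up to dimensional constants: $\tfrac{1}{n-1}(\mathrm{tr}\, U)^2 \leq \mathrm{tr}\, U^2 \leq (\mathrm{tr}\, U)^2$. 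Thus, tracing the Riccati equation, $(\mathrm{tr}\, U)' + (\mathrm{tr}\, U)^2 \approx -\mathrm{Ric}$, reducing the problem to a scalar Riccati comparison.

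Next I would carry out the scalar estimate. The key geometric input, already available from the Fermi-coordinate analysis in Section \ref{sec:codim1} and the bouncing-orbit construction, is a precise description of how a geodesic through $v$ with parameters $(x_v,\phi_v)$ behaves near $T_0$: for $|x_v|$ and $|\phi_v|$ small the geodesic makes an excursion near $T_0$ whose ``time spent in the curved region'' and ``minimal distance to $T_0$'' are expressible in terms of $(x_v,\phi_v)$ — roughly, a geodesic starting at height $|x_v|$ with angle $|\phi_v|$ reaches height comparable to $\max(|x_v|, |\phi_v|^{2/(m+2)})$ and spends time comparable to $|\phi_v|^{m/(m+2)}$ (or, if $\phi_v$ is negligible, time governed purely by $|x_v|$) in the region where $-\mathrm{Ric}$ is bounded below. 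Plugging the curvature decay rate $\sim (\text{height})^{m-1}$ into the forward and backward solutions of $w' + w^2 = -\mathrm{Ric}$ and matching at the time of closest approach, one finds that the stationary (horospherical) solution $\mathrm{tr}\, U(v)$ is comparable to the integral of the curvature over the excursion, which evaluates to $|x_v|^{m/2} + |\phi_v|^{m/(m+2)}$. The lower bound requires showing $\mathrm{tr}\, U(v)$ cannot be too small, which follows because $U\geq 0$ forces $\mathrm{tr}\, U$ to dominate $\int_0^\infty e^{-2\int \mathrm{tr}\, U}(-\mathrm{Ric})$-type quantities; the upper bound follows from a Gronwall/barrier argument using the stable solution as a super-solution.

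I would then handle the two regimes separately and glue: (i) when $|\phi_v|^{2/(m+2)} \lesssim |x_v|$, the geodesic stays at height $\approx |x_v|$ over a time $\approx |x_v|$ (bounded away from $T_0$ in the relevant scale), giving the $|x_v|^{m/2}$ term; (ii) when $|x_v| \lesssim |\phi_v|^{2/(m+2)}$, the geodesic dives to height $\approx |\phi_v|^{2/(m+2)}$ and the excursion time $\approx |\phi_v|^{m/(m+2)}$ produces the $|\phi_v|^{m/(m+2)}$ term; the absence of flat strips is what guarantees every such geodesic actually leaves $T_0$ so these estimates are non-degenerate, and the type 1 uniform curvature bound makes the constants uniform in the directions tangent to $T_0$. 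The main obstacle I anticipate is the lower bound in regime (ii): controlling $\mathrm{tr}\, U(v)$ from below requires that the unstable Jacobi fields genuinely ``feel'' the curved region during the short excursion, which means quantifying how the stationary Riccati solution responds to a curvature bump of height $\sim (\text{height})^{m-1}$ and width $\sim |\phi_v|^{m/(m+2)}$ — a competition between a small bump and a short time — and showing the response is exactly of the claimed order rather than lower order. This needs the sharp two-sided excursion estimates from the Fermi-coordinate computation and careful bookkeeping of the matching constants across the two time-halves of the excursion.
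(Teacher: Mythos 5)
Your overall route is the same as the paper's: identify the size of the unstable shape operator via its trace, reduce the matrix Riccati equation to a scalar one using $\tfrac{1}{n-1}(\mathrm{tr}\,U)^{2}\le\mathrm{tr}\,U^{2}\le(\mathrm{tr}\,U)^{2}$, and then feed the Fermi-coordinate excursion analysis of Section \ref{sec: higher dim} into a Riccati comparison, splitting into the bouncing/asymptotic and crossing regimes. However, two of your steps are wrong as stated. First, $-\vp^{u}(v)$ is \emph{not} equal to $\mathrm{tr}\,U(v)$ pointwise: that trace is $-\psi^{u}(v)$, and $\vp^{u}$ differs from $\psi^{u}$ by $\tfrac{d}{dt}\big|_{t=0}\log\det\left(d\pi_{f_{t}v}|_{E^{u}}\right)$ (they agree only after integration against invariant measures). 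Since the theorem is a pointwise two-sided bound on $\vp^{u}$, you need the bridging estimate $|\vp^{u}-\psi^{u}|\le-\psi^{u}\left(\left(\psi^{u}\right)^{2}-\mathrm{Ric}\right)$ (Proposition \ref{prop:comparability}) to see that the correction is of higher order near $\Sing$; without it your argument only estimates $\psi^{u}$.

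Second, and more seriously, your excursion time scales are inverted, and taken literally they cannot produce the claimed exponents. The mechanism is that a geodesic whose relevant height scale is $h$ (with $h\approx|x_{v}|$ in the bouncing regime and $h\approx|\phi_{v}|^{2/(m+2)}$ in the crossing regime) spends backward time at least of order $h^{-m/2}$ in the band where $-\mathrm{Ric}\approx h^{m}$; this is exactly Lemma \ref{lem: estim_Tv} ($T_{v}\gtrsim x(0)^{-m/2}$) and, in the crossing case, the estimate $T_{v}-t_{v}\gtrsim A^{-m/(m+2)}$ with $A\approx|\phi_{v}|$. That time exceeds the Riccati relaxation time $(\sqrt{-\mathrm{Ric}})^{-1}\approx h^{-m/2}$, so the solution saturates at $\sqrt{-\mathrm{Ric}}\approx h^{m/2}$ (Lemma \ref{lem: est_trace_ric_curv}(1)). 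With the times you wrote (time $\approx|x_{v}|$ at height $|x_{v}|$, excursion time $\approx|\phi_{v}|^{m/(m+2)}$), a curvature bump of size $h^{m}$ acting for so short a time would only yield a response of order $h^{m}\cdot(\text{time})$, which is of strictly higher order than $h^{m/2}$ and contradicts the answer you assert. You also need the forward-propagation step: in the crossing case the saturated value is attained at time $-t_{v}$, and one must carry it to $v$ over a stretch of length $t_{v}\lesssim A^{-m/(m+2)}$ where only an upper curvature bound holds, losing only a bounded factor (Lemma \ref{lem: est_trace_ric_curv}(2)); and in the bouncing regime the gluing works not through a maximum of heights but because $|\sin\phi_{v}|\lesssim x_{v}^{(m+2)/2}$ forces $|\phi_{v}|^{m/(m+2)}\lesssim|x_{v}|^{m/2}$, so the angle term is dominated there. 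These quantitative points are precisely the core of the proof, so the proposal as written has a genuine gap even though its skeleton is correct.
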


The no-flat-strip condition is necessary for the Hölder continuity. See Remark \ref{rem: no_flat_strip_cond} for more details.

\begin{rem}
In general, radial curvature and Ricci curvature have no strong relationship. Only in the surface case are these two curvatures the same. Nevertheless, in the appendix, we show that if the Riemannian metric is a warped product, then the Ricci curvature bound and the radial curvature bound hypotheses are equivalent.
\end{rem}

In most cases, the Hölder continuity of geometric potentials is unknown for nonuniformly hyperbolic systems, especially in higher dimensions. As an immediate consequence of Theorem \ref{Main-result-2}, we have the following partial result for higher-dimensional manifolds:

\begin{thmx}[Local Hölder Continuity] \label{Main-result-3}
Under the same assumptions as Theorem \ref{Main-result-2}, the geometric potential $\varphi^{u}$ is Hölder continuous at $\mathrm{Sing}$.
\end{thmx}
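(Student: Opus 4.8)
The plan is to deduce Theorem~\ref{Main-result-3} from Theorem~\ref{Main-result-2} by a soft ``decay plus increment'' argument. H\"older continuity near $\Sing$ only asks for a H\"older bound on a single fixed neighborhood of $\Sing$, and $\varphi^u$ is in any case continuous on $T^1M$, so it suffices to work inside one small Fermi-coordinate chart about $\Sing$. Because $M$ has no flat strips, $\Sing = T^1 T_0$ is exactly the locus $\{x_v = 0,\ \phi_v = 0\}$ in these coordinates; writing a point as $v = (x_v,\phi_v,s_v)$ with $s_v$ the remaining $(2n-3)$ coordinates along $\Sing$, one has $\mathrm{dist}(v,\Sing) \asymp |x_v| + |\phi_v|$. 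Evaluated on $\Sing$, Theorem~\ref{Main-result-2} already gives $\varphi^u|_{\Sing} = 0$; at nearby points it gives the decay bound $|\varphi^u(v)| \le C\big(|x_v|^{m/2} + |\phi_v|^{m/(m+2)}\big)$.

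The theorem will follow from one further estimate, an \emph{anisotropic Lipschitz bound} adapted to the Fermi coordinates: for $v,w$ in a neighborhood of $\Sing$,
\begin{equation*}
|\varphi^u(v) - \varphi^u(w)| \ \le\ C\Big(|x_v - x_w|^{m/2} + |\phi_v - \phi_w|^{\frac{m}{m+2}} + |s_v - s_w|\Big).
\end{equation*}
Indeed, set $\theta := \tfrac{m}{m+2}$; since $\theta \le \min\{1,m/2\}$ and the region is bounded, each term on the right is $\le C'|v-w|^{\theta}$, so $|\varphi^u(v) - \varphi^u(w)| \le 3C'|v-w|^{\theta}$ on that neighborhood, which is the assertion. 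The anisotropic bound in turn reduces, by moving from $v$ to $w$ one coordinate block at a time and applying the triangle inequality, to controlling $\varphi^u$ along a single coordinate line at a time: on a line not meeting $\Sing$ one needs a finite-increment estimate that is H\"older of exponent $m/2$, $\tfrac{m}{m+2}$, or $1$ in the displacement, according to the direction moved, while on a line through $\Sing$ one splits at the intersection point and uses $\varphi^u|_{\Sing}=0$ together with the decay bound (e.g.\ $|\varphi^u(x_v,0,s) - \varphi^u(x_w,0,s)| \le C(|x_v|^{m/2} + |x_w|^{m/2}) \le 2C|x_v-x_w|^{m/2}$ when $x_v,x_w$ have opposite signs). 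One cannot simply differentiate $\varphi^u$ here: the curvature pinching degenerates on approach to $\Sing$, so $\varphi^u$ need not be $C^1$ near $\Sing$, and everything is kept at the level of finite increments.

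The one substantive step, and the main obstacle, is the coordinate-line increment estimate, because Theorem~\ref{Main-result-2} as stated is a two-sided \emph{comparison} only, and comparison is genuinely too weak: a continuous function comparable to $|x_v|^{m/2} + |\phi_v|^{\theta}$ and smooth off $\Sing$ can oscillate so as to be H\"older with no positive exponent. So one must revisit, and differentiate, the proof of Theorem~\ref{Main-result-2}. Writing $\varphi^u(v) = -\operatorname{tr} U(v)$ with $U(v)$ the unstable solution of the matrix Riccati equation $U' + U^2 + \mathcal R_{\gamma_v} = 0$ along $\gamma_v$, the difference $U(v) - U(w)$ of the solutions along two nearby geodesics satisfies --- after synchronizing the geodesics through the chart --- a linear equation whose forcing is of size $\lesssim |v-w|\,\|\nabla\mathcal R\|$ and which is damped by the forward contraction of Riccati solutions, at a rate comparable near $T_0$ to $\operatorname{tr} U(v) \asymp |x_v|^{m/2} + |\phi_v|^{\theta}$. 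The weak damping close to $T_0$ is exactly what produces singular prefactors $|x_v|^{m/2 - 1}$, $|\phi_v|^{\theta - 1}$ on coordinate lines entering the degenerate region, which upon integrating along such lines give back the clean powers in the display; conversely, the blow-up in one transverse coordinate is cut off once the complementary one is bounded away from $0$, since then $\gamma_v$ never visits the degenerate region and $\varphi^u$ is smooth with bounded derivatives there. Carrying the Fermi-coordinate ``bouncing orbit'' estimates and the order-$m$ Ricci bound that underlie Theorem~\ref{Main-result-2} through this linear equation --- keeping track in particular of the near-$T_0$ time scale $\asymp |\phi_v|^{-2/(m+2)}$ --- is where essentially all of the work lies; the remaining summation is elementary.
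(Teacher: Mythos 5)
The paper's own proof of this theorem is essentially a one-line deduction, resting on a softer reading of ``H\"older continuous near $\Sing$'' than the one you adopt: since $M$ has no flat strips, $\Sing=T^1T_0$ and $\varphi^u|_{\Sing}=0$, so Theorem \ref{Main-result-2} gives, for $v$ near $\Sing$ and any $\mathsf{v}\in\Sing$, $|\varphi^u(v)-\varphi^u(\mathsf{v})|=|\varphi^u(v)|\leq C\bigl(|x_v|^{m/2}+|\phi_v|^{m/(m+2)}\bigr)\leq C'\,d(v,\Sing)^{m/(m+2)}\leq C'\,d(v,\mathsf{v})^{m/(m+2)}$. This H\"older estimate at the singular set is what the paper asserts and uses, which is why Theorem \ref{Main-result-3} (and the corollary at the end of Section \ref{sec:geo_potentials}) is presented as an immediate consequence of Theorem \ref{Main-result-2}, with no further argument. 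Your observation that a two-sided comparison $-\varphi^u\approx|x_v|^{m/2}+|\phi_v|^{m/(m+2)}$ cannot by itself yield a two-point H\"older bound on a full neighborhood is correct as a matter of logic (your oscillation example is the right kind of counterexample), but it means you are aiming at a strictly stronger statement than the one the paper actually derives from Theorem \ref{Main-result-2}.

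Measured against either reading, your proposal has a genuine gap: its entire content is the anisotropic increment estimate, which you reduce to a linearized Riccati analysis and then explicitly leave as ``where essentially all of the work lies.'' That step is not a routine Gronwall argument. The operator $U^u_v$ is the unstable Riccati solution, determined by a limit of solutions pushed from time $-\infty$, so the difference $U(v)-U(w)$ along two nearby geodesics is not governed by a local linear equation with forcing of size $|v-w|\,\|\nabla\mathcal{R}\|$ over a bounded time window; one must control how long the two geodesics stay close in backward time (which degenerates near $T_0$ at exactly the scales $|x|^{-m/2}$, $|\phi|^{-2/(m+2)}$ you mention) and show that the Riccati damping, itself degenerating like $|x|^{m/2}+|\phi|^{m/(m+2)}$, dominates the accumulated forcing; asserting the singular prefactors $|x_v|^{m/2-1}$, $|\phi_v|^{\theta-1}$ and ``integrating along coordinate lines'' does not establish them. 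This is precisely the hard content of Gerber--Wilkinson's H\"older continuity result for type 2 surfaces, which is substantially more work than the comparability lemma, and it is not reproduced here. A further soft spot: your claim that $\varphi^u$ has bounded derivatives once one transverse coordinate is bounded away from zero is not obvious, since such vectors are crossing vectors whose geodesics still traverse the degenerate region near $T_0$, so derivative bounds there require the same quantitative analysis. In short: for the statement as the paper interprets and proves it, the soft deduction above suffices and your machinery is unnecessary; for the stronger two-point interpretation you target, your strategy is plausible but its key estimate is left unproven.
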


We note that our method, inspired by \cite{gerber1999holder}, currently only establishes the Hölder continuity of the geometric potential $\varphi^{u}$ at $\mathrm{Sing}$. Achieving global Hölder continuity of $\varphi^{u}$ may require the Hölder continuity of the unstable Jacobian tensor $U^{u}$, which is still unclear in higher-dimensional cases.

On the other hand, as a consequence of Theorem \ref{Main-result-2}, we know that $\varphi^{u}$ is a borderline case of the pressure gap criterion given in Theorem \ref{Main-result} (i.e., $\varepsilon_{1} = \varepsilon_{2} = 0$). Moreover, it is known that for manifolds (including surfaces) whose singular sets are unit tangent bundles of flat, totally geodesic codimension 1 tori, the geometric potential $\varphi^{u}$ exhibits a phase transition at $q = 1$ (see \cite[p. 530]{burns2021phase} and \cite[Theorem C]{burns2018unique}).

In other words, Theorem \ref{Main-result-2} shows that the pressure gap criterion given in Theorem \ref{Main-result} is optimal in the sense that there are examples at the boundary of our criterion that do not have pressure gaps (see Figure \ref{fig:boundary}).

\begin{figure}[h]
\includegraphics[scale=0.8]{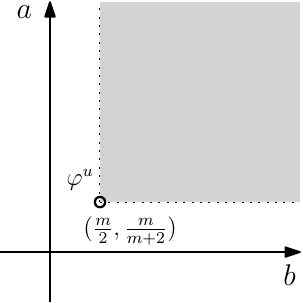}
\caption{Region where potentials $|\varphi(v)| \approx |x_{v}|^{a} + |\phi_{v}|^{b}$ exhibit a pressure gap and no phase transitions.}
\label{fig:boundary}
\end{figure}

In Figure \ref{fig:boundary}, each point corresponds to potentials $\varphi$ satisfying $-\varphi(v) \approx |x_{v}|^{a} + |\phi_{v}|^{b}$ near $\mathrm{Sing}$. The shaded region represents potentials that have a pressure gap and no phase transitions by Theorem \ref{Main-result}. The geometric potential $\varphi^{u}$ lies at the vertex $(\frac{m}{2}, \frac{m}{m+2})$ of the shaded region.

We conclude this subsection by posing an open question:

\begin{question*}
Suppose $(a, b)$ lies on the boundary of the shaded region in Figure \ref{fig:boundary}. Is there a potential $\varphi$ satisfying $|\varphi(v)| \approx |x_{v}|^{a} + |\phi_{v}|^{b}$ near $\mathrm{Sing}$ such that $\varphi$ has a phase transition?
\end{question*}

\subsection*{Examples: \label{examples}}

The prototype of Type 1 manifolds is the surface of revolution with profile \( f(x) = 1 + |x|^{r} \), which is the main example discussed in Lima, Matheus, and Melbourne \cite{lima2021polydecay}. For higher dimensions, an important example is the Heintze example (see Ballman, Brin, and Eberlein \cite{ballman1985nonpositive} or \cite[Section 10.2]{burns2018unique}). The simplest version of the Heintze example starts with a finite volume hyperbolic 3-manifold with one cusp, then removes the cusp and flattens the region near the cross-section. Recall that the cross-section of the cusp is a codimension 1 totally geodesic flat torus. The Heintze example is obtained by gluing two identical copies of the above 3-manifold along the cross-section (see Figure \ref{fig:type1}).

Type 2 surfaces (see Figure \ref{fig:type2}) were introduced in Gerber and Niţică \cite{gerber1999ETDS} and Gerber and Wilkinson \cite{gerber1999holder}. The archetype is a rank 1 nonpositively curved surface with an analytic metric. In such cases, it is well-known that $\mathrm{Sing}$ consists of unit tangent bundles of finitely many closed geodesics (see \cite[Section 10.1]{burns2018unique} for a sketched proof).

We remark that for nonpositively curved surfaces, Coudène and Schapira \cite[Theorem 3.2]{Coudene2014} (inspired by the unpublished work of Cao and Xavier \cite{cao2008flatstrip}) showed that flat strips for nonpositively curved surfaces close up. Moreover, Constantine, Lafont, McReynolds, and Thompson \cite[Theorem B]{MR3961216} establish a similar result in higher dimensions. However, in general, it is unknown if the singular geodesics or the (higher dimensional) zero curvature strips close up. Nevertheless, in all known examples, to the best of the authors' knowledge, the singular sets do close up, leading to our hypothesis on the existence of $T_{0}$.

\subsection*{Historical Remarks}

There is no singular set when the dynamical system is uniformly hyperbolic. Hence, the pressure gap persists for a broad class of potentials, allowing one to derive ergodic properties for associated equilibrium states. The origin of this fact traces back to the work of Bowen \cite{bowen1974some} for maps and Franco \cite{franco1977flows} for flows. For nonuniformly hyperbolic systems, Climenhaga and Thompson \cite{Climenhaga2016Uniqueness} proposed using the pressure gap as a condition to obtain ergodic properties of equilibrium states, particularly uniqueness, similar to uniform hyperbolic cases.

\textcolor{black}{Burns et al. \cite{burns2018unique} applied the argument from \cite{Climenhaga2016Uniqueness} and derived a necessary condition for the pressure gap.} Specifically, they showed that for closed rank 1 nonpositively curved manifolds, if $\varphi$ is locally constant near $\mathrm{Sing}$, then $\varphi$ has a pressure gap. This work was inspired by Knieper \cite{knieper1998uniqueness}, where the entropy gap $h_{\mathrm{top}}(\mathrm{Sing}) < h_{\mathrm{top}}(\mathcal{F})$ was established as a consequence of the uniqueness of the measure of maximal entropy. Recall that the topological entropy $h_{\mathrm{top}}(\mathcal{F})$ is the pressure of the zero potential.

\textcolor{black}{Gelfert and Schapira \cite{Gelfert:2014hn} compared different notions of pressure for closed rank 1 nonpositively curved manifolds, such as topological pressure, Gurevich pressure (or periodic orbit pressure), and their restrictions on singular and regular sets. They pointed out that under certain conditions, these different notions of pressure are identical. Similar discussions can be found in \cite[Propositions 2.8 and 6.4]{burns2018unique}.}

In geometry, the Liouville measure is an equilibrium state for the geometric potential. Ergodic properties of equilibrium states have been extensively studied. Several recent contributions have employed the Climenhaga-Thompson strategy (see \cite{Climengaha2021survey} for a survey on the strategy). For example, Chen, Kao, and Park \cite{chen2020unique,chen2021properties} worked on no focal points settings; Climenhaga, Knieper, and War focused on no conjugate points manifolds \cite{Climenhaga2021noconjugate}; the work of Call, Constantine, Erchenko,  Sawyer, and Work \cite{Call2021flatsurfaces} discussed flat surfaces with singularities.

There are many other relevant discussions on the ergodic properties of equilibrium states. \textcolor{black}{For example, uniqueness is addressed in \cite{Gelfert2019mme}; the Kolmogorov property in \cite{call2022K}; the Bernoulli property in \cite{Pesin1977Bernoulli,Burns1989Bernoulli,Ornstein1998Bernoulli,ledrappier2016Bernulli,call2022K,Lima2020Bernulli}; the central limit theorem in \cite{Thompson2021CLT,lima2021polydecay}; and the fact that there are at most countably many measures of maximal entropy for certain 3-dimensional flows in \cite{Lima-Sarig19-3d-symb}.}

For geometric potentials of rank 1 surfaces, Burns and Gelfert \cite{burns2014lyapunov} pointed out the existence of a phase transition. This was also confirmed in \cite{burns2018unique} using a different approach. A recent work by Burns, Buzzi, Fisher, and Sawyer \cite{burns2021phase} further investigated the edge case of $\varphi = q\varphi^{u}$ for $q = 1$. They showed that the Liouville measure is the only equilibrium state not supported on $\mathrm{Sing}$. However, the Hölder continuity of $\varphi^{u}$ is even less known. It is only assured by Gerber and Wilkinson \cite[Theorem I]{gerber1999holder} for Type 2 surfaces. Much less is known about $\varphi^{u}$ in higher dimensional cases.

\subsection*{Outline of the Paper}

In Section \ref{sec:Preliminary}, we recall some relevant background material from geometry and dynamics. In Section \ref{sec:Preparation}, we introduce the main shadowing technique and a key inequality to prove Theorem \ref{Main-result}. Sections \ref{sec: higher dim} and \ref{sec: GW surface} are devoted to technical estimates in Type 1 and Type 2 settings by analyzing the relevant Riccati equations. Section \ref{sec:geo_potentials} focuses on the geometric potential and the proof of Theorem \ref{Main-result-2}. The proof of Theorem \ref{Main-result} is presented in Section \ref{sec: pressure gap} as a consequence of a more general pressure gap criterion, Proposition \ref{prop: abstract pressure gap}. The proof draws inspiration from \cite[Theorem B]{burns2018unique}. However, our specific setup allows us to circumvent several technicalities and arrive at a more straightforward proof than that presented in \cite{burns2018unique}. In the appendix, we show that for warped product metrics, radial curvature bounds and Ricci curvature bounds are equivalent, and \textcolor{red}{we} provide a proof of an abstract result in ergodic theory, inspired by Peres' work, which we refer to as Peres' lemma for flows.

\subsection*{Acknowledgments}

\noindent The authors are grateful to Amie Wilkinson for proposing this question, to Jairo Bochi for bringing Peres' lemma to our attention,  and to Keith Burns, Vaughn Climenhaga, Todd Fisher, and Dan Thompson for enlightening discussions. We also thank the referees for their helpful comments in improving this work. Lastly, the authors dedicate this work to Todd Fisher for the inspiration he brought us in his too-short but luminous life.

\section{Preliminary\label{sec:Preliminary}}

\subsection{Geometry of nonpositively curved manifolds\label{subsec:Geometry-of-nonpositively}}

This subsection will survey relevant geometric features of
nonpositively curved manifolds. A more comprehensive survey of these
results can be found in \cite{ballmann1995lectures,eberlein2001geodesic}.

Let $M$ be a closed nonpositively curved manifold, and $\{f_{t}\}_{t\in\R}$
the geodesic flow on the unit tangent bundle $T^{1}M$. The tangent
bundle $TT^{1}M$ contains three $df_{t}$-invariant continuous bundles
$E^{s},E^{u}$, and $E^{c}$. The bundle $E^{c}$ is one-dimensional
along the flow direction, and the other two bundles $E^{s/u}$, which
are orthogonal to $E^{c}$ with respect to the Sasaki metric, can
be described using Jacobi fields. If $M$ is negatively curved,
these three bundles form a splitting of $TT^{1}M$.

A \textit{Jacobi field} $J$ along a geodesic $\gamma$ is a vector
field along $\gamma$ satisfying the \textit{Jacobi equation} 
\[
J''(t)+R(J(t),\gamma'(t))\gamma'(t)=0
\]
where $R$ is the Riemannian curvature tensor. A Jacobi field $J$ is
\textit{orthogonal} if there exists $t_{0}\in\R$ such that $J(t_{0})$
and $J'(t_0)$ are perpendicular to $\gamma'(t_{0})$. It is well known
that when this orthogonal property holds at some $t_{0}\in\R$, then
it holds for all $t\in\R$. A Jacobi field $J$ is \textit{parallel}
if $J'(t)=0$ for all $t\in\R$.

Denoting the space of orthogonal Jacobi fields along $\gamma$ by
$\J^{\perp}(\gamma)$, we can identify $T_{v}T^{1}M$ with $\J^{\perp}(\gamma_{v})$
as follows. Consider a vector $v\in T_{p}M$. Using the Levi-Civita
connection the tangent space $T_{v}TM$ at $v$ may be identified
with the direct sum $H_{v}\oplus V_{v}$ of horizontal and vertical
subspace, respectively. The \textit{Sasaki metric} on \( TM \) is defined by declaring \( H_v \) and \( V_v \) to be orthogonal, with both \( H_v \) and \( V_v \) being isomorphic to \( T_pM \) and equipped with the norm induced by the Riemannian metric on \( M \). Restricted to $T^{1}M$, the tangent space $T_{v}T^{1}M$
corresponds to $H_{v}\oplus v^{\perp}$ under this identification.
Then any vector $\xi\in T_{v}T^{1}M$ for $v\in T^{1}M$ may be written
as $(\xi_{h},\xi_{v})$ and can be identified with an orthogonal Jacobi
field $J_{\xi}\in\J^{\perp}(\gamma_{v})$ along $\gamma_{v}$ with
the initial conditions $(J_{\xi}(0),J'_{\xi}(0))=(\xi_{h},\xi_{v})$.
Moreover, the Sasaki norm of $df_{t}(\xi)$ satisfies 
\[
\|df_{t}(\xi)\|^{2}=\|J_{\xi}(t)\|^{2}+\|J'_{\xi}(t)\|^{2}.
\]

The stable subspace $E^{s}(v)$ at $v\in T^{1}M$ is then defined
as 
\[
E^{s}(v):=\{\xi\in T_{v}T^{1}M\colon\|J_{\xi}(t)\|\text{ is bounded for }t\geq0\}.
\]
Similarly, the unstable subspace $E^{u}(v)$ consists of vectors $\xi\in T_{v}T^{1}M$
where $\|J_{\xi}(t)\|$ is bounded for $t\leq0$. The subbundles $E^{u}(v)$ and $E^{s}(v)$ are integrable to the respective foliations $W^{u}(v)\subset T^{1}M$ and $W^{s}(v)\subset T^{1}M$.
The footprints of $W^{u}(v)$ and $W^{s}(v)$ on $M$ are called the
\textit{unstable }and \textit{stable} \textit{horospheres,} which
are denoted by $H^{u}(v)$ and $H^{s}(v)$, respectively.

The \textit{rank} of a vector $v\in T^{1}M$ is the dimension of the
space of parallel Jacobi fields along $\gamma_{v}$, which coincides
with the number $1+\dim(E^{s}\cap E^{u})$. We say the manifold is
\textit{rank 1} if it has at least one rank 1 vector. This paper focuses
mainly on closed rank 1 nonpositively curved manifolds.

The \textit{singular set} is a set of vectors on which the geodesic
flow fails to display uniform hyperbolicity, and it is defined by 
\[
\Sing:=\{v\in T^{1}M\colon E^{s}(v)\cap E^{u}(v)\neq0\}.
\]
The singular set is closed and $\F$-invariant, and in the case where
$M$ is a surface the singular set can be characterized as the set
of vectors $v$ where the Gaussian curvature $K(\gamma_{v}(t))$ vanishes
for all $t\in\R$. The complement of the singular set is the regular
set 
\[
\Reg:=T^{1}M\setminus\Sing.
\]

The geodesic flow restricted to the regular set is (nonuniformly) hyperbolic and
the degree of hyperbolicity can be measured by the function
\[
\lambda(v):=\min(\lambda^{u}(v),\lambda^{s}(v))
\]
where $\lambda^{u}(v)$ is the minimum eigenvalue of the shape operator
on the unstable horosphere $H^{u}(v)$ at $v$. Using $\lambda$ we
can define nested compact subsets $\{\Reg(\eta)\}_{\eta>0}$ of $\Reg$
where 
\[
\Reg(\eta):=\{v\in\Reg\colon\lambda(v)\geq\eta\}.
\]
These subsets may be viewed as uniformity blocks in the sense of Pesin's
theory, where the hyperbolicity is uniform. More details and properties
of the function $\lambda$ can be found in \cite{burns2018unique}.

The \textit{geometric potential} $\varphi^{u}:T^{1}M\to\R$ is an important potential that measures the infinitesimal volume growth
in the unstable direction: 
\[
\varphi^{u}(v):=-\lim_{t\to0}\frac{1}{t}\log\det(df_{t}\mid_{E^{u}(v)})=-\frac{d}{dt}\Big|_{t=0}\log\det(df_{t}\mid_{E^{u}(v)}).
\]

In order to study the geometric potential, it is convenient to study Riccati equations. Interestingly, the shape operator of unstable (and stable)
horosphere is a solution of a Riccati equation. To see this, we start
by introducing terminologies.

Let $H\subset M$ be a hypersurface orthogonal to $\gamma_{v}$ at
$\pi v$ where $\pi:T^{1}M\to M$ is the canonical projection. An
orthogonal Jacobi field $J\in\J^{\perp}(\gamma_{v})$ is called a
$H$-\textit{Jacobi field} along $\gamma_{v}$, if $J$ comes from
varying $\gamma_{v}$ through unit speed geodesics orthogonal to $H$.
We denote the set of $H$- Jacobi fields by $\J_{H}(\gamma_{v})$
. The \textit{shape operator} on $H$ is the symmetric linear operator
$U:T_{\pi v}H\to T_{\pi v}H$ defined by $U(w)=\nabla_{w}N$, where
$N$ is the unit normal vector field toward the same side as $v$.

We are particularly interested in the unstable horosphere $H=H^{u}(v)$
at $v$. In this case, $J_{H^{u}}(\gamma_{v})$ coincides with the
space of unstable Jacobi fields $J^{u}(\gamma_{v})$. For $t\in\R$,
let $U_{v}^{u}(t):T_{\pi f_{t}v}H^{u}(f_{t}v)\to T_{\pi f_{t}v}H^{u}(f_{t}v)$
be the shape operator of the unstable horosphere $H^{u}(f_{t}v)$.
We know $U_{v}^{u}(t)$ is a positive semidefinite symmetric linear
operator on $(f_{t}v)^{\perp}$, and for any unstable Jacobi field
$J(t)$ it satisfies $J'(t)=U_{v}^{u}(t)J(t)$; see \cite[Lemma 2.9]{burns2018unique}.

For any vector $v\in T^{1}M$, let $\mathcal{K}(v):v^{\perp}\to v^{\perp}$
is the symmetric linear map defined via $\langle\mathcal{K}(v)X,Y\rangle:=\langle R(X,v)v,Y\rangle$
for $X,Y\in v^{\perp}$. Using the Jacobi equation, for an unstable
Jacobi field $J(t)$ we know $J''(t)+K(f_{t}v)J(t)=0$ and $J'(t)=U_{v}^{u}(t)J(t)$.
Thus, we get the operator-valued Riccati equation: 
\begin{equation}
(U_{v}^{u})'(t)+U_{v}^{u}(t)^{2}+{\cal K}(\dot{\gamma}_{v}(t))=0;\label{eq:operator-Riccati}
\end{equation}
see \cite[(7.6)]{burns2018unique}. Using the above notation, the
Ricci curvature ${\rm Ric}(v)$ at $v$ is defined as the trace of
the map ${\cal K}(v)$.

\subsection{Thermodynamic formalism\label{subsec:Thermodynamic-formalism}}

We now briefly survey relevant results in thermodynamic formalism.
The general notion of topological entropy and pressure described in
the following can be defined for an arbitrary flow $\F=\{f_{t}\}_{t\in\R}$
in a compact metric space $(X,d)$.

For any $t>0$, we define a metric $d_{t}$ on $X$ via 
\[
d_{t}(x,y):=\max\limits _{0\leq\tau\leq t}d(f_{\tau}x,f_{\tau}y),
\]
and the corresponding $\delta$-ball around $x\in X$ in $d_{t}$-metric
will be denoted by $B_{t}(x,\delta)$. We say a subset $E$ of $X$
is \textit{$(t,\delta)$-separated} if $d_{t}(x,y)\geq\delta$ for
distinct $x,y\in E$. Moreover, we will identify $(x,t)\in X\times[0,\infty)$
with the orbit segment of length $t$ starting at $x$.

Let $\vp\colon X\to\R$ be a continuous function on $X$, which we
often call a \textit{potential}. We define ${\displaystyle \Phi(x,t):=\int_{0}^{t}\vp(f_{\tau}x)\,d\tau}$
to be the integral of $\vp$ along an orbit segment $(x,t)$. For
any subset $\CC\in X\times[0,\infty)$, we let $\CC_{t}$ be the subset
of $\CC$ consisting of orbit segments of length $t$. We define 
\[
\Lambda(\CC,\vp,\delta,t)=\sup\Big\{\sum\limits _{x\in E}e^{\Phi(x,t)}\colon E\subset\CC_{t}\text{ is }(t,\delta)-\text{separated}\Big\}.
\]

The \textit{topological pressure} of $\vp$ on $\CC$ is then defined
by 
\[
P(\CC,\vp):=\lim\limits _{\delta\to0}\limsup\limits _{t\to\infty}\frac{1}{t}\log\Lambda(\CC,\vp,\delta,t).
\]
When $\CC$ is the entire orbit space $X\times[0,\infty)$, then we
denote it by $P(\vp)$ and call it the \textit{topological pressure}
of $\vp$. In the case where $\vp\equiv0$, the resulting pressure
$P(0)$ is called the \textit{topological entropy} of the flow $\F$
denoted by $h_{top}(\F)$.

Denoting by $\mathcal{M}(\F)$ the set of all $\F$-invariant measures
on $X$, the pressure $P(\vp)$ satisfies the \textit{variational
principle} 
\[
P(\vp)=\sup\limits _{\mu\in\mathcal{M}(\F)}\Big\{ h_{\mu}(\F)+\int\vp\,d\mu\Big\}
\]
where $h_{\mu}(\F)$ is the measure-theoretic entropy of $\mu$. Any
invariant measure $\mu\in\mathcal{M}(\F)$ attaining the supremum
is called an \textit{equilibrium state} for $\vp$. Likewise, any
invariant measure attaining the supremum when $\vp\equiv0$ is called
a \textit{measure of maximal entropy}.

\subsection{Codimension 1 totally geodesic flat torus and Fermi coordinates}

\label{sec:codim1}

Let $M$ be an $n$-dimensional closed rank 1 nonpositively curved
manifold and $T_{0}$ a totally geodesic $(n-1)$-torus in $M$ with
$K\equiv0$ on any $x\in T_{0}$. We further suppose that the complement
of $T_{0}$ is negatively curved and that curvature away from a small
neighborhood of $T_{0}$ admits a uniform upper bound strictly smaller
than 0. A more precise control of the curvature of the neighborhood
will be specified later, depending on the setting under consideration.

In what follows, we fix a fundamental domain in $\widetilde{M}$ the
universal covering of $M$ and (abusing the notation) continue denoting
the lifts of $p\in M$ and $v\in T_{p}M$ by $p$ and $v$, respectively.
Recall that the Fermi coordinate of $p$ is given by $(s,x)$ where
$s$ is an $(n-1)$-dimensional coordinate on $\widetilde{T}_{0}$
and $x$ measures the signed distance on $\widetilde{M}$ to $\widetilde{T}_{0}$.

For $p\in\widetilde{M}$ near $\widetilde{T}_{0}$, by $x(p)$ we
mean the $x$-coordinate of $p$. For any $v\in T_{p}\widetilde{M}$
with $p$ near $\widetilde{T}_{0}$, we define $x_{v}:=x(p)$ and
denote by $\phi_{v}$ the signed angle between $v$ and the hypersurface
$x=x(p)$; we adopt the convention that $\phi_{X}=\pi/2$ when $X=\partial/\partial x$ is the vertical vector field.
We also define 
\begin{equation}
x_{v}(\tau):=x(\gamma_{v}(\tau))\text{ and }\phi_{v}(\tau):=\phi(\gamma_{v}'(\tau)).\label{eq: notation}
\end{equation}
When there is no confusion, we may write $x_{v}$ and $\phi_{v}$
for $x_{v}(0)$ and $\phi_{v}(0)$, respectively.
\begin{rem}
\label{rem: Fermi coords} With respect to Fermi coordinates $(s,x)$,
the curve $s=\text{const.}$ is always a geodesic perpendicular to
$\widetilde{T}_{0}$, while $(s(t),x(t))$ with $x(t)\equiv x_{0}$
for some $x_{0}$ is not a geodesic unless $x_{0}=0$ and $s(t)$
is linear. 
\end{rem}

For $\ep>0$ small, the Riemannian metric near $\widetilde{T}_{0}$
can be written as 
\begin{equation}
g=dx^{2}+g_{x},\quad|x|\leq\ep\label{eqn: metric_HD}
\end{equation}
where $g_{x}$ is the Riemannian metric on $\widetilde{T}_{x}:=\widetilde{T}_{0}\times\{x\}$.
In particular, $g_{0}$ is the Euclidean metric on $\widetilde{T}_{0}$.

Recall that the \textit{second fundamental form on $\widetilde{T}_{x}$} is defined
via 
\[
\II(v,w):=\langle\nabla_{v}X,w\rangle,
\]
for any $v,w\in T_{(s,x)}\widetilde{T}_{x}$. The \textit{shape operator
$U(s,x):T_{(s,x)}\widetilde{T}_{x}\to T_{(s,x)}\widetilde{T}_{x}$}
is defined via 
\[
\II(v,w)=\langle U(s,x)v,w\rangle.
\]
As $\II$ is bilinear and symmetric, the shape operator $U(s,x)$
is diagonalizable. Its eigenvalues 
\[
\lambda_{1}(s,x)\leq\lambda_{2}(s,x)\leq\cdots\leq\lambda_{n-1}(s,x)
\]
are called \textit{principal curvatures at $(s,x)$}. When $s=0$, since $T_0$ is totally geodesic, we have $\lambda_{1}(0,x)=\lambda_{2}(0,x)=\cdots=\lambda_{n-1}(0,x)=0$.
Moreover, $U(s,x)$ satisfies the following Riccati equation:
\[U_s+U^2+K=0.\]
Since $K\leq 0$, we know that $U$ is positive semi-definite, thus there exists $R>0$ such that 
\begin{equation}
\lambda_{1}(s,x)\geq 0,\quad \text{ for } s\in [0,R]\label{eqn: positive_lambda}
\end{equation}
For any geodesic $\gamma(t)=(s(t),x(t))$ near $\widetilde{T}_{0}$,
by the first variation formula, we have 
\begin{equation}
x'=\sin\phi,\label{eqn: phi_eqn_HD}
\end{equation}
where $\phi(t):=\phi(\gamma'(t))$, which then gives $x''=\phi'\cos\phi$.

We denote by 
\[
\gamma'_{\perp}(t):=\gamma'(t)-\langle\gamma'(t),X\rangle X
\]
the component of $\gamma'(t)$ that is orthogonal to $X$. Then $|\gamma'_{\perp}|=\cos\phi$. 
\begin{lem}
\label{lem: geod_eqn_HD} If $\gamma(t)=(s(t),x(t))$ is a geodesic
on $\widetilde{M}$ near $\widetilde{T}_{0}$, we have 
\[
x''=\II(\gamma'_{\perp},\gamma'_{\perp})\in[\lambda_{1}(s,x)\cos^{2}\phi,\lambda_{n-1}(s,x)\cos^{2}\phi].
\]
\end{lem}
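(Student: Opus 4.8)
The plan is to differentiate the signed-distance coordinate $x$ twice along $\gamma$ and to recognize the result as the second fundamental form of the level hypersurface $\widetilde{T}_x$, evaluated on the horizontal component $\gamma'_{\perp}$. The structural facts I would use all come from the product form \eqref{eqn: metric_HD} of the metric: the vertical field $X=\partial/\partial x$ is exactly the gradient of the function $x$, hence a unit vector field; its integral curves are the curves $s=\text{const.}$, which are geodesics by Remark~\ref{rem: Fermi coords}, so $\nabla_{X}X=0$; and $\langle\nabla_{v}X,X\rangle=\tfrac12 v\langle X,X\rangle=0$ for every $v$, since $|X|\equiv 1$.

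First I would compute $x'(t)=\tfrac{d}{dt}x(\gamma(t))=\langle\nabla x,\gamma'\rangle=\langle X,\gamma'\rangle$ (this is also the content of \eqref{eqn: phi_eqn_HD}, namely $x'=\sin\phi$). Differentiating again and using the geodesic equation $\nabla_{\gamma'}\gamma'=0$ gives $x''=\langle\nabla_{\gamma'}X,\gamma'\rangle$. Then I would substitute the orthogonal decomposition $\gamma'=\gamma'_{\perp}+\langle\gamma',X\rangle X$ into both slots and expand: the term $\langle\nabla_{\gamma'}X,X\rangle$ vanishes by the unit-length identity above, and the two terms containing $\nabla_{X}X$ vanish as well, leaving $x''=\langle\nabla_{\gamma'_{\perp}}X,\gamma'_{\perp}\rangle$. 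Since $\gamma'_{\perp}$ is by definition the component of $\gamma'$ orthogonal to $X$ and the metric splits as $dx^{2}+g_{x}$, the vector $\gamma'_{\perp}$ lies in $T_{(s,x)}\widetilde{T}_{x}$, so $\langle\nabla_{\gamma'_{\perp}}X,\gamma'_{\perp}\rangle=\II(\gamma'_{\perp},\gamma'_{\perp})$. This establishes the asserted equality $x''=\II(\gamma'_{\perp},\gamma'_{\perp})$.

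For the two-sided estimate I would use that $\II(v,w)=\langle U(s,x)v,w\rangle$ with $U(s,x)$ symmetric and diagonalizable, eigenvalues $\lambda_{1}(s,x)\le\cdots\le\lambda_{n-1}(s,x)$; the Rayleigh-quotient inequality then gives $\lambda_{1}(s,x)\,|w|^{2}\le\II(w,w)\le\lambda_{n-1}(s,x)\,|w|^{2}$ for all $w\in T_{(s,x)}\widetilde{T}_{x}$. Applying this with $w=\gamma'_{\perp}$ and recalling that $|\gamma'_{\perp}|=\cos\phi$ yields $x''\in[\lambda_{1}(s,x)\cos^{2}\phi,\ \lambda_{n-1}(s,x)\cos^{2}\phi]$, as claimed.

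There is no serious obstacle here: the only points that need care are justifying $X=\nabla x$ and $\nabla_{X}X=0$ from the product metric and Remark~\ref{rem: Fermi coords}, and keeping track of which terms in the expansion of $\langle\nabla_{\gamma'}X,\gamma'\rangle$ drop out; the rest is routine bookkeeping.
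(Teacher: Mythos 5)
Your proposal is correct and follows essentially the same route as the paper: differentiate $x'=\langle X,\gamma'\rangle$ along the geodesic, kill the cross terms using $\nabla_{X}X=0$ and $|X|\equiv1$ to reduce to $\langle\nabla_{\gamma'_{\perp}}X,\gamma'_{\perp}\rangle=\II(\gamma'_{\perp},\gamma'_{\perp})$, then apply the eigenvalue bounds for the shape operator together with $|\gamma'_{\perp}|=\cos\phi$. You merely spell out the expansion of $\langle\nabla_{\gamma'}X,\gamma'\rangle$ in more detail than the paper does, which is fine.
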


\begin{proof}
By \eqref{eqn: phi_eqn_HD}, we have $x'=\sin\phi=\langle X,\gamma'(t)\rangle$.
Thus 
\[
x''=\frac{d}{dt}\langle X,\gamma'(t)\rangle=\langle\nabla_{\gamma'(t)}X,\gamma'(t)\rangle=\langle\nabla_{\gamma'_{\perp}(t)}X,\gamma'_{\perp}(t)\rangle=\II(\gamma'_{\perp},\gamma'_{\perp}).
\]
Note that the third equality uses the fact $\nabla_{X}X=0$; see Remark
\ref{rem: Fermi coords}. Since $|\gamma'_{\perp}|=\cos\phi$, we
have 
\[
\II(\gamma'_{\perp},\gamma'_{\perp})\geq\lambda_{1}(s,x)|\gamma'_{\perp}|^{2}=\lambda_{1}(s,x)\cos^{2}\phi
\]
and 
\[
\II(\gamma'_{\perp},\gamma'_{\perp})\leq\lambda_{n-1}(s,x)\cos^{2}\phi.
\]
This completes the proof. 
\end{proof}
\begin{rem}
\label{rem: surface_metric} When $M$ is a surface, then $T_{0}$
is a closed geodesic. In this case the Riemannian metric \eqref{eqn: metric_HD}
near $\widetilde{T}_{0}$ may be written as 
\begin{equation}
g=dx^{2}+G(s,x)^{2}ds^{2}\label{eqn: metric}
\end{equation}
with $G(s,0)\equiv1$. The Gaussian curvature is given by $K=-G_{xx}/G$,
and the second fundamental form at $\widetilde{T}_{x}$ is $G_{x}/G$.
In particular, by Lemma \ref{lem: geod_eqn_HD}, $x''$ admits the
following expression 
\begin{equation}
x''=\lambda(s,x)\cos^{2}\phi=\frac{G_{x}}{G}\cos^{2}\phi.\label{eqn: geod_eqn}
\end{equation}
\end{rem}

\section{Preparation and outline for pressure gaps results\label{sec:Preparation}}

\subsection{Shadowing map}

\label{subsec:Shadowing-map}

To distinguish vectors in $\Sing$ and generic vectors in $T^{1}M$,
we will use different fonts to denote them; more precisely, we will
write $\v\in\Sing$ and $v\in T^{1}M$. Given an orbit segment $(\v,t)\in T^{1}M\times\R^{+}$
with $\v\in\Sing$, we now describe a method for constructing a new
orbit segment that shadows $(\v,t)$. Though simple, this construction
will be crucial in proving Theorem \ref{Main-result} and Theorem
\ref{Main-result-3}.

For any $\v\in\text{Sing}$, suppose $\pi(\v)=(s_{0},0)$ for
some $s_{0}$. For any $t>0$ and any $R>0$ such that the Fermi coordinates
$(s,x)$ are well-defined for $|x|<R$, there exists $s_{1}\in\R$
such that the distance on $\widetilde{M}$ between $(s_{0},R)$ and
$((s\circ \pi)(f_{s_{1}}\v),R)$ is equal to $t$; see Figure \ref{fig1}.  From the triangle inequality we know that $|t-s_1|<2R$.
%\textcolor{purple}{Recall that $\varphi: T^1M\to \mathbb{R}$ is extended to $\varphi: TM\to \mathbb{R}$ by \eqref{eqn: rescaling_varphi}.  We have
%\begin{equation}\label{eqn: line_integral_inv}
%\int_0^{t} \varphi(\pi(f_\tau w))d\tau=
%\end{equation}
%}

Denoting by $\gamma$ the geodesic connecting these two points, we define 
\begin{equation}
\Pi_{t,R}(\v):=\gamma'(0).\label{eq: Pi}
\end{equation}
Throughout the paper, we will often write $w_{\v}$, or simply $w$,
to denote $\Pi_{t,R}(\v)$ whenever the context is clear. %Moreover, the parameter $R$ will assumed to be small enough such that the Fermi-coordinates $(s,x)$ is well-defined for $|x| <R$.

\begin{figure}[h]
\centering

\includegraphics[width=12cm]{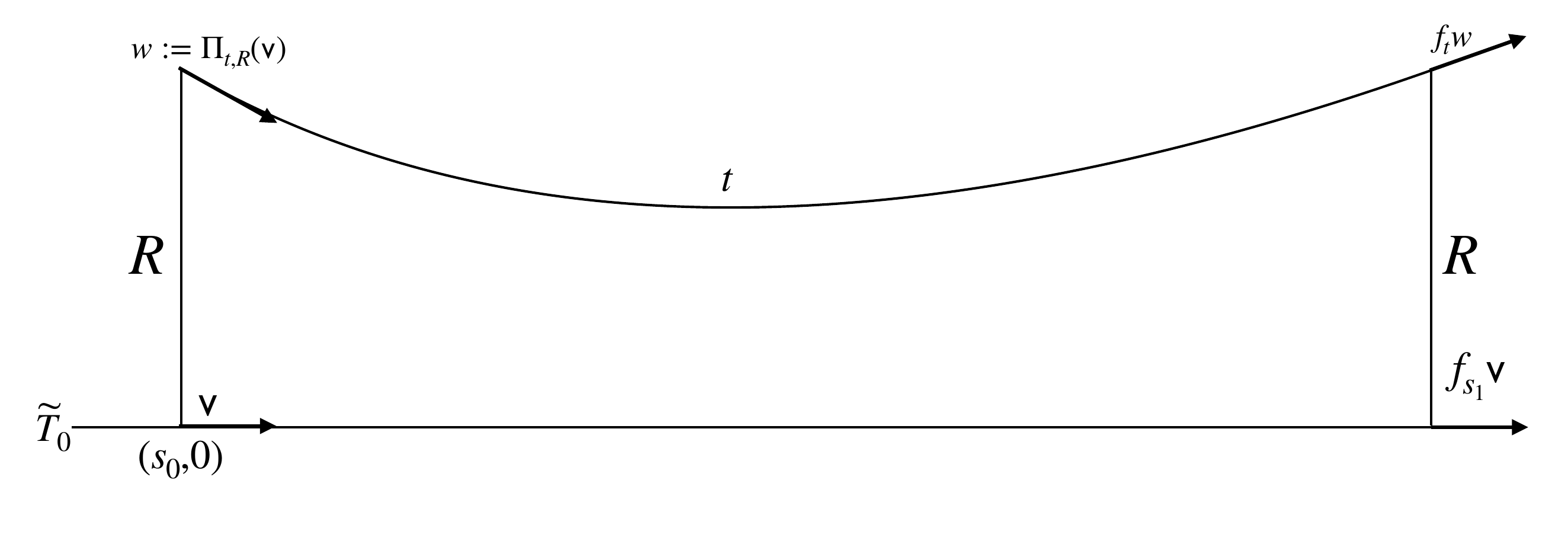} \caption{Construction of $\Pi_{t,R}$}
\label{fig1} 
\end{figure}

The next few lemmas establish a few properties on the map $\Pi_{t,R}$. 
\begin{lem}
\label{lem: angle_lower_bdd_reg} There exists $R_{0}>0$ such that for any $R\in(0,R_{0})$,  any $\v\in\Sing$ and any $t>0$,
the following statements hold: 

\begin{enumerate}[font=\normalfont]
\item The function $\tau\mapsto x_{w}(\tau)$ is convex and positive for any $\tau\in[0,t]$. 
\item $|\phi_{w}(\tau)|<\pi/4$
for all $\tau\in[0,t]$. 
\item For any $R$, there exists $\eta=\eta(R)>0$ such that $w,f_{t}w\in\text{Reg}(\eta)$. 
\end{enumerate}
\end{lem}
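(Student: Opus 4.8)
The three statements are essentially geometric consequences of the construction of $\Pi_{t,R}$ together with the curvature hypotheses. The plan is to treat them in the order (1), (2), (3), since convexity of $x_w(\tau)$ feeds into the angle bound, and both feed into the lower bound on hyperbolicity.

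For (1), the key is Lemma \ref{lem: geod_eqn_HD}, which gives $x'' = \II(\gamma'_\perp, \gamma'_\perp) \in [\lambda_1(s,x)\cos^2\phi, \lambda_{n-1}(s,x)\cos^2\phi]$ along any geodesic near $\widetilde{T}_0$. Since $M$ is nonpositively curved and $T_0$ is totally geodesic and flat, the hypersurfaces $\widetilde{T}_x$ bend away from $\widetilde{T}_0$ as $|x|$ increases; concretely, nonpositive curvature forces the principal curvatures $\lambda_i(s,x)$ to have the same sign as $x$ (this is the standard fact that the distance function to a totally geodesic submanifold is convex in nonpositive curvature — it follows from the Riccati comparison applied to the shape operator of $\widetilde{T}_x$, using that at $x=0$ the shape operator vanishes). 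Hence for $x > 0$ we get $x'' \geq \lambda_1(s,x)\cos^2\phi \geq 0$, and similarly for $x < 0$ we get $x'' \leq 0$. Now the geodesic $w$ produced by $\Pi_{t,R}$ runs between two points $(s_0, R)$ and $(\pi(\widetilde f_{s_1}\widetilde{\v}), R)$ both at height $x = R > 0$; I must first argue that $x_w(\tau) > 0$ for all $\tau \in (0,t)$ — i.e. the connecting geodesic stays on the positive side. This follows because if $x_w$ ever reached $0$ it would, by convexity on the region $x>0$ and the boundary values $x_w(0) = x_w(t) = R$, have to dip to $x = 0$ and then one uses that on the other side $x'' \leq 0$ to derive a contradiction with a minimum; more cleanly, one can note that the minimal geodesic between two points on the "outside" of a convex region (the region $\{x \geq 0\}$, whose boundary $T_0$ is totally geodesic, hence convex) stays outside, so $x_w \geq 0$, and then $x_w'' \geq 0$ throughout $[0,t]$. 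That gives convexity.

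For (2): at the endpoints $\tau = 0$ and $\tau = t$ the geodesic $\gamma$ connects two points at the same height $R$, and since $x_w$ is convex with $x_w(0) = x_w(t) = R$, the minimum of $x_w$ on $[0,t]$ is interior, so $x_w(\tau) \leq R$ throughout. By \eqref{eqn: phi_eqn_HD} we have $x_w' = \sin\phi_w$; convexity of $x_w$ means $x_w'$ (hence $\sin\phi_w$, hence $\phi_w$) is nondecreasing, so $\phi_w$ attains its extreme values at $\tau = 0$ and $\tau = t$. By symmetry of the construction $\phi_w(0) = -\phi_w(t)$, and $|\sin\phi_w(0)| = |x_w'(0)|$ can be bounded: the total rise from the minimum height back up to $R$ over a horizontal displacement bounded below (the chord has length $t$ and lies at heights $\leq R$) — more simply, one controls $|\phi_w(0)|$ directly from the geometry of the chord, which connects two points whose $x$-coordinates are both $R$ and whose "horizontal" separation measured in $\widetilde{T}_0$ grows; as $R \to 0$ the geodesic becomes nearly horizontal near its endpoints. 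Choosing $R_0$ small enough that this forces $|\phi_w(0)| < \pi/4$ gives the claim. The cleanest route is: by convexity $|\phi_w(\tau)| \le \max(|\phi_w(0)|, |\phi_w(t)|) = |\phi_w(0)|$, and $|\phi_w(0)| \to 0$ as $R \to 0$ uniformly in $t$ and $\v$ because the triangle with vertices $(s_0,0)$, $(s_0,R)$, $(\pi(\widetilde f_{s_1}\widetilde\v), R)$ has a right angle at $(s_0,R)$ (the segment $s = s_0$ is geodesic and perpendicular to $\widetilde T_0$, hence perpendicular to $\widetilde T_R$ up to $O(R)$) and its vertical leg has length $R$ while the "hypotenuse" has length $\geq$ something bounded below when $t$ is bounded below — and for $t$ small one handles it separately or absorbs it. I would package this as a short comparison estimate.

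For (3): this is where one invokes that $M$ is negatively curved away from $T_0$ and the definition of $\Reg(\eta)$. Since $w = \Pi_{t,R}(\v)$ has $x_w = R > 0$ and, by (2), $|\phi_w| < \pi/4$, the base point $\pi w$ lies at distance $R$ from $T_0$ where the curvature is bounded above by some $-c(R) < 0$. By the Riccati comparison \eqref{eq:operator-Riccati} — comparing $U^u_w(0)$ against the solution of the scalar Riccati equation with constant curvature $-c(R)$ — the minimal eigenvalue $\lambda^u(w)$ of the unstable shape operator is bounded below by a positive constant $\eta(R)$ depending only on $R$ (and similarly for $\lambda^s$); more precisely one uses that $U^u$ at a point is bounded below by $\sqrt{c}\tanh$ of (the time spent in the negatively curved region going backward), but since $w$ itself sits at height $R$ with small angle, it spends a definite amount of time near height $R$, giving a uniform lower bound. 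The same argument applied to $f_t w$, which also has height $R$ by construction and angle $|\phi_{f_t w}| = |\phi_w(t)| < \pi/4$, gives $f_t w \in \Reg(\eta)$.

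\textbf{Main obstacle.} The technical heart is step (1)/(2): rigorously establishing that $x_w \geq 0$ (equivalently, that the shadowing geodesic does not cross to the other side of $T_0$) and the uniform-in-$t$ smallness of $|\phi_w(0)|$. The first is really the convexity of the region $\{x \ge 0\}$ bounded by the totally geodesic flat $T_0$ in a nonpositively curved manifold, which needs the Riccati/comparison argument for the shape operators $\lambda_i(s,x)$; the second needs a clean right-triangle comparison estimate in $\widetilde M$ that is uniform in the orbit length $t$ and in the singular vector $\v$ (using compactness of $\Sing$). I expect the rest — deducing (3) from the curvature bound at height $R$ via the Riccati equation — to be routine given the machinery already set up in \cite{burns2018unique}.
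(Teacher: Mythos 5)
Parts (1) and (3) of your plan are essentially workable, though they differ from the paper's route: for (1) the paper simply notes from the construction that $|\phi_{w}|\le\pi/2$ and $\phi_{w}'\ge0$ and concludes via $x_{w}''=\phi_{w}'\cos\phi_{w}$, whereas your argument through nonnegativity of the shape operator of the slices $\widetilde{T}_{x}$ for $x\ge0$ together with convexity of the half-space $\{x\ge0\}$ (so the chord stays on one side of $\widetilde T_0$) is a more detailed but correct version of the same geometry; for (3) the paper runs no Riccati comparison at all --- it merely observes that every vector whose footpoint is off $T_{0}$ is regular, so the compact set of vectors at height $R$ sits inside $\Reg$ and hence inside some $\Reg(\eta)$ by the exhaustion --- your curvature/Riccati estimate could be pushed through, but it is heavier than needed and the step ``spends a definite amount of time near height $R$'' itself needs convexity control that the construction only gives you on $[0,t]$.

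The genuine gap is in (2). Your uniform-in-$t$ bound on $|\phi_{w}(0)|$ rests on the triangle with vertices $(s_{0},0)$, $(s_{0},R)$ and the far endpoint, with the claim that a short vertical leg and a long hypotenuse force the angle at $(s_{0},R)$ to be close to $\pi/2$. That mechanism fails: side-length data of this kind does not control the angle at the common vertex even in the Euclidean plane (with $|AB|=R$ and $|BC|=t\gg R$ the angle at $B$ can be anything in $(0,\pi)$ as $|AC|$ ranges over $[t-R,t+R]$), and the CAT(0) comparison one can actually justify, using only $|AC|\ge t-2R$, gives the useless bound $\cos\angle_{B}\le 2-2R/t$. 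Worse, the ``right angle at $(s_{0},R)$'' you invoke is essentially the conclusion $\phi_{w}(0)\approx0$: the vertical geodesic is orthogonal to the slice $\{x=R\}$, but nothing a priori makes the chord orthogonal to the vertical, so the step is circular. The paper closes this point differently: it bounds $|\phi_{w}(0)|$ by the angle of the geodesic issuing from the same footpoint that is forward asymptotic to $\widetilde{T}_{0}$ (a vector in $W^{s}(\v)$), and then uses that the function $R\mapsto\sup\{|\phi(u)|\colon u\in W^{s}(\v)\text{ for some }\v\in\Sing,\ x(\pi u)=R\}$ vanishes at $R=0$ and varies continuously in $R$ (compactness of $\Sing$), with the symmetric $W^{u}$ argument handling $\phi_{w}(t)$. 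An alternative fix would be a curvature-based estimate such as $|\sin\phi_{w}(0)|=|x_{w}'(0)|\lesssim R^{(m+2)/2}$ coming from the upper bound on $x''$, but some input beyond bare triangle side-lengths is indispensable; as written, your proof of (2) does not go through, and (2) is precisely what Remark \ref{rem: phi} and the later Sections \ref{sec: higher dim}--\ref{sec: GW surface} rely on.
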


\begin{proof}
The convexity in (1) is a consequence of \eqref{eqn: positive_lambda} and Lemma \ref{lem: geod_eqn_HD}. Now we prove positivity. If $x_{v}$ vanishes somewhere in $(0,t)$, assume $\wt{a}$ is
the smallest zero of $x_{v}$ in $(0,t)$. It is clear that $x_{v}'(\wt{a})\leq0$.
If $x_{v}'(\wt{a})=0$, then it would imply $\gamma_{v}\subseteq\widetilde{T}_{0}$
which is impossible. Thus $x_{v}'(\wt{a})<0$, and denote by $\wt{b}$
the next zero of $x_{v}$. Then $x_{v}$ is a geodesic connecting
two distinct points on a totally geodesic submanifold $\widetilde{T}_{0}$,
which would imply $\gamma_{v}\subseteq\widetilde{T}_{0}$, again resulting
in a contradiction. Therefore, $x_{v}(\tau)$ is positive for all
$\tau\in[0,t]$.  

For (2), it is clear that 
\[
|\phi_{w}(\tau)|\leq\max\{|\phi_{w}(0)|,|\phi_{w}(t)|\}
\]
for all $\tau\in[0,t]$. We then observe that $|\phi_{w}(0)|\leq|\phi(\gamma'_{w,s}(0))|$
where $\gamma_{w,s}$ is the geodesic with the same initial point
as $\gamma_{w}$ that is forward asymptotic to $\widetilde{T}_{0}$.
The statement then follows as the function 
\[
R\mapsto\sup\{|\phi(u)|\colon u\in W^{s}(\v)\text{ for some }\v\in\Sing\text{ and }x(\pi u)=R\}
\]
vanishes when $R=0$ and varies continuously in $R$. By repeating
the same argument with the unstable manifolds $W^{u}(\v)$ for $\v\in\Sing$,
we can find $R_{0}>0$ with the desired property.

For (3), any unit vector $v\in T^{1}M$ that
satisfies $x(\pi(v))\neq0$ belongs to $\Reg$. Since $\Reg$
is exhausted by compact subsets $\Reg(\eta)$ and the flat torus $T_{0}$
in $M$ is compact, the statement follows. 
\end{proof}

\begin{rem}
\label{rem: phi} From here on, we will assume that $R$ belongs to
$[0,\min(R_{0},R_{1})]$ with $R_{0},R_{1}$ defined as in above lemmas.
In particular, we will often evoke Lemma \ref{lem: angle_lower_bdd_reg}
to use the inequality 
\[
\cos\phi_{w}(\tau)\in\Big[\frac{\sqrt{2}}{2},1\Big]
\]
for any $w=\Pi_{t,R}(\v)$ and $\tau\in[0,t]$. %Lemma \ref{lem: preserving E}
%will mainly be used in establishing the pressure gap in Section \ref{sec: pressure gap}. 
\end{rem}

\subsection{Key inequality and the outline of Theorem \ref{Main-result}}

\label{subsec: outline} In this subsection, we provide a brief outline
of what consists of the remaining sections. Recall from Lemma \ref{lem: angle_lower_bdd_reg}
that $x_{w}(\tau)$ is a convex function on $\tau\in[0,t]$ for any
$w=\Pi_{t,R}(\v)$, and hence there exists a well-defined number $\wt{t}\in[0,t]$
such that 
\begin{equation}
x_{w}(\wt{t})=\min_{\tau\in[0,t]}x_{w}(\tau)\label{eq: tmin}
\end{equation}
and that $\wt{t}$ is the smallest among all such numbers. In the
case where $x_{w}(\tau)$ is strictly convex, there is a unique $\wt{t}$
which attains the minimum of $x_{w}(\tau)$. Note that $x'_{w}(\tau)\leq0$
and $\phi_{w}(\tau)\leq0$ for $\tau\in[0,\wt{t}\,]$.

The goal of the next few sections is to establish bounds on the distance
$x_{w}(\tau)$ and the angle $\phi_{w}(\tau)$ under the assumption
that the radial curvature $K_{\perp}$ vanishes to the order of $m-1$
at $T_{0}$ and that the curvature near $T_{0}$ is controlled; see
Section \ref{sec: higher dim} and \ref{sec: GW surface} for the
precise description of the setting. In particular, we will show that
for suitable $R>0$, there exists $Q=Q(R)>1$ independent of $t$
such that the shadowing vector $w=\Pi_{t,R}(\v)$ for any $\v\in\Sing$
and any $t>0$ satisfies 
\begin{align}
\begin{split}Q^{-1}(\tau+1)^{-\frac{2}{m}}\leq & x_{w}(\tau)\leq Q(\tau+1)^{-\frac{2}{m}},\\
 & |\phi_{w}(\tau)|\leq Q[(\tau+1)^{-\frac{m+2}{m}}-(\wt{t}+1)^{-\frac{m+2}{m}}]
\end{split}
\label{eq: main}
\end{align}
for any $\tau\in[0,\wt{t}\,]$. From its derivation in Proposition
\ref{prop: estm_x_psi} and \ref{prop: estm_x_psi_GW}, it will be
clear that the analogous inequality holds for $\tau\in[\wt{t},t]$
by simply applying the symmetric argument starting from $\tau=t$
instead of $\tau=0$: 
\begin{align}
\begin{split}Q^{-1}(t-\tau+1)^{-\frac{2}{m}}\leq & x_{w}(\tau)\leq Q(t-\tau+1)^{-\frac{2}{m}},\\
 & |\phi_{w}(\tau)|\leq Q[(t-\tau+1)^{-\frac{m+2}{m}}-(\wt{t}+1)^{-\frac{m+2}{m}}].
\end{split}
\label{eq: main_sub}
\end{align}
In the setting considered in Section \ref{sec: higher dim} where
a uniform control on the curvature $K_{\perp}$ is assumed on the
entire $\widetilde{T}_{0}$, the angle $|\phi_{w}(\tau)|$ from \eqref{eq: main}
and \eqref{eq: main_sub} admits the corresponding lower bound also;
see Proposition \ref{prop: estm_x_psi}.

%These estimates, once established, are instrumental when considered with potentials $\vp$ satisfying \eqref{eq:main-ineq} near $\Sing$(with $C_{0}=0$): 
%\[\vp(v)\geq-C\left(|x_{v}|^{\frac{m}{2}+\ep_{1}}+|\phi_{v}|^{\frac{m}{m+2}+\ep_{2}}\right)\]
%for some $\ep_{1},\ep_{2},C>0$. 
The examples analyzed by Gerber and Wilkinson \cite{gerber1999holder} are indeed surfaces that satisfy the assumption on $K_{\perp}$ described above. Moreover, the potentials defined over these surfaces, which satisfy \eqref{eq:main-ineq}, are closely related to the geometric potential. Further details are provided in Section \ref{sec: GW surface}.

Assuming the estimates \eqref{eq: main} and \eqref{eq: main_sub},
we now derive useful consequences from them when considering potentials
$\vp$ satisfying \eqref{eq:main-ineq}. We will see in Section \ref{sec: pressure gap}
that, together with certain properties of the geodesic flow, these
results serve as sufficient criteria for the potential $\vp$ to have
the pressure gap. From direct integration using the estimates \eqref{eq: main}
and \eqref{eq: main_sub} we immediately get

\begin{prop}[Key inequality]
\label{prop: bowen_ball_int_esti} For some $R>0$, suppose that the
shadowing vector $w=\Pi_{t,R}(\v)$ satisfies \eqref{eq: main} and
\eqref{eq: main_sub} and $\varphi$ satisfies the assumption of Theorem \ref{Main-result}. Then there exists $Q=Q(R,C,\ep_{1},\ep_{2},\delta)>0$ such
that 
\[
\int_{0}^{t}\varphi(f_{\tau}u)d\tau\geq \int_0^{s_1}\varphi(f_{\tau}\v)d\tau-Q
\]
for any $u\in B_{t}(w,\d)$ where $w=\Pi_{t,R}(\v)$ for some $\v\in\Sing$. 
%for any $t>0$ and $\v\in\text{Sing}$, we have 
%\[\int_{0}^{t}\varphi(f_{\tau}w_{\v})d\tau\geq-Q.\]
\end{prop}
\begin{proof}
For Type 1 manifolds, the proof is easier as $\varphi\equiv c$  on Sing.  By \eqref{eq:main-ineq},  \eqref{eq: main}
and \eqref{eq: main_sub},  we have
\[
\int_{0}^{t}\varphi(f_{\tau}u)d\tau\geq ct-Q'\geq cs_1-Q
\]
as $|t-s_1|\leq 2R.$\\
For Type 2 manifolds, we first prove the inequality for $u=w$, using \eqref{eq:main-ineq},  \eqref{eq: main}
and \eqref{eq: main_sub} again we have
\[
\int_{0}^{t}\varphi(f_{\tau}w)d\tau\geq \int_{0}^{t}\varphi(f_{\tau}ds(w))d\tau-Q= \int_0^{s_1}\varphi(f_{\tau}\v)d\tau -Q,
\]
where the equality follows the fact that Type 2 manifolds are surfaces.

For general $u\in B_{t}(w,\d)$,  
we begin by intersecting the geodesic $\gamma_{u}(\tau)$ with the
hyperplane $x=R$ so that there are exactly two intersection points,
each near $\pi(u)$ and $\pi(f_{t}u)$; see Figure \ref{fig2}. We
denote by $(u_{0},t_{0})$ the orbit segment connecting such intersection
points. Since two orbit segments $(u,t)$ and $(u_{0},t_{0})$ differ
only at either end by length at most $2\d$, there exists a constant
$C_{\d}$ depending only on $\d$ and $\|\vp\|$ such that 
\[
\Big|\int_{0}^{t}\vp(f_{\tau}u)d\tau-\int_{0}^{t_{0}}\vp(f_{\tau}u_{0})d\tau\Big|<C_{\d}.
\]
\begin{figure}[h]
\centering \includegraphics[width=12cm]{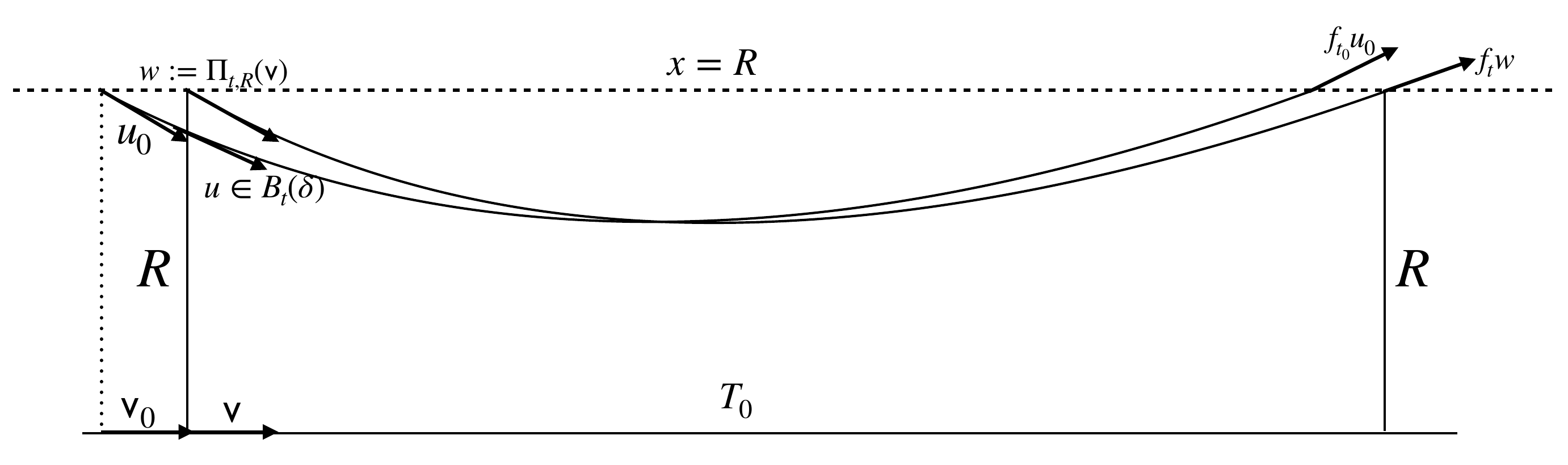} \caption{Figure for Proposition \ref{prop: bowen_ball_int_esti}}
\label{fig2} 
\end{figure}
Notice from its construction that $u_{0}$ is equal to $\Pi_{t_{0},R}(\v_{0})$
for some $\v_{0}\in\Sing$ near $v$, and hence the integral ${\displaystyle \int_{0}^{t_{0}}\vp(f_{\tau}u_{0})d\tau}$
admits a uniform lower bound independent of $t_{0}$ of the above
proposition. Therefore, the same is true for ${\displaystyle \int_{0}^{t}\vp(f_{\tau}u)d\tau}$
for $u\in B_{t}(w,\d)$. Lastly, since Type 2 manifolds are surfaces, the orbits of $\v_0$ and $\v$ largely overlap. As a result, the integrals along these two orbits remain uniformly bounded by a constant depending on $\delta$.

\end{proof}

\section{Estimates for Type 1 manifolds \label{sec: higher dim}}

Recall that $X=\partial/\partial x$ is the vertical vector field. 
For any $v\in T\widetilde{M}$ that is not collinear with $X$ we
define the \textit{radial curvature of $v$} by 
\begin{equation}
K_{\perp}(v):=K_{\sigma},\label{eq: normal curvature}
\end{equation}
that is, the sectional curvature of the plane $\sigma:=\text{span}\{v,X\}.$

In this section, we will consider the first of the two settings in
which $K_{\perp}$ vanishes uniformly to the order $m-1$. Namely,
if $(s,x)$ are the Fermi coordinates along $\widetilde{T}_{0}$,
there exists $C_{1},C_{2},\ep>0$ such that 
\begin{equation}
-C_{1}|x(v)|^{m}\leq K_{\perp}(v)\leq-C_{2}|x(v)|^{m}\label{eq: uniform curvature control}
\end{equation}
for any $v\perp X$ with $|x(v)|<\ep$.\\

As outlined in Subsection \ref{subsec: outline}, the main goal of
this section is to prove that under the above assumption on $K_{\perp}$,
the shadowing vector $w=\Pi_{t,R}(\v)$ for any $\v\in\Sing$ satisfies
the estimates on $x_{w}$ and $\phi_{w}$ claimed in \eqref{eq: main}.

Indeed, in this subsection, we will derive estimates on $x_{v}(t)$
for generic vectors $v\in T^{1}M$ near $\wt T_{0}$; namely, \textit{bouncing},
\textit{asymptotic}, and \textit{crossing vectors} (see Definition
\ref{def:bouncing_vec}). In Section \ref{sec:geo_potentials}, we
will discuss behaviors of geometric potentials $\vp^{u}$ with respect
to bouncing, asymptotic, and crossing vectors. Notice that by Definition
\ref{def:bouncing_vec}, shadowing vectors $w=\Pi_{t,R}(\v)$ are
bouncing vectors.

Let $N_{R}(\widetilde{T}_{0}):=\left\{ (s,x):s\in\widetilde{T}_{0},\ |x|<R\right\} $
be a neighborhood of $\widetilde{T}_{0}$, $v\in T^{1}M$ and $\gamma_{v}(t)=(s_{v}(t),x_{v}(t))$
in the Fermi coordinates. 
\begin{defn}
\label{def:bouncing_vec}Suppose $v\in T^{1}M$ such that $\gamma_{v}(0)\in N_{R}(\wt T_{0})$.
Let $-T_{1}=\inf\{t:\ \gamma_{v}(t)\in N_{R}(\widetilde{T}_{0})\}$
and $T_{2}=\sup\{t:\ \gamma_{v}(t)\in N_{R}(\widetilde{T}_{0})\}$.
We say that $v$ (relative to $N_{R}(\widetilde{T}_{0})$) is 
\begin{enumerate}
\item \textit{bouncing} if $T_{1},T_{2}<\infty$ and $x_{v}(t)>0$ or $x_{v}(t)<0$ for all $t\in(-T_{1},T_{2})$, 
\item \textit{asymptotic} if $T_{1}=\infty$ or $T_{2}=\infty$, 
\item \textit{crossing} if $T_{1},T_{2}<\infty$ and $x_{v}(t_{0})=0$ for some
$t_{0}\in(-T_{1},T_{2})$. 
\end{enumerate}
\end{defn}

Please see Fig \ref{fig:vec} for examples of these vectors. The definition
and study of these vectors are inspired by \cite{lima2021polydecay}. Later, in Lemma \ref{lem: grow_rate_x''_GW}, we show that the above definition is well-defined when \( R \) is sufficiently small. Moreover, by definition, all shadowing vectors are bouncing vectors and asymptotic
vectors are limiting cases of the bouncing vectors. More precisely,
one can regard asymptotic vectors as bouncing vectors $v$ with the
minimum of $x_{v}$ (when $\phi(v)<0$) occurring at $\wt t=\infty$
where, recalling from \eqref{eq: tmin}, $\wt{t}$ is the the time
(unique in this case) in which $x_{w}(t)$ attains its minimum.

\begin{figure}[h]
\centering \includegraphics[width=9cm]{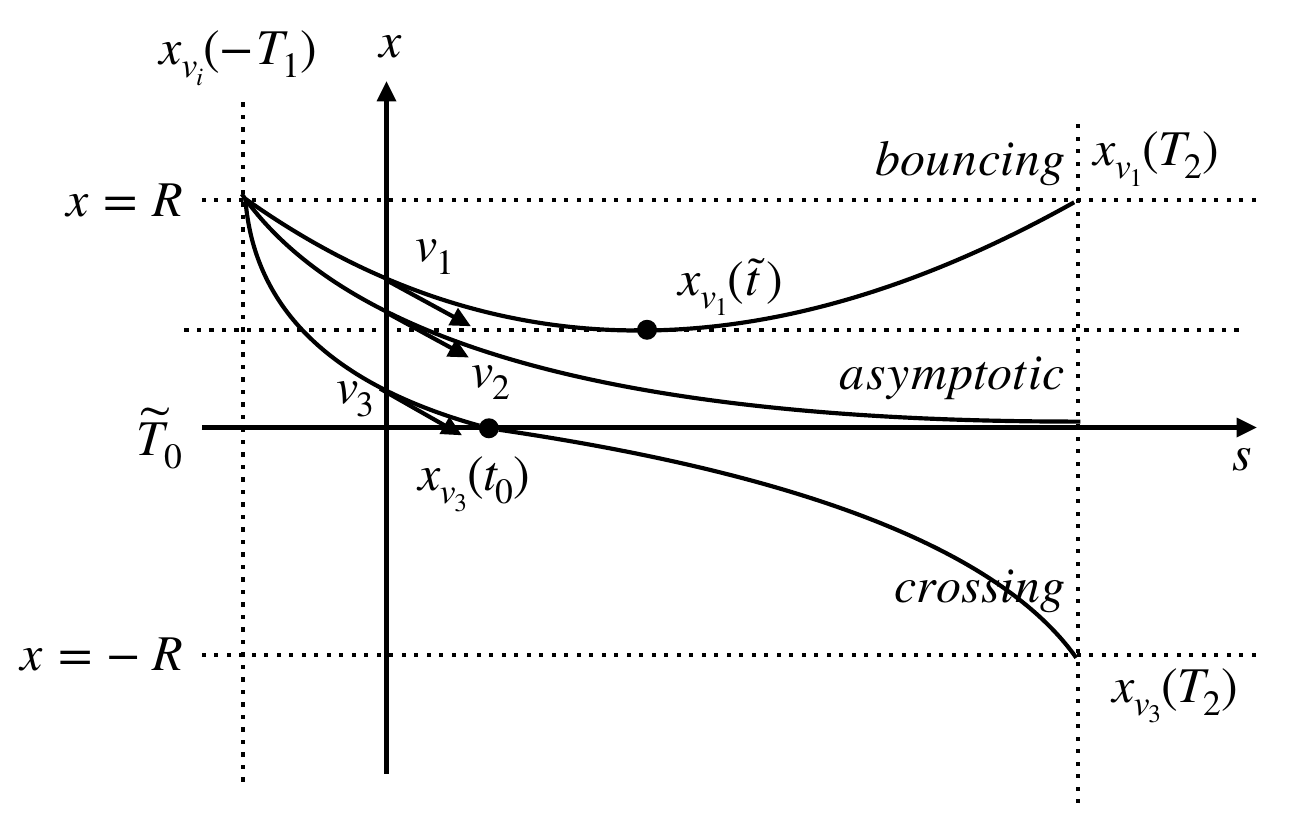} \caption{Bouncing, asymptotic and crossing vectors}
\label{fig:vec} 
\end{figure}

For Type 1 manifolds, the condition \eqref{eq: main} is established
in the proposition below (as for bouncing vectors):
\begin{prop}
\label{prop: estm_x_psi} For any $R>0$ sufficiently small (see Lemma
\ref{lem: grow_rate_x''_HD} for the domain $R$ can take), there
exists $Q=Q(R)>1$ independent of $t$ such that for any $v\in T^{1}M$
with $x_{v}(0)=R$ and $\phi_{v}(0)<0$, 

\begin{enumerate}[font=\normalfont]
\item if $v$ is a bouncing vector relative to $N_{R}(\widetilde{T}_{0})$,
then for $\tau\in[0,\wt{t}\,]$ we have 
\[
Q^{-1}(\tau+1)^{-\frac{2}{m}}\leq x_{v}(\tau)\leq Q(\tau+1)^{-\frac{2}{m}};
\]
\[
Q^{-1}[(\tau+1)^{-\frac{m+2}{m}}-(\wt{t}+1)^{-\frac{m+2}{m}}]\leq|\phi_{v}(\tau)|\leq Q[(\tau+1)^{-\frac{m+2}{m}}-(\wt{t}+1)^{-\frac{m+2}{m}}];
\]
\item if $v$ is an asymptotic vector relative to $N_{R}(\widetilde{T}_{0})$,
then for $\tau\in[0,\infty)$ we have 
\[
Q^{-1}(\tau+1)^{-\frac{2}{m}}\leq x_{v}(\tau)\leq Q(\tau+1)^{-\frac{2}{m}};
\]
\[
Q^{-1}[(\tau+1)^{-\frac{m+2}{m}}]\leq|\phi_{v}(\tau)|\leq Q[(\tau+1)^{-\frac{m+2}{m}}];
\]
\item if $v$ is a crossing vector relative to $N_{R}(\widetilde{T}_{0})$,
then for $\tau\in[0,t_{0}]$, we have 
\[
Q^{-1}[(\tau+1)^{-\frac{2}{m}}-(t_{0}+1)^{-\frac{2}{m}}]\leq x_{v}(\tau)\leq Q[(\tau+1)^{-\frac{2}{m}}-(t_{0}+1)^{-\frac{2}{m}}];
\]
\[
Q^{-1}(\tau+1)^{-\frac{m+2}{m}}\leq|\phi_{v}(\tau)|\leq Q(\tau+1)^{-\frac{m+2}{m}}.
\]
\end{enumerate}
\end{prop}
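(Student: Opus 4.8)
The plan is to reduce all three cases to a single ODE analysis for the pair $(x_v(\tau), \phi_v(\tau))$ governed by \eqref{eqn: phi_eqn_HD} (which gives $x_v' = \sin\phi_v$) together with the geodesic equation for $\phi_v'$. The key input will be Lemma \ref{lem: geod_eqn_HD}, which tells us $x_v'' = \II(\gamma'_\perp, \gamma'_\perp)$, together with the relation between the second fundamental form $\II$ on $\widetilde T_x$ and the normal curvature $K_\perp$: differentiating the Riccati-type relation for the shape operator $U(s,x)$ in the $x$-direction and using the uniform curvature hypothesis \eqref{eq: uniform curvature control}, $-C_1|x|^m \le K_\perp \le -C_2|x|^m$, one should obtain matching two-sided bounds $\widetilde C_1 |x|^{m-1} \le \lambda_i(s,x) \le \widetilde C_2 |x|^{m-1}$ for the principal curvatures (this is the content of the auxiliary lemmas referenced as Lemma \ref{lem: grow_rate_x''_HD} and Lemma \ref{lem: grow_rate_x''_GW} — I would state and prove the needed curvature-to-shape-operator comparison first). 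Combined with $|\gamma'_\perp|^2 = \cos^2\phi_v \in [1/2, 1]$ on the relevant range (Remark \ref{rem: phi}), this yields
\[
c_1\, x_v(\tau)^{m-1} \le x_v''(\tau) \le c_2\, x_v(\tau)^{m-1}
\]
for constants $c_1, c_2 > 0$ depending only on $R$, valid as long as $\gamma_v(\tau) \in N_R(\widetilde T_0)$.

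Given this differential inequality, the strategy is a comparison argument with the exact solutions of the model equation $y'' = c\, y^{m-1}$. Multiplying by $x_v'$ and integrating gives an energy-type identity: $\tfrac12 (x_v')^2 = \tfrac{c}{m} x_v^m + \text{const}$ up to the two-sided constants, i.e. $(x_v')^2 \asymp x_v^m - x_v(\widetilde t)^m$ on $[0,\widetilde t\,]$ for a bouncing vector, where $\widetilde t$ is the minimizing time from \eqref{eq: tmin}. Since $x_v' = \sin\phi_v$ and $|\phi_v| \le \pi/4$ there, we get $|\phi_v(\tau)|^2 \asymp |x_v'(\tau)|^2 \asymp x_v(\tau)^m - x_v(\widetilde t)^m$; this already explains the exponent $\tfrac{m}{2}$ versus $\tfrac{m}{m+2}$ matching, since if $x_v(\tau) \asymp (\tau+1)^{-2/m}$ then $x_v(\tau)^m \asymp (\tau+1)^{-2}$ and $|\phi_v(\tau)| \asymp (\tau+1)^{-1} = (x_v(\tau))^{(m+2)/2 \cdot \, ?}$ — more precisely $|\phi_v| \asymp x_v^{m/2}$ in the asymptotic regime, consistent with the claimed powers. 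To nail down the rate $x_v(\tau) \asymp (\tau+1)^{-2/m}$ itself, I would separate variables once more: from $x_v' \asymp -\sqrt{x_v^m - x_v(\widetilde t)^m}$ we integrate $\int_{x_v(\tau)}^{R} \frac{ds}{\sqrt{s^m - x_v(\widetilde t)^m}} \asymp \tau$, and a direct estimate of this integral (splitting according to whether $s \ge 2 x_v(\widetilde t)$ or not) produces the claimed two-sided bounds in all three cases, with the crossing case corresponding to $x_v(\widetilde t) = 0$ reached at finite time $t_0$ and the asymptotic case to $\widetilde t = \infty$, $x_v(\infty) = 0$.

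For the three cases: the \textbf{bouncing} case is the main one and gives \eqref{eq: main} directly after the symmetric argument for $[\widetilde t, t]$; the \textbf{asymptotic} case is the limit $\widetilde t \to \infty$, so the lower bound on $|\phi_v|$ loses the subtracted term and becomes clean, and one must additionally argue that such a trajectory stays in $N_R$ for all forward (or backward) time and that $x_v \to 0$ — this follows since $x_v$ is decreasing and bounded below by $0$, and the differential inequality forces the limit to be $0$; the \textbf{crossing} case is handled by the same integral with the roles of endpoints played by $x = R$ at $\tau = 0$ and $x = 0$ at $\tau = t_0$, and here the angle stays bounded away from $0$ so the clean lower bound $|\phi_v(\tau)| \gtrsim (\tau+1)^{-(m+2)/m}$ holds.

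The main obstacle I anticipate is establishing the two-sided comparison $\lambda_i(s,x) \asymp |x|^{m-1}$ between the principal curvatures and the normal curvature bound \eqref{eq: uniform curvature control} — the hypothesis controls only the sectional curvature $K_\perp(v) = K_{\mathrm{span}\{v,X\}}$ for $v \perp X$, whereas Lemma \ref{lem: geod_eqn_HD} involves $\II(\gamma'_\perp, \gamma'_\perp)$, and passing between them requires integrating the Riccati equation \eqref{eq:operator-Riccati} (or its scalar analogue \eqref{eqn: geod_eqn} in the surface case) in the transverse direction while controlling the quadratic term $(U_v^u)^2$; since $U(s,0) = 0$ on the flat torus $T_0$, for $|x|$ small the quadratic term is lower order and one recovers $U(s,x) \approx -\int_0^x \mathcal K\, dx' \asymp |x|^{m-1}$, but making the constants uniform in $s \in \widetilde T_0$ and tracking the dependence on $R$ carefully is the delicate point. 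A secondary technical issue is ensuring the bouncing/crossing/asymptotic trichotomy of Definition \ref{def:bouncing_vec} is exhaustive and that $\widetilde t$ is well-defined, which follows from the strict convexity of $x_v(\tau)$ coming from $x_v'' > 0$ wherever $x_v > 0$.
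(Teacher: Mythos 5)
Your overall strategy is the right one and is essentially the paper's: convert the curvature hypothesis \eqref{eq: uniform curvature control} into a two--sided bound on the second fundamental form via the transverse Riccati equation, deduce a two--sided differential inequality for $x_v''$, run an energy/first--integral argument using $x_v'=\sin\phi_v$ from \eqref{eqn: phi_eqn_HD}, and treat the three cases by where the turning point sits ($\wt t<\infty$, $\wt t=\infty$, or $x_v(\wt t)=0$ at finite time $t_0$). However, there is a genuine quantitative error at exactly the step you flagged as the main obstacle. Integrating the Riccati equation $U'+U^2-Cx^mI=0$ with $U(0)=0$ (Lemma \ref{lem: ric_sol_comp}, made rigorous by Eschenburg's comparison in Lemma \ref{lem: grow_rate_x''_HD}) gives $U(s,x)\approx\int_0^x Cs^m\,ds\asymp|x|^{m+1}$, not $|x|^{m-1}$ as you wrote; consequently the correct differential inequality is $x_v''\asymp x_v^{m+1}$, and the energy identity reads $(x_v')^2\asymp x_v^{m+2}-x_v(\wt t)^{m+2}$, not $x_v^{m}-x_v(\wt t)^{m}$. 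With your exponents the separated integral produces $x_v(\tau)\asymp(\tau+1)^{-2/(m-2)}$ and, in the asymptotic regime, $|\phi_v|\asymp x_v^{m/2}\asymp(\tau+1)^{-1}$ --- which is precisely what you compute and then call ``consistent with the claimed powers,'' although it contradicts the stated rates $(\tau+1)^{-2/m}$ and $(\tau+1)^{-(m+2)/m}$. So as written the argument proves a different (incorrect) statement; the fix is simply to carry the exponent $m+1$ through, after which your scheme coincides with the paper's (which packages the separation-of-variables step as Lemma \ref{lem: estm_sol_ODE_discont} with $\alpha=m+2$, $\beta=1/2$ for $x$, and $\alpha=2$, $\beta=\frac{m+1}{m+2}$ for the crossing case).

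A secondary gap: in the bouncing case the claimed two--sided bound on $|\phi_v(\tau)|$ is in the \emph{difference} form $(\tau+1)^{-\frac{m+2}{m}}-(\wt t+1)^{-\frac{m+2}{m}}$, and your energy-identity shortcut only yields $|\phi_v(\tau)|\asymp\bigl(x_v(\tau)^{m+2}-x_v(\wt t)^{m+2}\bigr)^{1/2}$, which does not manifestly reduce to that difference near $\tau=\wt t$ (where both factors degenerate and the two expressions are not interchangeable term by term). The paper gets the stated form by first establishing the bound on $x_v(\tau)$, then integrating $(\sin\phi_v)'=x_v''\asymp(\tau+1)^{-\frac{2(m+1)}{m}}$ in time from $\tau$ to $\wt t$ and using $\sin\phi_v(\wt t)=0$; you would need to add this (short) step rather than read the $\phi$-bound off the first integral.
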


\begin{rem}
Proposition \ref{prop: estm_x_psi} is more general than \eqref{eq: main}.
The lower bound of $|\phi_{v}(\tau)|$ is not necessary for deriving 
Proposition \ref{prop: bowen_ball_int_esti}.
\end{rem}

We begin by collecting relevant lemmas to prove this proposition,
the first of which concerns the general property of Riccati solutions. 
\begin{lem}
\label{lem: ric_sol_comp} There exists $R_{0}=R_{0}(C,m)$ such that
for any $R\in[0,R_{0}]$, and any solution $U:[0,R]\to M_{n-1}(\mathbb{R})$
of the following Riccati equation 
\begin{equation}
U'+U^{2}-Cx^{m}I_{n-1}=0,\quad U(0)=0,\label{eqn: ric_eqn_comp}
\end{equation}
we have 
\[
\frac{Cx^{m+1}}{2(m+1)}I_{n-1}\leq U(x)\leq\frac{Cx^{m+1}}{m+1}I_{n-1}.
\]
\end{lem}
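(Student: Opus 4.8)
The plan is to treat the scalar Riccati equation $u' + u^2 - Cx^m = 0$, $u(0)=0$, since the matrix equation \eqref{eqn: ric_eqn_comp} with scalar curvature term $Cx^m I_{n-1}$ decouples: by uniqueness of solutions to the initial value problem and the fact that $Cx^m I_{n-1}$ is a scalar multiple of the identity, the solution is $U(x) = u(x) I_{n-1}$ where $u$ solves the scalar equation. (If one prefers to avoid invoking decoupling, one can instead use Riccati comparison for symmetric operators directly, comparing $U$ against $u_{\pm}(x) I_{n-1}$ for suitable scalar sub/supersolutions $u_{\pm}$; the argument is the same.) So it suffices to prove
\[
\frac{Cx^{m+1}}{2(m+1)} \leq u(x) \leq \frac{Cx^{m+1}}{m+1}
\]
for $x \in [0, R_0]$ with $R_0 = R_0(C,m)$ chosen below.

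For the \emph{upper bound}, I would use the supersolution $\overline u(x) := \frac{Cx^{m+1}}{m+1}$. Then $\overline u(0) = 0$ and $\overline u' + \overline u^2 - Cx^m = \overline u^2 = \left(\tfrac{C}{m+1}\right)^2 x^{2m+2} \geq 0$, so $\overline u$ is a supersolution of the Riccati equation with the same initial value. A standard Riccati comparison argument — set $w = \overline u - u$, note $w(0) = 0$ and $w' = (\overline u' - u') = -(\overline u^2 - u^2) + (\text{nonnegative error}) \geq -(\overline u + u) w$, so $w$ satisfies a linear differential inequality $w' \geq -a(x) w$ with $w(0)=0$, forcing $w \geq 0$ on $[0,R_0]$ — gives $u(x) \leq \overline u(x)$. (One must first know $u$ stays finite on $[0,R_0]$; this follows because $u$ starts at $0$ with $u'(0) = 0$ and, on any interval where $u$ is bounded, \eqref{eqn: ric_eqn_comp} keeps $u'$ bounded, so blow-up is precluded for $R_0$ small — quantified momentarily.)

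For the \emph{lower bound}, I would try the subsolution $\underline u(x) := \frac{Cx^{m+1}}{2(m+1)}$. Here $\underline u(0)=0$ and
\[
\underline u' + \underline u^2 - Cx^m = \frac{Cx^m}{2} + \frac{C^2 x^{2m+2}}{4(m+1)^2} - Cx^m = -\frac{Cx^m}{2} + \frac{C^2 x^{2m+2}}{4(m+1)^2},
\]
which is $\leq 0$ precisely when $x^{m+2} \leq \frac{2(m+1)^2}{C}$, i.e. for $x \leq R_0 := \left(\frac{2(m+1)^2}{C}\right)^{1/(m+2)}$ (shrink further if needed so that the earlier finiteness discussion applies — in fact this same $R_0$ controls everything). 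On $[0,R_0]$, $\underline u$ is thus a genuine subsolution with matching initial value, and the same comparison lemma (now with the inequality reversed, applied to $u - \underline u$) yields $u(x) \geq \underline u(x)$. This is the step I expect to be the only real obstacle: one has to verify that the quadratic error term $\underline u^2$ is genuinely dominated by the gain $-\tfrac12 Cx^m$ coming from taking half the naive coefficient, which is exactly what pins down the admissible range of $R_0$ in terms of $C$ and $m$; everything else is the routine Riccati comparison principle plus the observation that the matrix equation reduces to the scalar one. Assembling the two bounds and promoting back to the identity-multiple $U(x) = u(x)I_{n-1}$ completes the proof.
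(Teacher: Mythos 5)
Your proposal is correct and follows essentially the same route as the paper: reduce to the scalar Riccati equation $u'+u^{2}-Cx^{m}=0$ via uniqueness of the initial value problem, get the upper bound by dropping the quadratic term, and get the lower bound by choosing $R_{0}$ so that the quadratic term is dominated by $\tfrac12 Cx^{m}$. The paper simply integrates the resulting differential inequalities directly (bootstrapping the upper bound into $\lambda'\geq\tfrac{C}{2}x^{m}$, with $R_{0}=\bigl(\tfrac{(m+1)^{2}}{2C}\bigr)^{1/(m+2)}$), which is the direct-integration version of your sub/supersolution comparison and yields an equally valid, slightly different explicit $R_{0}$.
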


\begin{proof}
Let $\lambda:[0,R]\to\mathbb{R}$ be the solution of 
\[
\lambda'+\lambda^{2}-Cx^{m}=0,\quad\lambda(0)=0.
\]
Then $\lambda I_{n-1}$ satisfies \eqref{eqn: ric_eqn_comp}. Since
the solution of any first-order ODE is unique; we have $U=\lambda I_{n-1}$.

Now we estimate $\lambda$. Since $\lambda'=Cx^{m}-\lambda^{2}\leq Cx^{m}$,
we have 
\[
\lambda(x)=\int_{0}^{x}\lambda'(s)ds\leq\int_{0}^{x}Cs^{m}ds=\frac{C}{m+1}x^{m+1}.
\]
establishing the required upper bound on $U(x)$.

For the lower bound, let ${\displaystyle R_{0}:=\Big(\frac{(m+1)^{2}}{2C}\Big)^{\frac{1}{m+2}}}$.
Then for any $x\in[0,R_{0}]$ the upper bound for $\lambda(x)$ gives
\[
\lambda'=Cx^{m}-\lambda^{2}\geq Cx^{m}-\frac{C^{2}}{(m+1)^{2}}x^{2m+2}\geq\frac{C}{2}x^{m}
\]
which then gives ${\displaystyle \lambda(x)=\int_{0}^{x}\lambda'(s)ds\geq\frac{C}{2(m+1)}x^{m+1}}$
as required. 
\end{proof}
Recalling the notations $x_{v}(\tau):=x(\gamma_{v}(\tau))$ and $\phi_{v}(\tau):=\phi(\gamma_{v}'(\tau))$
from \eqref{eq: notation}, the following lemma uses the curvature
bound \eqref{eq: uniform curvature control} to compare $x''_{v}$
with $x_{v}^{m+1}$ for any shadowing vector $v$. 
\begin{lem}
\label{lem: grow_rate_x''_HD} There exists $R_{1}=R_{1}(C_{1},C_{2},m)>0$
such that for any $R\in[0,R_{1}]$ and any $v$ with $x_{v}(0)\leq R$,
we have 
\[
\frac{C_{2}}{4m+4}x_{v}^{m+1}\leq x_{v}''\leq\frac{C_{1}}{m+1}x_{v}^{m+1},
\]
as long as $0\leq x_v\leq R$. 
\end{lem}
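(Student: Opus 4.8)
The plan is to combine the geodesic equation from Lemma \ref{lem: geod_eqn_HD}, which says $x_v'' = \II(\gamma_{v\perp}', \gamma_{v\perp}')$, with a comparison between the shape operator $U(s,x)$ of the hypersurface $\widetilde T_x$ and the solutions of the scalar Riccati equations bounding it, using the curvature hypothesis \eqref{eq: uniform curvature control}. First I would recall that the shape operator $U(s,x)$ of $\widetilde T_x$ satisfies a matrix Riccati equation along the $x$-direction of the form $U' + U^2 + \mathcal{K} = 0$ with $U(s,0) = 0$ (since $\widetilde T_0$ is totally geodesic), where the curvature term $\mathcal{K}$ has entries controlled by the sectional curvatures of planes containing $X = \partial/\partial x$; the relevant ones are exactly the normal curvatures $K_\perp$, which by \eqref{eq: uniform curvature control} satisfy $-C_1|x|^m \leq K_\perp \leq -C_2|x|^m$. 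So $-\mathcal{K}$ is sandwiched between $C_2 |x|^m I_{n-1}$ and $C_1|x|^m I_{n-1}$ in the sense of quadratic forms.

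Next I would invoke a Riccati comparison principle: if $U$ solves $U' + U^2 = -\mathcal{K}$ with $U(0) = 0$ and $A x^m I \leq -\mathcal{K} \leq B x^m I$, then the solutions $U_A, U_B$ of the scalar-type equations \eqref{eqn: ric_eqn_comp} with constants $A$ and $B$ respectively (which by Lemma \ref{lem: ric_sol_comp} are explicit multiples of $x^{m+1}$ times the identity) bound $U$ from below and above. Concretely, for $R \leq R_0(C_1, m)$ we get, for every eigenvalue $\lambda_i(s,x)$ of $U(s,x)$ with $x \in [0,R]$,
\[
\frac{C_2}{2(m+1)} x^{m+1} \leq \lambda_i(s,x) \leq \frac{C_1}{m+1} x^{m+1}.
\]
Here one must be slightly careful since $\mathcal{K}$ need not be a scalar multiple of the identity; the cleanest route is to compare $U$ eigenvalue-by-eigenvalue against the scalar Riccati solutions, using the standard fact that a symmetric-matrix Riccati flow preserves the ordering $\leq$ induced by one-dimensional comparison (this is exactly how \eqref{eq:operator-Riccati} and Lemma \ref{lem: ric_sol_comp} are meant to interact). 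I would set $R_1 := R_0(C_1,m)$ from Lemma \ref{lem: ric_sol_comp} and possibly shrink it further so that the Fermi coordinates and all the above are valid.

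Finally, I would feed this into Lemma \ref{lem: geod_eqn_HD}. Since $|\gamma_{v\perp}'|^2 = \cos^2\phi_v$ and the principal curvatures $\lambda_1 \leq \cdots \leq \lambda_{n-1}$ at $(s_v, x_v)$ all lie in $[\tfrac{C_2}{2(m+1)} x_v^{m+1}, \tfrac{C_1}{m+1} x_v^{m+1}]$ when $0 \leq x_v < R$, we obtain
\[
x_v'' = \II(\gamma_{v\perp}', \gamma_{v\perp}') \in \Big[\tfrac{C_2}{2(m+1)} x_v^{m+1} \cos^2\phi_v,\; \tfrac{C_1}{m+1} x_v^{m+1} \cos^2\phi_v\Big].
\]
Using $\cos^2\phi_v \leq 1$ gives the upper bound $x_v'' \leq \tfrac{C_1}{m+1} x_v^{m+1}$ immediately. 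For the lower bound I need a uniform positive lower bound on $\cos^2\phi_v$: at the starting time $x_v(0) < R$ and $\phi_v(0) < 0$, and since $x_v'' > 0$ wherever $x_v > 0$ (bootstrapping: the bound shows $x_v'' \geq 0$, so $x_v$ is convex, hence stays below $R$ while $\phi_v$ stays controlled), one shows $|\phi_v(\tau)| < \pi/4$ as in Lemma \ref{lem: angle_lower_bdd_reg}(2) by choosing $R$ small, giving $\cos^2\phi_v \geq 1/2$ and hence $x_v'' \geq \tfrac{C_2}{4(m+1)} x_v^{m+1}$. The strict convexity and positivity of $x_v$ then follow by a standard ODE continuation argument: $x_v'' > 0$ as long as $x_v > 0$, and $x_v$ cannot reach $0$ from above in finite forward or backward time while remaining a solution, so it stays positive and strictly convex throughout its sojourn in $N_R(\widetilde T_0)$.

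The main obstacle is the matrix Riccati comparison in the non-scalar case: justifying cleanly that the eigenvalues of $U(s,x)$ are trapped between the two scalar Riccati solutions of Lemma \ref{lem: ric_sol_comp} when $-\mathcal{K}$ is only bounded between scalar multiples of the identity (rather than being such a multiple). This requires either a monotonicity/Sturm-type comparison for symmetric matrix Riccati equations or a direct argument tracking the largest and smallest eigenvalues of $U$ via the Rayleigh quotient; it is standard in this literature (cf. the use of \eqref{eq:operator-Riccati}) but is the one place where care is genuinely needed rather than routine computation. A secondary subtlety is the bootstrapping needed to keep $x_v < R$ and $|\phi_v| < \pi/4$ along the whole orbit segment, which has to be set up so the convexity conclusion and the angle bound are not circular — resolved by first establishing $x_v'' \geq 0$ (needing only $\cos^2 \phi_v \geq 0$ and the eigenvalue lower bound), then convexity, then the angle bound, then the strict inequality.
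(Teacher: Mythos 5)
Your derivation of the two-sided bound on $x_v''$ follows essentially the paper's route: sandwich $\mathcal{K}(X)$ between $-C_1|x|^m I_{n-1}$ and $-C_2|x|^m I_{n-1}$, compare the shape operator $U(s,x)$ of $\widetilde T_x$ (with $U(s,0)=0$) against the two scalar Riccati solutions, feed the resulting eigenvalue bounds from Lemma \ref{lem: ric_sol_comp} into Lemma \ref{lem: geod_eqn_HD}, and use $\cos^2\phi_v\geq 1/2$ (the $|\phi|<\pi/4$ bound of Lemma \ref{lem: angle_lower_bdd_reg}/Remark \ref{rem: phi}) for the lower bound, $\cos^2\phi_v\leq 1$ for the upper. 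The point you single out as the main obstacle --- the matrix Riccati comparison when $-\mathcal{K}$ is only pinched between scalar multiples of the identity --- is not an obstacle in the paper: it is exactly the comparison theorem of Eschenburg--Heintze, cited as the main theorem of \cite{eschenburg1990comparison}, which gives $U_2\leq U\leq U_1$ directly; no eigenvalue-by-eigenvalue argument is needed. Your choice $R_1=R_0(C_1,m)$ versus the paper's $\min\{R_0(C_1,m),R_0(C_2,m)\}$ is immaterial.

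There is, however, a genuine gap in your treatment of the final claim that $x_v$ is positive. Your continuation argument --- ``$x_v$ cannot reach $0$ from above in finite forward or backward time while remaining a solution'' --- is false: crossing vectors (Definition \ref{def:bouncing_vec}(3)) satisfy the same differential inequality while $x_v>0$ and do hit $x_v=0$ in finite time; nothing in the ODE $x''\geq \frac{C_2}{4m+4}x^{m+1}$ prevents $x$ from reaching $0$ with negative slope, and once $x<0$ the convexity even reverses sign, so you cannot bootstrap past a zero. The positivity in the lemma is a geometric statement about the bouncing configuration (the geodesic segment enters and exits the tube at $x=R$), and the paper proves it geometrically: if $x_v$ had a first zero $\tilde a$, then either $x_v'(\tilde a)=0$, so $\gamma_v$ is tangent to the totally geodesic $\widetilde T_0$ and hence contained in it, a contradiction; or $x_v'(\tilde a)<0$, in which case (since the segment returns to $x=R$) there is a second zero $\tilde b$, and the geodesic segment from $\gamma_v(\tilde a)$ to $\gamma_v(\tilde b)$ is a geodesic of the Hadamard cover $\widetilde M$ joining two points of the flat, totally geodesic $\widetilde T_0$; by uniqueness of geodesics it coincides with the straight segment inside $\widetilde T_0$, again forcing $\gamma_v\subset\widetilde T_0$, a contradiction. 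This use of the totally geodesic property and uniqueness of geodesics in $\widetilde M$ is the missing ingredient; the analytic continuation argument alone cannot deliver positivity.
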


\begin{proof}
Since $-C_{1}x(v)^{m}\leq K_{\perp}(v)\leq-C_{2}x(v)^{m},$ we have
\[
-C_{1}x^{m}I_{n-1}\leq\mathcal{K}(X)\leq-C_{2}x^{m}I_{n-1}
\]
\textcolor{black}{where $\mathcal{K}:v^{\perp}\to v^{\perp}$ is the
symmetric linear map such that $\langle\mathcal{K}(v)X,Y\rangle=\langle R(X,v)v,Y\rangle$
for $X,Y\in v^{\perp}$. } Using $R_{0}$ from the previous lemma,
we claim that we can take 
\[
R_{1}:=\min\{R_{0}(C_{1},m),R_{0}(C_{2},m)\}.
\]

For $i=1,2$ and $R\in(0,R_{1})$, let $U_{i}:[0,R]\to M_{n-1}(\mathbb{R})$
be the solution of 
\[
U_{i}'+U_{i}^{2}-C_{i}x^{m}I_{n-1}=0,\quad U_{i}(0)=0.
\]
By the main theorem in \cite{eschenburg1990comparison}, the solutions
satisfy 
\[
U_{2}\leq U\leq U_{1}
\]
on $[0,R]$. By Lemmas \ref{lem: geod_eqn_HD} and \ref{lem: ric_sol_comp}
and Remark \ref{rem: phi}, we have 
\[
x''_{v}(x)\geq\frac{\lambda_{1}(s,x_{v})}{2}\geq\frac{C_{2}}{4m+4}x_{v}^{m+1}
\]
and 
\[
x''_{v}(x)\leq\lambda_{n-1}(s,x_{v})\leq\frac{C_{1}}{m+1}x_{v}^{m+1}.
\]

\end{proof}

We also need the following auxiliary lemma. This lemma is inspired by standard comparison theorems in ordinary differential equations (ODEs).

\begin{lem}
\label{lem: estm_sol_ODE_discont} 

Let $f:[a,b]\to\R$ be a piecewise
smooth, strictly decreasing function with finitely many discontinuities.
Assume that $f(b)>0$ and that there exists $0<Q_{1}<Q_{2},\alpha>0,\beta\in(0,1)$
with $\alpha\beta>1$ such that 
\[
-Q_{2}(f(\tau)^{\alpha}-f(b)^{\alpha})^{\beta}\leq f'(\tau)\leq-Q_{1}(f(\tau)^{\alpha}-f(b)^{\alpha})^{\beta}
\]
when $f$ is smooth at $\tau$. Then there exists a constant $Q_{0}=Q_{0}(\alpha,\beta)>0$
independent of $a,b,f$ such that 
\[
(f(a)^{1-\alpha\beta}+Q_{2}(\alpha\beta-1)(\tau-a))^{\frac{1}{1-\alpha\beta}}\leq f(\tau)\leq Q_{0}(f(a)^{1-\alpha\beta}+Q_{1}(\alpha\beta-1)(\tau-a))^{\frac{1}{1-\alpha\beta}}
\]
for all $\tau\in[a,b]$. 
\end{lem}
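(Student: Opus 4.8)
The plan is to treat the differential inequality as a comparison problem between $f$ and the solutions of the two autonomous ODEs $y' = -Q_i(y^\alpha - c^\alpha)^\beta$, where $c := f(b) > 0$. Since $f$ is strictly decreasing with $f(\tau) > f(b) = c$ on $[a,b)$, the quantity $y^\alpha - c^\alpha$ stays positive, so both bounding ODEs have locally Lipschitz right-hand sides away from the value $y = c$ and standard ODE comparison applies on each smoothness interval of $f$. The first step is to handle the discontinuities: because $f$ is strictly decreasing, at each jump the value of $f$ only drops, which makes the upper-bound comparison only easier (the comparison solution lies above $f$, and a downward jump of $f$ keeps it below) and the lower-bound comparison likewise only easier in the appropriate direction; so one may pass to the claimed inequalities on each piece and then concatenate, since both comparison bounds are themselves monotone in the initial value. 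I would phrase this as: it suffices to prove the estimate assuming $f$ is smooth, then observe the finitely many downward jumps preserve both inequalities.

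Next I would carry out the explicit integration. On a smoothness interval, from $f' \le -Q_1(f^\alpha - c^\alpha)^\beta$ we get
\[
\frac{-f'(\tau)}{(f(\tau)^\alpha - c^\alpha)^\beta} \ge Q_1,
\]
but this integrand is not elementary. The cleaner route is to drop the $c^\alpha$ using a two-sided comparison of $f^\alpha - c^\alpha$ with $f^\alpha$: on $[a,b]$ one has $f(\tau)^\alpha - c^\alpha \le f(\tau)^\alpha$ trivially, giving the lower bound on $f$ directly by comparison with $y' = -Q_2 y^{\alpha\beta}$, whose solution is $y(\tau) = (y(a)^{1-\alpha\beta} + Q_2(\alpha\beta-1)(\tau-a))^{1/(1-\alpha\beta)}$ — exactly the stated left-hand side. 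For the upper bound on $f$ I need a reverse inequality $f^\alpha - c^\alpha \ge \kappa f^\alpha$ for some constant $\kappa = \kappa(\alpha,\beta) \in (0,1)$, which does \emph{not} hold pointwise near $\tau = b$; this is where the constant $Q_0$ enters. The device is to split $[a,b]$ at the time $\tau_*$ where $f(\tau_*) = 2^{1/\alpha} c$ (if it exists): on $[a,\tau_*]$ we have $f^\alpha - c^\alpha \ge \tfrac12 f^\alpha$, so comparison with $y' = -\tfrac{Q_1}{2^\beta} y^{\alpha\beta}$ gives the power-law upper bound up to the factor $2^{\beta/(\alpha\beta-1)}$; on $[\tau_*, b]$ one checks directly that $f(\tau) \le 2^{1/\alpha} c \le 2^{1/\alpha} f(b)$ and that the right-hand side of the claimed bound is bounded below by a constant multiple of $c$ there, so absorbing everything into $Q_0 = Q_0(\alpha,\beta)$ closes the estimate.

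The main obstacle — and the only genuinely delicate point — is the upper bound near $\tau = b$, where the differential inequality degenerates (the right-hand side $(f^\alpha - c^\alpha)^\beta$ vanishes as $f \to c$) and the naive comparison with $y^{\alpha\beta}$ fails; the split-at-$\tau_*$ argument above, together with the observation that past $\tau_*$ the function $f$ is already pinned between $c$ and $2^{1/\alpha}c$ while the target upper bound is still of order $c$, is what rescues it, at the cost of the harmless multiplicative constant $Q_0$. A secondary technical point is justifying that $\tau_*$ is well-defined and that the two regimes glue, plus checking the endpoint value $f(a)$ enters the comparison solutions as the correct initial condition; these are routine. I would also remark that the hypothesis $\alpha\beta > 1$ is exactly what makes $1 - \alpha\beta < 0$, so that $y(a)^{1-\alpha\beta} + \text{const}\cdot(\tau - a)$ is increasing and its $\frac{1}{1-\alpha\beta}$ power is decreasing, matching the monotonicity of $f$; without it the formula would not make sense.
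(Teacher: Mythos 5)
Your overall plan is viable and genuinely different from the paper's proof (the paper substitutes $g(\tau)=(f(\tau)/f(b))^{1-\alpha\beta}$, compares with the exactly integrable ODE $\tilde g'=Q_1(\alpha\beta-1)f(b)^{\alpha\beta-1}(1-\tilde g^{1/\beta})^{\beta}$, and extracts the constant from the Beta function bound $F_\beta(x)\le\mathcal{B}(\beta,1-\beta)\,x$), and your lower bound and your estimate on $[a,\tau_*]$ with the factor $2^{\beta/(\alpha\beta-1)}$ are correct. However, the step you dismiss with ``one checks directly'' on $[\tau_*,b]$ is not a check; it is the crux, and as written it is a gap. Writing $c=f(b)$ and $h(\tau)=(f(a)^{1-\alpha\beta}+Q_1(\alpha\beta-1)(\tau-a))^{1/(1-\alpha\beta)}$, your claim that $h(\tau)\ge\kappa(\alpha,\beta)\,c$ on $[\tau_*,b]$ is equivalent to an upper bound $b-\tau_*\le C(\alpha,\beta)Q_1^{-1}c^{1-\alpha\beta}$ on the time spent in the terminal regime (the bound on $\tau_*-a$ you do get from the first regime, since $f(\tau_*)=2^{1/\alpha}c$). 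Nothing in your sketch supplies this: precisely because the right-hand side $(f^{\alpha}-c^{\alpha})^{\beta}$ degenerates as $f\to c$, you must rule out that $f$ lingers near the floor for a time much longer than $c^{1-\alpha\beta}$, during which $h(\tau)$ would fall far below $c$ while $f(\tau)\approx c$. The missing step can be supplied by integrating $-f'/(f^{\alpha}-c^{\alpha})^{\beta}\ge Q_1$ over the smooth pieces of $[\tau_*,b]$ and changing variables $u=f(\tau)$ (the downward jumps only shrink the $u$-range), which gives $Q_1(b-\tau_*)\le\int_{c}^{2^{1/\alpha}c}(u^{\alpha}-c^{\alpha})^{-\beta}\,du=I(\alpha,\beta)\,c^{1-\alpha\beta}$, finite exactly because $\beta<1$ --- the same integrability the paper encodes as $\mathcal{B}(\beta,1-\beta)<\infty$. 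With that inserted, your two-regime argument closes with $Q_0=Q_0(\alpha,\beta)$.

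A secondary but real error: your claim that the discontinuities make ``the lower-bound comparison likewise only easier'' is backwards. A downward jump of $f$ pushes $f^{1-\alpha\beta}$ up, which helps the upper bound on $f$ (this is what you use, correctly, in the first regime), but it can only hurt the stated lower bound on $f$: since the hypotheses constrain $f'$ only where $f$ is smooth, an arbitrarily large drop just after $a$ violates the left-hand inequality. So the lower-bound half of the conclusion should be proved (and used) for continuous $f$, or with a hypothesis controlling jump sizes; this matches how the lemma is actually applied, where the glued discontinuous function in the type 2 setting is only fed into the upper bound, while the lower bound is invoked for the smooth function of the type 1 setting.
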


\begin{proof}

We now briefly outline the steps of the proof. First, the lower bound presented in the lemma below can be obtained straightforwardly by integrating a suitable lower bound of $f'$. However, deriving the upper bound for $f$ is more intricate. Since a direct estimate from the bounds of $f'$ is not readily available, we construct an auxiliary ODE (\ref{eqn: ode_tilde_g_u}) that closely approximates the ODE providing a bound for the upper bound of $f'$, namely (\ref{eqn: ode_g_u}). We then estimate the solution $\widetilde g$ of (\ref{eqn: ode_tilde_g_u}) by integration. By applying a comparison argument, we establish a bound for $g$, which in turn implies the desired upper bound for $f$.

We first consider the lower bound of $f$. Firstly we have 
\[
\frac{d}{d\tau}f(\tau)^{1-\alpha\beta}=(\alpha\beta-1)(-f')f(\tau)^{-\alpha\beta}\leq Q_{2}(\alpha\beta-1),
\]
whenever $f$ is smooth. Thus, 
\begin{equation}
f(\tau)^{1-\alpha\beta}\leq f(a)^{1-\alpha\beta}+Q_{2}(\alpha\beta-1)(\tau-a).\label{eqn: upp_bdd_f_power}
\end{equation}
Hence, 
\[
f(\tau)\geq(f(a)^{1-\alpha\beta}+Q_{2}(\alpha\beta-1)(\tau-a))^{\frac{1}{1-\alpha\beta}}.
\]
Now, we compute the upper bound. Similar to the lower bound, we get
\[
\frac{d}{d\tau}f(\tau)^{1-\alpha\beta}=(\alpha\beta-1)(-f')f(\tau)^{-\alpha\beta}\geq Q_{1}(\alpha\beta-1)\left(1-\left(\frac{f(b)}{f(\tau)}\right)^{\alpha}\right)^{\beta},
\]
whenever $f$ is smooth.

We then define an auxiliary piecewise smooth function 
\[
g(\tau):=\left(\frac{f(\tau)}{f(b)}\right)^{1-\alpha\beta}
\]
which is strictly increasing on $[a,b]$ with $g(a)\in(0,1)$ and
$g(b)=1$. Moreover, 
\begin{equation}
\frac{dg}{d\tau}\geq Q_{1}(\alpha\beta-1)f(b)^{\alpha\beta-1}\left(1-g^{\frac{\alpha}{\alpha\beta-1}}\right)^{\beta}.\label{eqn: ode_g_u}
\end{equation}

Let $\wt{g}$ be the solution of the ODE 
\begin{equation}
\frac{d\tilde{g}}{d\tau}=Q_{1}(\alpha\beta-1)f(b)^{\alpha\beta-1}\left(1-\tilde{g}^{1/\beta}\right)^{\beta},\quad\quad\tilde{g}(a)=g(a).\label{eqn: ode_tilde_g_u}
\end{equation}

Since $\frac{\alpha}{\alpha\beta-1}>\frac{1}{\beta}$, from \eqref{eqn: ode_g_u}
and \eqref{eqn: ode_tilde_g_u} we know that $\tilde{g}'(a)<g'(a)$,
thus $\tilde{g}(\tau)<g(\tau)$ when $\tau$ is slightly larger than
$a$. In fact, we have $\tilde{g}(\tau)<g(\tau)$ for all $\tau\in(a,b)$.
This is because if $\tilde{g}(\tau)\geq g(\tau)$ for some $\tau$,
then we can define $\tau_{0}\in(a,b)$ to be the smallest $\tau$
with $\tilde{g}(\tau)=g(\tau)$. Since $\tilde{g}(\tau)<g(\tau)$
on $(a,\tau_{0})$, the condition $\tilde{g}(\tau_{0})=g(\tau_{0})$
implies $\tilde{g}'(\tau_{0})\geq g'(\tau_{0})$. On the other hand,
the condition $\tilde{g}(\tau_{0})=g(\tau_{0})$ considered with \eqref{eqn: ode_g_u}
and \eqref{eqn: ode_tilde_g_u} implies $\tilde{g}'(\tau_{0})<g'(\tau_{0})$,
deriving a contradiction. Thus $\wt{g}<g$ on $(a,b)$.

By \eqref{eqn: ode_tilde_g_u}, we have 
\[
\left(1-\tilde{g}^{1/\beta}\right)^{-\beta}\frac{d\tilde{g}}{d\tau}=Q_{1}(\alpha\beta-1)f(b)^{\alpha\beta-1}.
\]
Thus, for any $\tau\in[a,b]$, 
\begin{equation}
F_{\beta}(\tilde{g}(\tau))-F_{\beta}(\tilde{g}(a))=Q_{1}(\alpha\beta-1)f(b)^{\alpha\beta-1}(\tau-a),\label{eqn: F_b}
\end{equation}
where 
\[
F_{\beta}(x):=\int_{0}^{x}(1-y^{1/\beta})^{-\beta}\,dy.
\]
It is clear that $F_{\beta}$ is convex on $[0,1]$, thus $F'_{\beta}(x)\geq F'_{\beta}(0)=1$
for $x\in[0,1]$. Hence, for any $x\in[0,1]$, 
\begin{equation}
F_{\beta}(x)\geq x.\label{eqn: lower_bdd_F_b}
\end{equation}
On the other hand, since $F_{\beta}$ is convex and increasing, $F_{\beta}(x)/x$
is also increasing on $[0,1]$. Thus, for any $x\in(0,1)$ 
\begin{equation}
\frac{F_{\beta}(x)}{x}\leq F_{\beta}(1)=\int_{0}^{1}t^{\beta-1}(1-t)^{-\beta}\textcolor{red}{\beta} dt=\mathcal{B}(\beta,1-\beta)<\infty,\label{eqn: upper_bdd_F_b}
\end{equation}
where $\mathcal{B}$ is the beta function. By combining \eqref{eqn: F_b},
\eqref{eqn: lower_bdd_F_b}, and \eqref{eqn: upper_bdd_F_b}, we get
\[
\mathcal{B}(\beta,1-\beta)\tilde{g}(\tau)-\tilde{g}(a)\geq Q_{1}(\alpha\beta-1)f(b)^{\alpha\beta-1}(\tau-a).
\]
Hence 
\[
\mathcal{B}(\beta,1-\beta)f(\tau)^{1-\alpha\beta}-f(a)^{1-\alpha\beta}\geq Q_{1}(\alpha\beta-1)(\tau-a).
\]
Setting $Q_{0}:=\mathcal{B}(\beta,1-\beta)^{\frac{1}{\alpha\beta-1}}$,
we have 
\[
f(\tau)\leq Q_{0}(f(a)^{1-\alpha\beta}+Q_{1}(\alpha\beta-1)(\tau-a))^{\frac{1}{1-\alpha\beta}}.
\]
This completes the proof. 
\end{proof}
We are ready to prove Proposition \ref{prop: estm_x_psi}. 
\begin{proof}[Proof of Proposition \ref{prop: estm_x_psi}]
We will use $x$ to denote $x_{v}$ for simplicity. We will also
use $Q$ to denote a generic constant that may need to be updated;
this will be made clearer as they show up in the proof.

\textbf{Case 1: $v$ is a bouncing vector.} We will first prove the
lower bound for $x(\tau)$ when $\tau\in[0,\wt{t}\,]$. Noting that
$x'(\wt{t})=0$, by Lemma \ref{lem: grow_rate_x''_HD}, we have 
\[
x'(\tau)^{2}=-\int_{\tau}^{\wt{t}}2x'x''ds\leq\frac{2C_{1}}{m+1}\int_{\tau}^{\wt{t}}-x'x^{m+1}ds\leq\frac{2C_{1}}{(m+1)^{2}}(x(\tau)^{m+2}-x(\wt{t})^{m+2})
\]
and 
\[
x'(\tau)^{2}=-\int_{\tau}^{\wt{t}}2x'x''ds\geq\frac{C_{2}}{2m+2}\int_{\tau}^{\wt{t}}-x'x^{m+1}ds\geq\frac{C_{2}}{2(m+2)^{2}}(x(\tau)^{m+2}-x(\wt{t})^{m+2}).
\]
By taking $\alpha=m+2,\beta=1/2,f(a)=R$ in Lemma \ref{lem: estm_sol_ODE_discont},
we know that there exists $Q$ independent of $v,t$ such that for
any $\tau\in[0,\tilde{t}]$, 
\[
Q^{-1}(\tau+1)^{-\frac{2}{m}}\leq x(\tau)\leq Q(\tau+1)^{-\frac{2}{m}}.
\]

For $\phi$, recall that $x'=\sin\phi$. Thus $(\sin\phi)'=x''$.
By Lemma \ref{lem: grow_rate_x''_HD} we have 
\[
Q^{-1}(\tau+1)^{-\frac{2(m+1)}{m}}\leq\frac{C_{2}}{4m+4}x^{m+1}\leq(\sin\phi)'\leq\frac{C_{1}}{m+1}x^{m+1}\leq Q(\tau+1)^{-\frac{2(m+1)}{m}}.
\]
Taking the integral on $[\tau,\tilde{t}]$, we get 
\[
Q^{-1}[(\tau+1)^{-\frac{m+2}{m}}-(\tilde{t}+1)^{-\frac{m+2}{m}}]\leq\sin\phi(\tau)\leq Q[(\tau+1)^{-\frac{m+2}{m}}-(\wt{t}+1)^{-\frac{m+2}{m}}].
\]
Since $\sin\phi/\phi\in[2/\pi,1]$, we have the same bounds for $\phi$.

\textbf{Case 2: $v$ is an asymptotic vector.} It is not hard to see
that the above argument is valid when $\wt t=\infty.$

\textbf{Case 3: $v$ is a crossing vector.} Denote by $a(t):=-\sin\phi(t)$.
By Lemma \ref{lem: grow_rate_x''_HD}, we know that whenever $0\leq x\leq R$,
\[
\frac{C_{2}}{4m+4}x^{m+1}\leq x''\leq\frac{C_{1}}{m+1}x^{m+1}.
\]
Thus 
\[
\frac{C_{2}}{4m+4}(-x'x^{m+1})\leq-x'x''\leq\frac{C_{1}}{m+1}(-x'x^{m+1}).
\]
Taking integral, we get 
\begin{equation}
\frac{C_{2}}{(2m+2)^{2}}x^{m+2}\leq a^{2}-a(t_{0})^{2}\leq\frac{2C_{1}}{(m+1)^{2}}x^{m+2}.\label{eqn: bdd_x'_crossing-1}
\end{equation}
Hence 
\[
\frac{(m+1)^{2}}{2C_{1}}(a^{2}-a(t_{0})^{2})\leq x^{m+2}\leq\frac{(2m+2)^{2}}{C_{2}}(a^{2}-a(t_{0})^{2})
\]
Since $x'=-a$, we have $a'=-x''$. Thus there exists $C>c>0$ independent
of $v,t$ such that 
\[
-C(a^{2}-a(t_{0})^{2})^{\frac{m+1}{m+2}}\leq a'\leq-c(a^{2}-a(t_{0})^{2})^{\frac{m+1}{m+2}}.
\]
Setting $\alpha=2,\beta=\frac{m+1}{m+2}$ in Lemma \ref{lem: estm_sol_ODE_discont},
we have 
\begin{equation}
(a(0)^{-\frac{m}{m+2}}+C(\tau-a))^{-\frac{m+2}{m}}\leq a(\tau)\leq Q_{0}(a(0)^{-\frac{m}{m+2}}+c(\tau-a))^{-\frac{m+2}{m}}\label{eqn: bdd_a_crossing}
\end{equation}
Since $x_{v}(0)=R$ and $a(0)>0$, by compactness, $a(0)$ has a uniform
lower bound depending on $R$. Thus, we have 
\[
Q^{-1}(\tau+1)^{-\frac{m+2}{m}}\leq|\phi(\tau)|\leq Q(\tau+1)^{-\frac{m+2}{m}}.
\]
Similar to Case 1, the bound of $x$ comes from $x(\tau)=\int_{\tau}^{t_{0}}a(s)ds$
and \eqref{eqn: bdd_a_crossing}. 
\end{proof}
\begin{rem}
Note that we did not use the full strength of Lemma \ref{lem: estm_sol_ODE_discont}
in the above proof; that is, $x(\tau)$ had no discontinuities. The
next section will make similar use of Lemma \ref{lem: estm_sol_ODE_discont}
applied to a function with discontinuities. 
\end{rem}

\section{Estimates for Type 2 surfaces\label{sec: GW surface}}

In this section, we consider a different setting considered by Gerber
and Niţică \cite{gerber1999ETDS} as well as Gerber and Wilkinson
\cite{gerber1999holder} where $M=S$ is a complete nonpositively
curved surface, and $T_{0}$ is a closed geodesic of some length $\gamma_{0}$
on which the Gaussian curvature $K$ vanishes to order $m-1$. Namely,
if $(s,x)$ are the Fermi coordinates along $\widetilde{T}_{0}$,
there exists $C_{1},C_{2},\ep>0$ such that 
\begin{equation}
-C_{1}|x|^{m}\leq K(s,x)\leq0\label{eq: nonuniform-1}
\end{equation}
for all $|x|<\ep$ and for all $s\in\R$ and 
\begin{equation}
-C_{1}|x|^{m}\leq K(s,x)\leq-C_{2}|x|^{m}\label{eq: nonuniform-2}
\end{equation}
for all $|x|<\ep$ and  $s\in\wt{L}$ which is the lift of some interval $L\subset T_0$. To simplify the argument, whenever applicable, we will adopt the notation for the Riemannian metric $g$ specified for a surface introduced in Remark \ref{rem: surface_metric}. 
\begin{rem}
Compared to the assumption in the previous section, the underlying
manifold considered in this section is 2-dimensional, and the curvature
assumption near $T_{0}$ is weakened: the neighborhood of only a small
subset $L$ of $T_{0}$ is assumed to satisfy the uniform curvature
bound as in \eqref{eq: uniform curvature control}.

On the complement of $L$ in $T_{0}$ and its neighborhood, only the
trivial upper bound (i.e., zero) is imposed on the curvature. Despite
the weaker assumption on the curvature, the low dimensionality of
the manifold enables us to do a finer analysis to prove the similar estimates
\eqref{eq: main} on $x_{w}$ and $\phi_{w}$ for $w=\Pi_{t,R}(\v)$.
Furthermore, unlike in the previous section where $R$ from the definition
\eqref{eq: Pi} of the shadowing map $\Pi_{t,R}$ had to be carefully
chosen, this setting is less sensitive to the choice of $R$. 
\end{rem}

\begin{rem}
One can continue using techniques developed in the previous section
to study bouncing, asymptotic, and crossing vectors. However, the geometric potential estimates were well studied in the surface setting in \cite{gerber1999holder}. Without deviating from the main goal
and to simplify the argument in this section, we will only focus on
shadowing vectors $w=\Pi_{t,R}(\v)$. 
\end{rem}

The goal of this section is to show that under this different set
of assumptions, the shadowing vector $w=\Pi_{t,R}(\v)$ satisfies
the estimates in $x_{w}$ and $\phi_{w}$ as claimed in \eqref{eq: main}.
Recalling that $\gamma_{0}$ is the length of $T_{0}$, we state it
as a proposition below, which is the analog of Proposition \ref{prop: estm_x_psi}. 
\begin{prop}
\label{prop: estm_x_psi_GW} There exists $Q=Q(R)>1$ independent
of $t$ such that for any shadowing vector $w=\Pi_{t,R}(\v)$ and
$\tau\in[0,\wt{t}]$ we have 
\[
Q^{-1}(\tau+1)^{-\frac{2}{m}}\leq x_{w}(\tau)\leq Q(\tau+1)^{-\frac{2}{m}},
\]
and 
\[
|\phi_{w}(\tau)|\leq Q[(\tau+1)^{-\frac{m+2}{m}}-(\wt{t}+1)^{-\frac{m+2}{m}}],
\]
and for any $\tau\in[0,\wt{t}-2\sqrt{2}\gamma_{0}]$, 
\[
|\phi_{w}(\tau)|\geq Q^{-1}[(\tau+1)^{-\frac{m+2}{m}}-(\wt{t}+1)^{-\frac{m+2}{m}}].
\]
\end{prop}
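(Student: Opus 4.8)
The plan is to imitate the Riccati-comparison strategy of Section \ref{sec: higher dim}, but with the crucial caveat that the uniform lower curvature bound \eqref{eq: nonuniform-2} only holds over the small arc $\wt L$. The key geometric observation is that since $\widetilde T_0$ is a line of length $\gamma_0$ in the universal cover modulo the $\Z$-action, any geodesic segment $\gamma_w$ that travels a $\widetilde T_0$-horizontal distance $\geq \sqrt 2\,\gamma_0$ must pass over (the lift of) $L$ at least once — and since $\cos\phi_w \in [\tfrac{\sqrt2}{2},1]$ on $[0,t]$ by Remark \ref{rem: phi}, the horizontal speed $|\gamma'_{w,\perp}| = \cos\phi_w$ is comparable to $1$, so ``time $\tau$ elapsed'' and ``horizontal distance travelled'' are comparable up to the factor $\sqrt2$. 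First I would record, from \eqref{eqn: geod_eqn} and the one-sided bound \eqref{eq: nonuniform-1} together with Lemma \ref{lem: grow_rate_x''_GW} (the surface analogue of Lemma \ref{lem: grow_rate_x''_HD}, which the excerpt references but doesn't display), the \emph{global} inequality $0 \leq x_w'' \leq \tfrac{C_1}{m+1} x_w^{m+1}$ valid on all of $[0,t]$; this already gives convexity and, via the Lemma \ref{lem: estm_sol_ODE_discont} machinery with $\alpha = m+2$, $\beta = 1/2$, the \emph{upper} bound $x_w(\tau) \leq Q(\tau+1)^{-2/m}$ and the \emph{upper} bound on $|\phi_w(\tau)|$ exactly as in Case 1 of Proposition \ref{prop: estm_x_psi}. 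So two of the three asserted inequalities come for free from the one-sided bound; only the lower bound on $x_w$ (and the lower bound on $|\phi_w|$, restricted to $[0,\wt t - 2\sqrt2\gamma_0]$) genuinely needs the partial uniform bound \eqref{eq: nonuniform-2}.

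For the lower bound, the idea is to get a differential inequality for $x_w$ that is valid \emph{on average} rather than pointwise. Fix $\tau \in [0,\wt t - 2\sqrt2\gamma_0]$. I would partition $[\tau,\wt t]$ into consecutive blocks of horizontal-length $\gamma_0$; on each such block the geodesic crosses a full fundamental period of $\widetilde T_0$, hence spends horizontal-length $\geq \gamma_1$ (a fixed fraction of $\gamma_0$) above $\wt L$, where the uniform bound $x_w'' \geq c\, x_w^{m+1}$ holds by \eqref{eq: nonuniform-2} and Lemma \ref{lem: grow_rate_x''_GW}. Converting from horizontal-length to $\tau$ via $\cos\phi_w\in[\tfrac{\sqrt2}{2},1]$ and using that $x_w$ is decreasing and convex on $[0,\wt t]$, I would integrate: $x_w'(\tau)^2 = -\int_\tau^{\wt t} 2 x_w' x_w'' \, ds \geq c' \int_{\{s\in[\tau,\wt t]\,:\,\gamma_w(s)\text{ over }\wt L\}} (-x_w') x_w^{m+1}\, ds$, and the ``over $\wt L$'' portion contributes at least a fixed fraction $\gamma_1/(\sqrt2\,\gamma_0)$ of the full integral $\int_\tau^{\wt t}(-x_w')x_w^{m+1}\,ds$ — here one uses that on each period the value of $x_w^{m+1}$ over $\wt L$ is comparable (within a bounded factor) to its values elsewhere on that period, since $x_w$ changes by a bounded multiplicative factor over any horizontal-length-$\gamma_0$ interval (this in turn follows from the already-established two-sided comparison $x_w(\tau)\approx(\tau+1)^{-2/m}$ for... wait, we only have the upper bound so far, so instead use: $|x_w(s_1)-x_w(s_2)|\leq \sup|x_w'|\cdot|s_1-s_2|$ and $|x_w'|\leq$ something small once $\tau$ is past the first period, making $x_w$ essentially constant on each period). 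This yields $x_w'(\tau)^2 \geq c''\,(x_w(\tau)^{m+2} - x_w(\wt t)^{m+2})$, and then Lemma \ref{lem: estm_sol_ODE_discont} — applied to $f = x_w$ on $[\tau_0, \wt t]$ where $\tau_0$ is the first time the geodesic has completed two periods — delivers $x_w(\tau)\geq Q^{-1}(\tau+1)^{-2/m}$; the finitely many initial periods are absorbed into the constant $Q$ by compactness (using $x_w(0) = R$).

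Finally, for the lower bound on $|\phi_w|$ on $[0,\wt t - 2\sqrt2\gamma_0]$: since $\sin\phi_w = x_w'$ and $(\sin\phi_w)' = x_w''$, I would integrate $x_w'' \geq c\, x_w^{m+1}$ over those $s\in[\tau,\wt t]$ lying above $\wt L$, using the just-proved lower bound $x_w(s)\geq Q^{-1}(s+1)^{-2/m}$ to get $x_w''(s)\geq Q^{-1}(s+1)^{-2(m+1)/m}$ there, and the same ``fixed fraction of each period'' counting to conclude $|\sin\phi_w(\tau)| = -x_w'(\tau) = \int_\tau^{\wt t} x_w''(s)\,ds \geq Q^{-1}[(\tau+1)^{-(m+2)/m} - (\wt t+1)^{-(m+2)/m}]$, where the $2\sqrt2\gamma_0$ slack is precisely what guarantees the integration interval $[\tau,\wt t]$ is long enough to contain a full period's worth of $\wt L$-time and to make the period-averaging argument go through; then $\sin\phi_w/\phi_w \in [2/\pi,1]$ transfers the bound to $\phi_w$. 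The main obstacle is the bookkeeping in this period-averaging step — making precise that the ``mass'' of $\int(-x_w')x_w^{m+1}$ contributed over $\wt L$ is a definite fraction of the total, which requires controlling the multiplicative oscillation of $x_w$ over one horizontal period uniformly in $\tau$ and $t$; everything else is a routine reprise of Section \ref{sec: higher dim}.
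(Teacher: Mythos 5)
There is a genuine gap: the directional bookkeeping is inverted, and the step you call ``free'' is exactly the hard half of the proposition. An upper bound $0\le x_w''\le C_0x_w^{m+1}$ (which is all the one-sided curvature bound \eqref{eq: nonuniform-1} gives) controls how \emph{fast} $x_w$ can fall, so via the energy inequality $x_w'(\tau)^2\le \tfrac{2C_0}{m+2}\bigl(x_w(\tau)^{m+2}-x_w(\wt t)^{m+2}\bigr)$ and the $Q_2$-half of Lemma \ref{lem: estm_sol_ODE_discont} it yields the \emph{lower} bound $x_w(\tau)\ge Q^{-1}(\tau+1)^{-2/m}$, not the upper one. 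The \emph{upper} bound on $x_w$ requires a lower bound on $x_w''$, i.e.\ the uniform bound \eqref{eq: nonuniform-2}, which only holds over $\wt L$ --- that is precisely the part that needs the period decomposition, and it cannot come from the one-sided bound (with $x''\equiv 0$ off $\wt L$ the orbit could linger near height $R$ for an arbitrarily long time, violating $x_w(\tau)\le Q(\tau+1)^{-2/m}$). The asserted upper bound on $|\phi_w|$ is not free either: the paper's argument $\phi_w'\le\sqrt2\,x_w''\le\sqrt2\,C_0x_w^{m+1}\le Q(\tau+1)^{-2(m+1)/m}$ needs the (hard) upper bound on $x_w$ before integrating from $\tau$ to $\wt t$. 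Symmetrically, your ``lower bound'' derivation is misassembled: the period-averaged inequality $x_w'(\tau)^2\ge c''\bigl(x_w(\tau)^{m+2}-x_w(\wt t)^{m+2}\bigr)$ is a \emph{lower} bound on $-x_w'$, which in Lemma \ref{lem: estm_sol_ODE_discont} is the $Q_1$-half and delivers the \emph{upper} bound on $x_w$; it cannot deliver $x_w\ge Q^{-1}(\tau+1)^{-2/m}$, since forcing faster decay can only push $x_w$ down.

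Once the directions are swapped, your skeleton does match the paper's actual proof: the easy half is the lower bound on $x_w$ from the global upper bound on $x_w''$ (Lemma \ref{lem: grow_rate_x''_GW}), the hard half is the upper bound on $x_w$ obtained by showing the integral of $-x'x^{m+1}$ over the $\wt L$-portions is a definite fraction of the whole (the paper's Lemmas \ref{lem: dist_conv} and \ref{lem: ODE_half_int}, fed into Lemma \ref{lem: estm_sol_ODE_discont} as a piecewise function with discontinuities), and the lower bound on $|\phi_w|$ on $[0,\wt t-2\sqrt2\gamma_0]$ is the period-counting argument you sketch at the end. Two further cautions even after relabeling: your justification that ``$x_w$ is essentially constant on each horizontal period'' (via smallness of $|x_w'|$) is both circular, as you noticed mid-sentence, and insufficient when $x_w$ itself is small, where a small additive change can be a large multiplicative one; the paper instead compares $[t_i'',t_i']$ with $I_i'$ using only convexity and monotonicity of $x_w$ (so $-x'x^{m+1}$ is pointwise larger on the earlier interval) together with the bound $\min_i|I_i|\ge\frac1N\max_i|I_i'|$, which avoids any a priori two-sided control on $x_w$. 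Also, for the $|\phi_w|$ lower bound, you do need the already-established \emph{lower} bound $x_w(s)\ge Q^{-1}(s+1)^{-2/m}$ (the easy half), which in your write-up was the piece you postponed, so the logical order must be: lower bound on $x_w$ first, then upper bound on $x_w$, then the two $\phi_w$ bounds.
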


%\begin{rem}
%We do not expect the lower bound of $|\phi_{w}(\tau)|$ to hold for
%all $\tau\in[0,\wt{t}\,]$ since we have little control of the metric
%near $\gamma(\wt{t})$. For instance, when the metric near $\gamma(\wt{t})$
%is isometric to the surface of revolution of $1+x^{2m}$, by Proposition
%3.1 we know that $\phi_{w}(\tau)$ is of the same scale as $(\tau+1)^{-\frac{m+1}{m}}-(\wt{t}+1)^{-\frac{m+1}{m}}$
%when $\tau$ is near $\wt{t}$. 
%\end{rem}

To prove the proposition, we need to exploit the assumptions
on $K(s,x)$ and establish a few auxiliary lemmas. Consider any $w=\Pi_{t,R}(\v)$
for some $t,R>0$ and $\v\in\Sing$. Recall that $\wt{t}\in[0,t]$
is the smallest number in which $x_{w}(\tau)$ attains the minimum.
We can decompose $[0,\wt{t}]$ into subintervals $I_{i}=[t_{i},t'_{i}]$
and $I'_{i}=(t'_{i},t_{i+1})$ for $i=0,1,\cdots,n$ so that 
\[
s(\gamma_{w}(\tau))\in\wt{L}\text{ when }\tau\in I_{i}
\]
and 
\[
s(\gamma_{w}(\tau))\notin\wt{L}\text{ when }\tau\in I'_{i};
\]
see Figure \ref{fig3}. Notice that $\wt{t}\geq t_{n}$, thus $I_{n}$
is not empty (though it may be arbitrarily short), but $I'_{n}$ may
be empty.

\begin{figure}[h]
\centering \includegraphics[width=12cm]{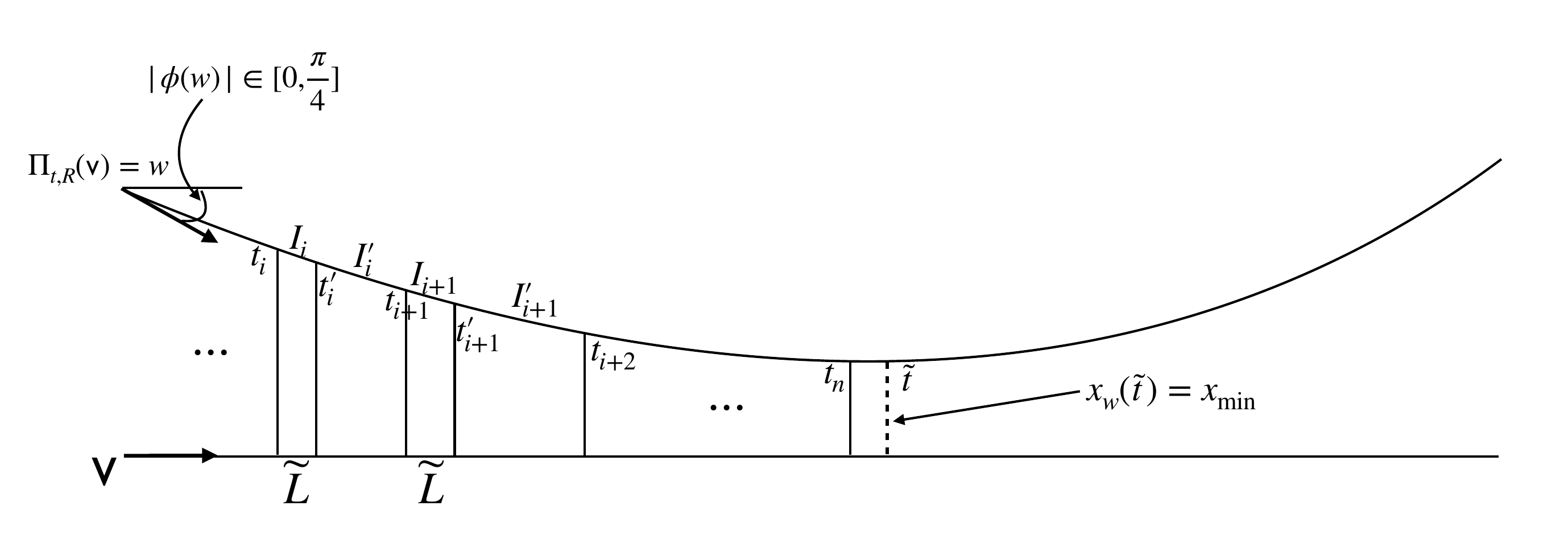} \caption{Division into $I_{i}$ and $I_{i}'$.}
\label{fig3} 
\end{figure}

Since the angle $\phi_{w}(\tau)$ satisfies $|\phi_{w}(\tau)|\in[0,\pi/4]$
for any $\tau\in[0,t]$ from Lemma \ref{lem: angle_lower_bdd_reg},
there exists $N\in\mathbb{N}$ so that 
\[
\min\limits _{i}|I_{i}|\geq\frac{1}{N}\max\limits _{i}|I'_{i}|
\]
where the maximum is taken over all possible $i$ and the minimum
is taken over $i$ in $\{1,\ldots,n\}$ if $\wt{t}\in I_{n}'$ (i.e.
$I_{n}'$ is nonempty) or else (i.e. $\wt{t}\in I_{n}$ and $I_{n}'$
is empty) in $\{1,\ldots,n-1\}$ in order to exclude $|I_{n}|$ which
could be arbitrarily small.

The following lemma shows $x_{w}''(\tau)$ admits a similar bound
as in Lemma \ref{lem: grow_rate_x''_HD} when $\tau$ belongs to $I_{i}$
for some $i$. 
\begin{lem}
\label{lem: grow_rate_x''_GW} There exists $C_{0}>0$ independent
of $w$ such that 
\[
0\leq x_{w}''\leq C_{0}x_{w}^{m+1}
\]
for all $\tau\in[0,\wt{t}\,]$. Moreover, there exists $c_{0}\in(0,C_{0})$
such that whenever $\tau\in I_{i}$ for some $i$, we also have the
lower bound 
\[
x_{w}''\geq c_{0}x_{w}^{m+1}.
\]
\end{lem}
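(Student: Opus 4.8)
The plan is to exploit Lemma \ref{lem: geod_eqn_HD} (specialized to the surface case via Remark \ref{rem: surface_metric}), which in the surface setting reads $x_w'' = \lambda(s,x_w)\cos^2\phi_w$ where $\lambda(s,x) = G_x/G$ is the second fundamental form. First I would note that by Lemma \ref{lem: angle_lower_bdd_reg}(1) the function $x_w$ is convex, and by Lemma \ref{lem: grow_rate_x''_HD}-type reasoning (or directly from the construction of $\Pi_{t,R}$ together with $T_0$ being totally geodesic, i.e. $\lambda(s,0)\equiv 0$ and $K\leq 0$ forcing $\lambda\geq 0$ for $x\geq 0$) one gets $x_w'' \geq 0$ on all of $[0,\widetilde t\,]$; this gives the lower bound $0\leq x_w''$ with no extra hypothesis. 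The key point is to integrate the Riccati equation $\lambda' = K - \lambda^2$ (here $'$ is $\partial/\partial x$, and $K=-G_{xx}/G$) out from $x=0$ where $\lambda(s,0)=0$: since $K\leq 0$ we immediately obtain $\lambda(s,x) \leq 0$ is false — rather, from $\lambda' = -|K| - \lambda^2 \leq 0$... so I must be careful with signs. The correct bookkeeping: with the second fundamental form convention giving $x_w''\geq 0$, one integrates $|K|$ against $x$ and uses $-C_1|x|^m \leq K$ (valid everywhere by \eqref{eq: nonuniform-1}) to get the upper bound $\lambda(s,x)\leq \tfrac{C_1}{m+1}x^{m+1}$ exactly as in Lemma \ref{lem: ric_sol_comp}, hence $x_w'' = \lambda\cos^2\phi_w \leq \lambda \leq C_0 x_w^{m+1}$ with $C_0$ depending only on $C_1,m$. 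This handles the global bound $0\leq x_w'' \leq C_0 x_w^{m+1}$ on $[0,\widetilde t\,]$.

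For the lower bound on the subintervals $I_i$, the idea is that when $\tau\in I_i$ we have $s(\gamma_w(\tau))\in\widetilde L$, so the \emph{two-sided} bound \eqref{eq: nonuniform-2} applies: $K(s,x)\leq -C_2|x|^m$ there. I would then run the comparison argument of Lemma \ref{lem: ric_sol_comp} to get a lower bound $\lambda(s,x)\geq \tfrac{C_2}{2(m+1)}x^{m+1}$ for $s\in\widetilde L$ and $|x|$ small (using the Eschenburg comparison theorem \cite{eschenburg1990comparison} as in Lemma \ref{lem: grow_rate_x''_HD}, or a direct one-dimensional estimate since the surface case is scalar). Combined with $\cos^2\phi_w \geq 1/2$ from Remark \ref{rem: phi}, this yields $x_w'' \geq c_0 x_w^{m+1}$ for $\tau\in I_i$ with $c_0 = C_2/(4(m+1))$.

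The main subtlety — and the step I expect to be most delicate — is making sure the comparison argument for $\lambda$ is legitimately local in $s$: the Riccati ODE for $\lambda(s,\cdot)$ along the $x$-direction at a \emph{fixed} $s$ only sees the curvature along the ray $\{s\}\times[0,x]$, and since $s\in\widetilde L$ means that entire ray stays in the region where \eqref{eq: nonuniform-2} holds, the scalar comparison is valid purely from the one-variable ODE $\lambda' = K - \lambda^2$, $\lambda(s,0)=0$ — so in fact no Eschenburg-type machinery is needed, just the elementary estimate already proved in Lemma \ref{lem: ric_sol_comp} with $n-1=1$. I would also remark that one must restrict to $R$ small enough (below the $R_0(C_1,m)$ and $R_0(C_2,m)$ thresholds from Lemma \ref{lem: ric_sol_comp}) so that the bootstrap $\lambda' = K-\lambda^2 \geq \tfrac{C_2}{2}x^m$ closes; this is where the phrase ``$R$ sufficiently small'' in Definition \ref{def:bouncing_vec} and the surrounding discussion gets used, and it is exactly analogous to the choice of $R_1$ in Lemma \ref{lem: grow_rate_x''_HD}.
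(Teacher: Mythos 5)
Your proposal is correct in substance, but be careful with the Riccati equation: with $\lambda=G_x/G$ and $K=-G_{xx}/G$ one has $\lambda'=-K-\lambda^{2}$ (i.e.\ $\lambda'+\lambda^{2}+K=0$), not $\lambda'=K-\lambda^{2}$; your eventual bookkeeping (bounding $\lambda'\leq -K\leq C_{1}x^{m}$ for the upper bound, and $\lambda'\geq C_{2}x^{m}-\lambda^{2}\geq \tfrac{C_{2}}{2}x^{m}$ on $\wt L$ after absorbing $\lambda^{2}$) is the right one, so this is only a slip in writing, not a gap. Where you differ from the paper is the choice of ODE: the paper never passes to the Riccati equation for $\lambda$ but instead integrates the linear equation $G_{xx}=-KG$ directly in $x$ at fixed $s$, using $G(s,0)\equiv1$, $G_{x}(s,0)=0$ and $1\leq G\leq 2$ near $\widetilde T_{0}$, which yields $G_{x}\leq \tfrac{2C_{1}}{m+1}x^{m+1}$ everywhere and $G_{x}\geq \tfrac{C_{2}}{m+1}x^{m+1}$ when $s\in\wt L$, and then concludes via \eqref{eqn: geod_eqn} and the angle bound $\cos^{2}\phi_{w}\geq\tfrac12$ from Remark \ref{rem: phi}, exactly as you do in the last step. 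The linear route has the small advantage that no bootstrap is needed, hence no additional quantitative smallness of $R$ of the form $R\leq R_{0}(C_{1},C_{2},m)$ as in Lemma \ref{lem: ric_sol_comp} and Lemma \ref{lem: grow_rate_x''_HD}; this is precisely why the paper remarks that the type 2 setting is less sensitive to the choice of $R$, whereas your Riccati bootstrap reintroduces that restriction (harmless here, since $R$ may always be shrunk, but worth noting). Your observation that the lower bound is legitimately local in $s$ — the ray $\{s\}\times[0,x_{w}]$ stays where \eqref{eq: nonuniform-2} holds whenever $s\in\wt L$, so no Eschenburg-type comparison is needed — is correct and is implicitly the same point underlying the paper's fixed-$s$ integration of $G_{xx}$.
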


\begin{proof} In what follows, we adopt the notation from Remark \ref{rem: surface_metric}. Since $G_{xx}=-KG\leq2C_{1}x_{w}^{m}$ with $G_{x}(s,0)=0$ for all
$s$, thus $G_{x}(s,x_{w})\leq2C_{1}(m+1)^{-1}x_{w}^{m+1}$. Let $C_{0}:=2C_{1}(m+1)^{-1}$.
By \eqref{eqn: geod_eqn}, 
\[
x''_{w}=\frac{G_{x}}{G}\cos^{2}\phi_{w}\leq\frac{G_{x}}{G}\leq C_{0}x_{w}^{m+1}.
\]
For $t\in I_{i}$, we have $G_{xx}=-KG\geq C_{2}x_{w}^{m}$, thus
$G_{x}(s,x_{w})\geq C_{2}(m+1)^{-1}x_{w}^{m+1}$. Let $c_{0}:=C_{2}(2m+2)^{-1}$.
By Lemma \ref{lem: angle_lower_bdd_reg} (1), we have 
\[
x''_{w}=\frac{G_{x}}{G}\cos^{2}\phi_{w}\geq\frac{G_{x}}{2G}\geq c_{0}x_{w}^{m+1}.
\]
This completes the proof. 
\end{proof}
For the following lemma we let ${\displaystyle t''_{i}:=\frac{t_{i}+t'_{i}}{2}}$.
For simplicity, in the remaining part of this section, we abbreviate
$x=x_{w}$ and $\phi=\phi_{w}$ where $w=\Pi_{t,R}(\v)$ for any $\v\in\Sing$
and $t,R>0$. 
\begin{lem}
\label{lem: dist_conv} For any $i$ with $|I_{i}|\geq\gamma_{1}$
(namely, those $I_{i}$ not containing 0 or $\wt{t}$), we have

\begin{enumerate}
\item ${\displaystyle \int_{t''_{i}}^{t'_{i}}-x'x^{m+1}ds\geq\frac{1}{2N+1}\int_{t''_{i}}^{t_{i+1}}-x'x^{m+1}ds.}$ 
\item ${\displaystyle \int_{\tau}^{t'_{i}}-x'x^{m+1}ds\geq\frac{1}{2N+1}\int_{\tau}^{t_{i+1}}-x'x^{m+1}ds}$
for any $\tau\in[t_{i},t''_{i}]$. 
\end{enumerate}
\end{lem}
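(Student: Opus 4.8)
The plan is to reduce both assertions to the single elementary fact that the integrand
\[
h(s):=-x'(s)\,x(s)^{m+1}
\]
is nonnegative and \emph{nonincreasing} on $[0,\wt{t}\,]$. Indeed, on this interval $x=x_{w}>0$ (as $w$ is a bouncing vector) and $x'=x'_{w}\le 0$, so $-x'\ge 0$, $x^{m+1}\ge 0$, and $x^{m+1}$ is nonincreasing; moreover $-x'$ is nonincreasing because $x''_{w}\ge 0$ on $[0,\wt{t}\,]$ by Lemma \ref{lem: grow_rate_x''_GW}. A product of two nonnegative nonincreasing functions is nonincreasing, which gives the claim. It then suffices to prove (2), since (1) is its special case $\tau=t''_{i}$.

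For (2), fix $i$ with $|I_{i}|\ge\gamma_{1}$ and $\tau\in[t_{i},t''_{i}]$, and split
\[
\int_{\tau}^{t_{i+1}}-x'x^{m+1}\,ds=\int_{\tau}^{t'_{i}}h\,ds+\int_{t'_{i}}^{t_{i+1}}h\,ds,
\]
so that the inequality to prove is equivalent to $\int_{t'_{i}}^{t_{i+1}}h\,ds\le 2N\int_{\tau}^{t'_{i}}h\,ds$. Using that $h$ is nonincreasing, that $t'_{i}-t''_{i}=|I_{i}|/2$, and that $[t''_{i},t'_{i}]\subset[\tau,t'_{i}]$, I would bound
\[
\int_{\tau}^{t'_{i}}h\,ds\ \ge\ \int_{t''_{i}}^{t'_{i}}h\,ds\ \ge\ h(t'_{i})\,\tfrac{|I_{i}|}{2},\qquad\qquad \int_{t'_{i}}^{t_{i+1}}h\,ds\ \le\ h(t'_{i})\,|I'_{i}|.
\]
By the choice of $N$ one has $|I'_{i}|\le\max_{j}|I'_{j}|\le N\min_{j}|I_{j}|\le N|I_{i}|$, where the last inequality holds precisely because $|I_{i}|\ge\gamma_{1}$ puts $I_{i}$ among the intervals over which the minimum defining $N$ is taken. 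Hence $\int_{t'_{i}}^{t_{i+1}}h\,ds\le N\,h(t'_{i})|I_{i}|=2N\cdot h(t'_{i})\tfrac{|I_{i}|}{2}\le 2N\int_{\tau}^{t'_{i}}h\,ds$; adding $\int_{\tau}^{t'_{i}}h\,ds$ to both sides gives $\int_{\tau}^{t_{i+1}}h\,ds\le(2N+1)\int_{\tau}^{t'_{i}}h\,ds$, which is (2), and setting $\tau=t''_{i}$ recovers (1).

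The only point that needs a little care is the combinatorial comparison $|I'_{i}|\le N|I_{i}|$, which rests on $I_{i}$ being one of the intervals appearing in the minimum that defines $N$; this is exactly what the hypothesis $|I_{i}|\ge\gamma_{1}$ guarantees, the short boundary intervals of the decomposition being deliberately excluded from that minimum. Otherwise the argument is purely formal: the curvature hypotheses \eqref{eq: nonuniform-1}--\eqref{eq: nonuniform-2} enter only through $x''_{w}\ge 0$ in Lemma \ref{lem: grow_rate_x''_GW}, and not through the sharper bound $x''_{w}\ge c_{0}x_{w}^{m+1}$ available on each $I_{i}$, so I anticipate no real obstacle.
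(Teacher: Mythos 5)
Your proof is correct and is essentially the paper's argument: both rest on the monotonicity of $-x'x^{m+1}$ on $[0,\wt t\,]$ (from $x>0$, $x'\le 0$, $x''\ge 0$) together with the length comparison $|I'_i|\le N|I_i|$ coming from the definition of $N$, which the hypothesis $|I_i|\ge\gamma_1$ makes applicable exactly as in the paper. The only differences are cosmetic: you prove (2) directly using the pivot value $h(t'_i)$ and recover (1) as the case $\tau=t''_i$, whereas the paper proves (1) by subdividing $I'_i$ into $2N$ subintervals and then deduces (2).
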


\begin{proof}
For (1) consider any $s\in[t''_{i},t'_{i}]$ and $s'\in[t'_{i},t_{i+1}]$.
Since $x$ is convex and decreasing, we have $-x'(s)\geq-x'(s')$
and $x(s)\geq x(s')$. In particular, 
\[
-x'(s)x(s)^{m+1}\geq-x'(s')x(s')^{m+1}.
\]
Since ${\displaystyle \min_{i}|I_{i}|\geq\frac{1}{N}\max_{i}|I'_{i}|}$,
we can divide $I'_{i}$ into $2N$ subintervals of the same length.
The integral on each subinterval, whose length is at most $|I_{i}|/2$,
is no more than that on $[t''_{i},t'_{i}]$. Hence 
\begin{eqnarray*}
\int_{t''_{i}}^{t_{i+1}}-x'x^{m+1}ds & = & \int_{t''_{i}}^{t'_{i}}-x'x^{m+1}ds+\int_{t'_{i}}^{t_{i+1}}-x'x^{m+1}ds\\
 & \leq & \int_{t''_{i}}^{t'_{i}}-x'x^{m+1}ds+2N\int_{t''_{i}}^{t'_{i}}-x'x^{m+1}ds\\
 & = & (1+2N)\int_{t''_{i}}^{t'_{i}}-x'x^{m+1}ds.
\end{eqnarray*}

For (2), by (1), we have 
\begin{eqnarray*}
\int_{\tau}^{t'_{i}}-x'x^{m+1}ds & = & \int_{\tau}^{t''_{i}}-x'x^{m+1}ds+\int_{t''_{i}}^{t'_{i}}-x'x^{m+1}ds\\
 & \geq & \int_{\tau}^{t''_{i}}-x'x^{m+1}ds+\frac{1}{2N+1}\int_{t''_{i}}^{t_{i+1}}-x'x^{m+1}ds\\
 & \geq & \frac{1}{2N+1}\int_{\tau}^{t_{i+1}}-x'x^{m+1}ds.
\end{eqnarray*}
\end{proof}
\begin{lem}
\label{lem: ODE_half_int} There exists $C_{3}>0$ such that for any
$i$ and $\tau\in[t_{i},t''_{i}]$, 
\[
x'(\tau)\leq-2C_{3}\sqrt{x(\tau)^{m+2}-x(\,\wt{t}\,)^{m+2}}.
\]
\end{lem}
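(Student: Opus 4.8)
The plan is to run the same energy-integral argument as in Case~1 of the proof of Proposition \ref{prop: estm_x_psi}, but localized to the interval $[t_i,t''_i]$, using the lower bound $x''\geq c_0 x^{m+1}$ from Lemma \ref{lem: grow_rate_x''_GW} which is valid precisely on $I_i=[t_i,t'_i]$, together with the combinatorial redistribution inequality from Lemma \ref{lem: dist_conv}(2) to pass from an integral over $[\tau,t'_i]$ to one over $[\tau,t_{i+1}]$. First I would fix $\tau\in[t_i,t''_i]$ and write, using $x'(\,\wt{t}\,)=0$ and $x'\le 0$ on $[0,\wt t\,]$,
\[
x'(\tau)^2 = -\int_\tau^{\wt t} 2x'x''\,ds.
\]
The integrand $-2x'x''$ is nonnegative (since $x'\le 0$ and $x''\ge 0$ by Lemma \ref{lem: grow_rate_x''_GW}), so I may discard all contributions except those over the intervals $I_i,I_{i+1},\dots$ where the lower bound $x''\ge c_0 x^{m+1}$ holds; but to get a clean telescoping bound I would instead keep only the piece over $[\tau,t'_i]\subset I_i$ and estimate
\[
x'(\tau)^2 \ge \int_\tau^{t'_i} -2x'x''\,ds \ge 2c_0\int_\tau^{t'_i} -x'x^{m+1}\,ds.
\]

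Next I would invoke Lemma \ref{lem: dist_conv}(2), which gives $\int_\tau^{t'_i}-x'x^{m+1}\,ds \ge \frac{1}{2N+1}\int_\tau^{t_{i+1}}-x'x^{m+1}\,ds$ for $\tau\in[t_i,t''_i]$; iterating this redistribution across all the remaining blocks $I_{i+1},I'_{i+1},\dots,I_n$ (each time trading the $I'$-part for a bounded multiple of the adjacent $I$-part, exactly as in the proof of Lemma \ref{lem: dist_conv}) yields a constant $c=c(N)>0$ with
\[
\int_\tau^{t'_i} -x'x^{m+1}\,ds \ \ge\ c\int_\tau^{\wt t} -x'x^{m+1}\,ds \ =\ \frac{c}{m+2}\bigl(x(\tau)^{m+2}-x(\,\wt t\,)^{m+2}\bigr),
\]
where the last equality is the exact integral $\int_\tau^{\wt t}-x'x^{m+1}\,ds=\frac{1}{m+2}(x(\tau)^{m+2}-x(\wt t)^{m+2})$. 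Combining the two displays gives $x'(\tau)^2 \ge \frac{2c_0 c}{m+2}(x(\tau)^{m+2}-x(\wt t)^{m+2})$, and taking square roots (recalling $x'\le 0$) produces the claimed inequality with $2C_3 := \sqrt{2c_0 c/(m+2)}$.

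The one point requiring care—and the main obstacle—is the telescoping step: Lemma \ref{lem: dist_conv}(2) is stated only for a single block and only for $\tau$ in the \emph{first} block, so I need to check that concatenating it over the later blocks $I_{i+1},\dots,I_{n-1}$ (using part (1) for those, and the uniform ratio bound $\min_i|I_i|\ge \frac1N\max_i|I'_i|$) loses only a multiplicative constant independent of $w$, $t$, and $i$, and that the possibly-tiny terminal block $I_n$ (and empty $I'_n$) does no harm since its integrand is still nonnegative and we are only seeking a \emph{lower} bound. I would also note the degenerate cases where $x(\tau)^{m+2}=x(\wt t\,)^{m+2}$ (only at $\tau=\wt t$, outside the relevant range since $x$ is strictly convex on each $I_i$) so the square root is harmless. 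Everything else is the routine energy estimate already used in Section \ref{sec: higher dim}.
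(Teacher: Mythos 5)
Your overall strategy (the energy identity $x'(\tau)^2=\int_\tau^{\wt t}-2x'x''\,ds$, the blockwise lower bound $x''\ge c_0x^{m+1}$ from Lemma \ref{lem: grow_rate_x''_GW}, Lemma \ref{lem: dist_conv} to absorb the gaps $I'_k$, and the exact integration of $-x'x^{m+1}$) is the same as the paper's, but the order in which you discard terms creates a genuine gap. After the energy identity you keep \emph{only} the piece $\int_\tau^{t'_i}-2x'x''\,ds$ and then claim that iterating Lemma \ref{lem: dist_conv} gives $\int_\tau^{t'_i}-x'x^{m+1}\,ds\ \ge\ c\int_\tau^{\wt t}-x'x^{m+1}\,ds$ with $c=c(N)$ uniform in $\tau$, $t$, $w$. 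That intermediate inequality is false. Lemma \ref{lem: dist_conv} only lets the integral over a block $I_k$ absorb the integral over the \emph{one adjacent} gap $I'_k$; once you have thrown away the contributions of $I_{i+1},\dots,I_n$ there is nothing left to trade against the later gaps, and the first block alone cannot dominate the whole tail. Concretely, $\int_\tau^{t'_i}-x'x^{m+1}\,ds=\frac1{m+2}\bigl(x(\tau)^{m+2}-x(t'_i)^{m+2}\bigr)$ while $\int_\tau^{\wt t}-x'x^{m+1}\,ds=\frac1{m+2}\bigl(x(\tau)^{m+2}-x(\wt t\,)^{m+2}\bigr)$; since each block has length at most $\sqrt2\,\gamma_0$ (because $|\phi|\le\pi/4$) and $x(\tau)\asymp(\tau+1)^{-2/m}$ along actual shadowing geodesics, the first quantity is of order $(\tau+1)^{-\frac{2(m+2)}{m}-1}$ whereas the second is of order $(\tau+1)^{-\frac{2(m+2)}{m}}$ when $\wt t\gg\tau$, so their ratio tends to $0$ and no uniform $c$ exists. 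You flagged the telescoping step as the delicate point, but as formulated it is not merely delicate: it fails.

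The repair is exactly the paper's proof: do not discard the later blocks before redistributing. Keep
\[
x'(\tau)^2\ \ge\ 2c_0\Bigl(\int_\tau^{t'_i}-x'x^{m+1}\,ds+\sum_{k>i}\int_{t_k}^{t'_k}-x'x^{m+1}\,ds\Bigr),
\]
apply Lemma \ref{lem: dist_conv}(2) to each summand separately (to the first with your $\tau$, to the later ones with $\tau=t_k$), and the resulting bounds concatenate to $\frac{2c_0}{2N+1}\int_\tau^{\wt t}-x'x^{m+1}\,ds$, after which the exact integral and the square root give $C_3=2^{-1/2}c_0^{1/2}(m+2)^{-1/2}(2N+1)^{-1/2}$; the only bookkeeping is to treat the terminal piece according to whether $\wt t\in I_n$ or $\wt t\in I'_n$, as the paper does. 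With this reordering, your remaining observations (nonnegativity of the integrand, harmlessness of a short last block, sign of $x'$ when taking the square root) are correct.
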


\begin{proof}
If $\wt{t}\in I'_{n}$, by Lemmas \ref{lem: grow_rate_x''_GW} and
\ref{lem: dist_conv}, we have 
\begin{eqnarray*}
x'(\tau)^{2} & = & \int_{\tau}^{\wt{t}}-2x'x''ds\geq2c_{0}\left(\int_{\tau}^{t'_{i}}-x'x^{m+1}ds+\sum_{k=i+1}^{n}\int_{t_{k}}^{t'_{k}}-x'x^{m+1}ds\right)\\
 & \geq & \frac{2c_{0}}{2N+1}\left(\int_{\tau}^{t_{i+1}}-x'x^{m+1}ds+\sum_{k=i+1}^{n-1}\int_{t_{k}}^{t_{k+1}}-x'x^{m+1}ds+\int_{t_{n}}^{\wt{t}}-x'x^{m+1}ds\right)\\
 & \geq & \frac{2c_{0}}{2N+1}\int_{\tau}^{\wt{t}}-x'x^{m+1}ds\geq\frac{2c_{0}}{(m+2)(2N+1)}(x(\tau)^{m+2}-x(\,\wt{t}\,)^{m+2}).
\end{eqnarray*}

Similarly, if $\wt{t}\in I_{n}$, we have 
\begin{eqnarray*}
x'(\tau)^{2} & = & \int_{\tau}^{\wt{t}}-2x'x''ds\\
 & \geq & 2c_{0}\left(\int_{\tau}^{t'_{i}}-x'x^{m+1}ds+\sum_{k=i+1}^{n-1}\int_{t_{k}}^{t'_{k}}-x'x^{m+1}ds+\int_{t_{n}}^{\wt{t}}-x'x^{m+1}ds\right)\\
 & \geq & \frac{2c_{0}}{2N+1}\left(\int_{\tau}^{t_{i+1}}-x'x^{m+1}ds+\sum_{k=i+1}^{n-1}\int_{t_{k}}^{t_{k+1}}-x'x^{m+1}ds+\int_{t_{n}}^{\wt{t}}-x'x^{m+1}ds\right)\\
 & \geq & \frac{2c_{0}}{2N+1}\int_{\tau}^{\wt{t}}-x'x^{m+1}ds\geq\frac{2c_{0}}{(m+2)(2N+1)}(x(\tau)^{m+2}-x(\,\wt{t}\,)^{m+2}).
\end{eqnarray*}
We finish the proof by taking $C_{3}:=2^{-1/2}c_{0}^{1/2}(m+2)^{-1/2}(2N+1)^{-1/2}$. 
\end{proof}
\begin{proof}[Proof of Proposition \ref{prop: estm_x_psi_GW}]
As did in Proposition \ref{prop: estm_x_psi}, we will use $Q$ to
denote a generic constant. The desired lower bound for $x(\tau)$
can be established just as done in Proposition \ref{prop: estm_x_psi}.
This is because the upper bound $0\leq x_{w}''\leq C_{0}x_{w}^{m+1}$
from Lemma \ref{lem: grow_rate_x''_GW} holds for all $\tau\in[0,\wt{t}\,]$,
and this is the only ingredient needed for the lower bound on $x(\tau)$
in Proposition \ref{prop: estm_x_psi}. In particular, we have $x(\tau)\geq(R^{-\frac{m}{2}}+\sqrt{C_{0}}\tau)^{\frac{2}{m}}$
for all $\tau\in[0,\wt{t}\,]$.

On the other hand, the desired upper bound for $x(\tau)$ is more
difficult to obtain. The reason for introducing $t_{i}''$ and establishing
Lemma \ref{lem: ODE_half_int} was to obtain the upper bound. We will
prove the case where $0\in I'_{0}$ and $\wt{t}\in I_{n}$. Other
cases are similar, and we will comment on them at the end of the proof.

First, define a sequence 
\[
S_{n}:=\Big(\sum_{k=1}^{n-1}|t''_{k}-t_{k}|\Big)+|\wt{t}-t_{n}|,~S_{0}=0,\text{ and }S_{i}=\sum_{k=1}^{i}|t''_{k}-t_{k}|.
\]

Then we have $0=S_{0}<S_{1}<\cdots<S_{n-1}<S_{n}$. Define a function
$f:[0,S_{n}]\to\R$ via 
\[
f(\tau):=x(\tau-S_{i}+t_{i+1})\text{ when }\tau\in(S_{i},S_{i+1}].
\]
Then $f$ is a piecewise smooth function with discontinuities at each
$S_{i}$. Moreover, Lemma \ref{lem: ODE_half_int} shows that $f$
is strictly decreasing function satisfying the assumption of Lemma
\ref{lem: estm_sol_ODE_discont} with $Q=C_{3}$. Thus by Lemma \ref{lem: estm_sol_ODE_discont},
there exists $Q_{0}>1$ such that 
\[
f(\tau)\leq Q_{0}(f(0)^{-\frac{m}{2}}+C_{3}m\tau)^{-\frac{2}{m}}\leq Q_{0}(R^{-\frac{m}{2}}+C_{3}m\tau)^{-\frac{2}{m}}
\]
for all $\tau\in[0,S_{n}]$. In particular, this inequality provides
an upper bound for $x(\tau)$ for $\tau\in[t_{i},t_{i}'']$ for $1\leq i\leq n$;
here $t_{n}''$ should be replaced by $\wt{t}$ when $i=n$.

For $\tau\in[t_{i}'',t_{i+1}]$, we have ${\displaystyle \frac{S_{i}}{\tau}\geq\frac{1}{2N+2}}$
from the choice of $N$. Thus for $\tau\in[t_{i}'',t_{i+1}]$, 
\begin{eqnarray*}
x(\tau) & \leq & x(t_{i}'')=f(S_{i})\leq Q_{0}(f(0)^{-\frac{m}{2}}+C_{3}mS_{i})^{-\frac{2}{m}}\\
 & \leq & Q_{0}\left(R^{-m/2}+\frac{C_{3}m}{2N+2}\tau\right)^{-\frac{2}{m}}.
\end{eqnarray*}

For the last remaining subset of the domain when $\tau\in[0,t_{1}]$,
which is due to the assumption that $0\in I'_{0}$, we have $t_{1}\leq\sqrt{2}|\gamma_{0}|$.
Therefore, 
\[
x(\tau)\leq x(0)=R\leq R(\sqrt{2}|\gamma_{0}|+1)^{\frac{2}{m}}(\tau+1)^{-\frac{2}{m}}.
\]

In sum, we can find $Q>0$ such that 
\[
x(\tau)\leq Q(\tau+1)^{-\frac{2}{m}}
\]
for all $\tau\in[0,\wt{t}\,]$.

For $\phi$, using \eqref{eqn: phi_eqn_HD} and Lemmas \ref{lem: angle_lower_bdd_reg}
(1) and \ref{lem: grow_rate_x''_GW}, there exists $Q>0$ such that
\[
\phi'\leq\sqrt{2}\phi'\cos\phi=\sqrt{2}x''\leq\sqrt{2}C_{0}x^{m+1}\leq Q(\tau+1)^{-\frac{2(m+1)}{m}}.
\]
Thus we get the required upper bound for $\phi$: 
\[
|\phi(\tau)|=\left|\int_{\tau}^{\wt{t}}\phi'ds\right|\leq\int_{\tau}^{\wt{t}}|\phi'|ds\leq Q\int_{\tau}^{\wt{t}}(s+1)^{-\frac{2(m+1)}{m}}ds\leq Q[(\tau+1)^{-\frac{m+2}{m}}-(\wt{t}+1)^{-\frac{m+2}{m}}].
\]
This completes the proof when $0\in I'_{0}$ and $\wt{t}\in I_{n}$.

Other remaining cases can be dealt with similarly. When $0\in I_{0}$ and
$\wt{t}\in I_{n}$, then exactly the same proof works; in fact, there
is no need to separately consider $\tau\in[0,t_{1}]$ like we did
above. In the case where $\wt{t}\in I_{n}'$, we can proceed just
as we did above by bounding $x(\tau)$ above by $x(t_{n}'')$ for
$\tau\in[t_{n}'',\wt{t}\,]$.

Now, we consider the lower bound of $|\phi|$. For any $\tau\in[0,\wt{t}-2\sqrt{2}\gamma_{0}]$,
the interval $[\tau,\wt{t}\,]$ contains at least one $[t_{i},t'_{i}]$.
Let $l$ (resp. $L$) be the minimal (resp. maximal) $i$ with $[t_{i},t'_{i}]\subset[\tau,\wt{t}\,]$.
We firstly compare the integrals of $x(s)^{m+1}$ on $[\tau,t_{l}]$
and $[t_{l},t_{l+1}]$. Since $|\phi|<\pi/4$, 
\begin{equation}
t_{l+1}-t_{l}\geq\gamma_{0}\geq\frac{t_{l}-\tau}{\sqrt{2}}.\label{eqn: comp_adj_int_length}
\end{equation}
Moreover, we have 
\[
\frac{t_{l+1}+1}{\tau+1}=1+\frac{t_{l+1}-\tau}{\tau+1}\leq1+2\sqrt{2}\gamma_{0}.
\]
Hence 
\[
x(t_{l+1})\geq Q(t_{l+1}+1)^{-2/m}\geq Q(\tau+1)^{-2/m}\geq Qx(\tau).
\]
Together with \eqref{eqn: comp_adj_int_length}, since $x$ is non-increasing,
we get 
\begin{equation}
\int_{t_{l}}^{t_{l+1}}x(s)^{m+1}ds\geq(t_{l+1}-t_{l})x(t_{l+1})^{m+1}\geq Q\frac{t_{l}-\tau}{\sqrt{2}}x(\tau)^{m+1}\geq Q\int_{\tau}^{t_{l}}x(s)^{m+1}ds.\label{eqn: comp_adj_int}
\end{equation}
Notice that $\phi'=x''/\cos\phi\geq x''$, and $|I_{i}|/|I_{j}|\in[1/\sqrt{2},\sqrt{2}]$
for any $i,j$. Thus, by \eqref{eqn: comp_adj_int} and Lemma \ref{lem: dist_conv}, 
\begin{eqnarray*}
|\phi(\tau)| & = & \int_{\tau}^{\wt{t}}\phi'(s)ds\geq\int_{\tau}^{\wt{t}}x''(s)ds\geq c_{0}\sum_{i=l}^{L}\int_{t_{i}}^{t'_{i}}x(s)^{m+1}\,ds\\
 & \geq & \frac{c_{0}}{2N+1}\sum_{i=l}^{L-1}\int_{t_{i}}^{t_{i+1}}x(s)^{m+1}ds+\frac{c_{0}}{2N+1+\sqrt{2}}\int_{t_{L}}^{\wt{t}}x(s)^{m+1}ds\\
 & \geq & Q\int_{\tau}^{\wt{t}}x(s)^{m+1}ds\geq Q\int_{\tau}^{\wt{t}}(s+1)^{-2(m+1)/m}ds\\
 & \geq & Q[(\tau+1)^{-\frac{m+2}{m}}-(\wt{t}+1)^{-\frac{m+2}{m}}].
\end{eqnarray*}
\end{proof}

\section{Geometric potentials\label{sec:geo_potentials}}

This section aims to prove Theorem \ref{Main-result-2}.
Let $M$ be a closed rank 1 nonpositively curved manifold, and ${\cal F=}(f_{t})_{t\in\R}$
denote the geodesic flow on $T^{1}M.$ Recall that the geometric potential
is defined via 
\[
\varphi^{u}(v):=-\lim_{t\to0}\frac{1}{t}\log\det(df_{t}|_{E^{u}(v)})=-\frac{d}{dt}\Big|_{t=0}\log\det(df_{t}|_{E^{u}(v)}).
\]
As indicated in \cite[Section 7.2]{burns2018unique}, it is convenient
to consider the following auxiliary function whose time evolution
is governed by a Riccati equation:

\[
\psi^{u}(v):=-\lim_{t\to0}\frac{1}{t}\log\det(J_{v,t}^{u})=-\frac{d}{dt}\Big|_{t=0}\log\det(J_{v,t}^{u}),
\]
where $J_{v,t}^{u}:w\in v^{\perp}\mapsto J(t)\in(f_{t}v)^{\perp}$
and $J(t)$ is a unstable Jacobi field along $\gamma_{v}$ such that
$J(0)=w$. We also have $\psi^{u}(v)=-\text{tr }U_{v}^{u}(0)$ where
$U_{v}^{u}(t)$ is the shape operator of the unstable horoshpere $H^{u}(f_{t}v)$.

Let $\pi:T^{1}M\to M$ be the canonical projection. Its derivative
$d\pi_{v}:T_{v}T^{1}M\to T_{\pi v}M$ sends $E^{u}(v)$ onto $v^{\perp}.$
We have $df_{t}=d\pi_{f_{t}v}^{-1}\circ J_{v,t}^{u}\circ d\pi_{v}$,
and thus 
\[
\det(df_{t}|_{E^{u}(v)})=\det(d\pi_{f_{t}v})^{-1}\det(J_{v,t}^{u})\det(d\pi_{v}).
\]
Thus 
\begin{equation}
\varphi^{u}(v)-\psi^{u}(v)=\frac{d}{dt}\Big|_{t=0}\log\det(d\pi_{f_{t}v}).\label{eqn: diff_phi_psi_1st}
\end{equation}
For any $t$, since $U_{v}^{u}(t)$ is symmetric, we can take an orthonormal
basis $\{e_{i}(t)\}_{i=1}^{n-1}$ of $v^{\perp}$ so that $U_{v}^{u}(t)e_{i}(t)=\lambda_{i}(t)e_{i}(t)$
with $\lambda_{i}(t)\geq0$. Since $||df_{t}(\xi)||_{S}=\|J_{\xi}(t)\|^{2}+\|J'_{\xi}(t)\|^{2}$
for $\xi\in E_{v}^{u}$, for any $t$, we have an orthonormal basis
$\{\xi_{i}(t)\}_{i=1}^{n-1}$ of $E_{f_{t}v}^{u}$, where $\xi_{i}(t)$
is determined by 
\[
J_{\xi_{i}}(t)=\frac{e_{i}(t)}{\sqrt{1+\lambda_{i}(t)^{2}}},\quad J'_{\xi_{i}}(t)=\frac{\lambda_{i}(t)e_{i}(t)}{\sqrt{1+\lambda_{i}(t)^{2}}}.
\]
Thus $d\pi_{f_{t}v}(\xi_{i}(t))=e_{i}(t)/\sqrt{1+\lambda_{i}(t)^{2}}$
and the matrix of $d\pi_{f_{t}v}$ with respect to these two orthonormal
bases is $\text{diag}((1+\lambda_{1}(t)^{2})^{-1/2},\cdots,(1+\lambda_{n-1}(t)^{2})^{-1/2})$.
Hence 
\begin{equation}
\log\det(d\pi_{f_{t}v})=\log\prod_{i=1}^{n-1}(1+\lambda_{i}(t)^{2})^{-1/2}=-\frac{1}{2}\log\det(I_{n-1}+U_{v}^{u}(t)^{2}).\label{eqn: log_det_dpi}
\end{equation}
Now we use the following Jacobi formula for $A:\mathbb{R}\to M_{n-1}$:
\[
\frac{d}{dt}\det A(t)=\det A(t)\text{ tr}\left(A(t)^{-1}\frac{dA(t)}{dt}\right).
\]

For simplicity, denote by $U=U_{v}^{u}$ and $I=I_{n-1}$. By \eqref{eqn: diff_phi_psi_1st}
and \eqref{eqn: log_det_dpi}, we have 
\begin{eqnarray*}
\varphi^{u}(v)-\psi^{u}(v) & = & -\frac{1}{2}\text{ tr}\left[(I+U(0)^{2})^{-1}(U'(0)U(0)+U(0)U'(0))\right]\\
 & = & -\frac{1}{2}\text{ tr}\left[(U(0)(I+U(0)^{2})^{-1}+(I+U(0)^{2})^{-1}U(0))U'(0)\right]\\
 & = & -\text{ tr}\left[U(0)(I+U(0)^{2})^{-1}U'(0)\right]
\end{eqnarray*}
Since $U'+U^{2}+\mathcal{K}=0$, $U$ is positive semidefinite, and
$|\text{tr}(AB)|\leq|\text{tr}(A)||\text{tr}(B)|$ if $A,B$ are positive
semidefinite, we have 

\begin{eqnarray*}
|\varphi^{u}(v)-\psi^{u}(v)| & = & \left|\left( \text{ tr}\left[U^{3}(0)(I+U(0)^{2})^{-1}\right]-\text{ tr}\left[U(0)(I+U(0)^{2})^{-1}(-\mathcal{K})\right] \right)\right| \\
 & \leq & (\text{tr }U(0))^{3}+\text{ tr}[U(0)(I+U(0)^{2})^{-1}]\text{ tr}(-\mathcal{K})\\
 & = & -\psi^{u}(v)^{3}-\psi^{u}(v)(-\text{Ric}(v))\leq-\psi^{u}(v)(\psi^{u}(v)^{2}-\text{Ric}(v)),
\end{eqnarray*}

When $v$ is sufficiently close to Sing, $-\psi^{u}(v)$ and $-\text{Ric}(v)$
are small nonnegative numbers, thus we have $\varphi^{u}(v)\approx\psi^{u}(v)$
near $\Sing$. We summarize the above discussion below: 
\begin{prop}
\label{prop:comparability} Suppose $M$ is a closed rank 1 nonpositively
curved manifold. Then we have 
\[
|\varphi^{u}(v)-\psi^{u}(v)|\leq-\psi^{u}(v)(\psi^{u}(v)^{2}-\text{Ric}(v)).
\]
In particular, we have $\vp^{u}(v)\approx\psi^{u}(v)$ near $\Sing$. 
\end{prop}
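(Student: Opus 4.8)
This statement packages the computation carried out just above, so the plan is simply to organize that computation into a proof. First I record the identity $\varphi^u(v)-\psi^u(v)=\frac{d}{dt}\big|_{t=0}\log\det(d\pi_{f_tv})$, which is immediate from \eqref{eqn: diff_phi_psi_1st} since $d\pi_v$ is evaluated at the fixed point $v$. Diagonalizing the symmetric shape operator $U:=U_v^u$ in the orthonormal eigenbasis used in the discussion preceding the statement, I rewrite this derivative via \eqref{eqn: log_det_dpi} as $-\tfrac12\frac{d}{dt}\big|_{t=0}\log\det(I+U(t)^2)$ and evaluate it with the Jacobi formula $\frac{d}{dt}\det A=\det A\cdot\mathrm{tr}(A^{-1}A')$ applied to $A(t)=I+U(t)^2$. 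Using $\frac{d}{dt}U(t)^2=U'(t)U(t)+U(t)U'(t)$, cyclicity of the trace, and the fact that $U(0)$ commutes with the function $(I+U(0)^2)^{-1}$ of itself, the expression collapses to
\[
\varphi^u(v)-\psi^u(v)=-\mathrm{tr}\!\left[U(0)(I+U(0)^2)^{-1}U'(0)\right].
\]

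Next I substitute the operator Riccati equation \eqref{eq:operator-Riccati}, namely $U'(0)=-U(0)^2-\mathcal{K}(\dot\gamma_v(0))$, which splits the right side into $\mathrm{tr}[U(0)^3(I+U(0)^2)^{-1}]$ and $\mathrm{tr}[U(0)(I+U(0)^2)^{-1}(-\mathcal{K})]$. Writing $\lambda_1,\dots,\lambda_{n-1}\ge0$ for the eigenvalues of $U(0)$ (nonnegative, as $U(0)$ is the shape operator of an unstable horosphere in a nonpositively curved manifold), the first term equals $\sum_i\lambda_i^3(1+\lambda_i^2)^{-1}\le\sum_i\lambda_i^3\le(\sum_i\lambda_i)^3=(\mathrm{tr}\,U(0))^3$. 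For the second term, $U(0)(I+U(0)^2)^{-1}$ and $-\mathcal{K}(\dot\gamma_v(0))$ are both positive semidefinite (the latter because the sectional curvature is nonpositive), so the inequality $|\mathrm{tr}(AB)|\le\mathrm{tr}(A)\,\mathrm{tr}(B)$ for positive semidefinite $A,B$, together with $\sum_i\lambda_i(1+\lambda_i^2)^{-1}\le\sum_i\lambda_i$, bounds it by $\mathrm{tr}\,U(0)\cdot\mathrm{tr}(-\mathcal{K})$. Recalling $\psi^u(v)=-\mathrm{tr}\,U_v^u(0)$ and $\mathrm{Ric}(v)=\mathrm{tr}\,\mathcal{K}(v)$, the two bounds combine to
\[
|\varphi^u(v)-\psi^u(v)|\le(-\psi^u(v))^3+(-\psi^u(v))(-\mathrm{Ric}(v))=-\psi^u(v)\bigl(\psi^u(v)^2-\mathrm{Ric}(v)\bigr),
\]
and the right side is nonnegative since $-\psi^u(v)\ge0$ and $-\mathrm{Ric}(v)\ge0$.

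For the comparability clause I argue by continuity and compactness. The functions $v\mapsto\psi^u(v)$ and $v\mapsto\mathrm{Ric}(v)$ are continuous on $T^1M$ (the former because the unstable subbundle varies continuously), and in the settings where this proposition is applied they vanish on $\Sing$: along a singular geodesic the relevant sectional curvatures vanish identically, so the Jacobi equation forces unstable Jacobi fields to be parallel and hence $U_v^u(0)=0$. Thus, given $\epsilon\in(0,1)$, there is a neighborhood $N$ of $\Sing$ on which $\psi^u(v)^2-\mathrm{Ric}(v)<\epsilon$, and the inequality just proved gives $|\varphi^u(v)-\psi^u(v)|\le\epsilon\,(-\psi^u(v))$ on $N$, so $\varphi^u$ and $\psi^u$ are comparable there (with both equal to $0$ on $\Sing$). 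The only point that needs care in the whole argument is verifying that each matrix factor is genuinely positive semidefinite so that the trace inequality and the scalar eigenvalue estimates apply; the rest is bookkeeping with the Riccati equation.
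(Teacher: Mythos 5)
Your proposal is correct and follows essentially the same route as the paper: the identity $\varphi^u-\psi^u=\frac{d}{dt}\big|_{t=0}\log\det(d\pi_{f_tv})$, the reduction via $\log\det(d\pi_{f_tv})=-\tfrac12\log\det(I+U(t)^2)$ and the Jacobi formula, substitution of the Riccati equation, and the positive-semidefinite trace estimates yielding $|\varphi^u-\psi^u|\leq-\psi^u(\psi^{u\,2}-\mathrm{Ric})$, with the comparability near $\Sing$ obtained exactly as in the paper from the smallness of $-\psi^u$ and $-\mathrm{Ric}$ there. The only differences are expository (spelling out the eigenvalue bound for the first trace term and the explicit $\epsilon$-neighborhood argument), not mathematical.
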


\subsection{The proof of Theorem \ref{Main-result-2}}

The strategy of the proof is to study the auxiliary function $\psi^{u}$
through the associated Riccati equation. We establish a version of
Theorem \ref{Main-result-2} for $\psi^{u}$. Then Theorem \ref{Main-result-2}
follows Proposition \ref{prop:comparability}.

We remark that the additional Ricci curvature constraint is essential
in our argument. In the higher dimension scenario, only having radial curvature controlled is insufficient. Nevertheless, for some special
Riemannian metrics, namely, warped products, the radial curvature
$K_{\perp}(v)$ and Ricci curvature ${\rm Ric}(v)$ are comparable.
Since this observation is not in the mainstream of the current paper,
we leave the proof in Appendix \ref{sec:Appendix}.

Let $M$ be a Type 1 manifold with order $m-1$ Ricci curvature bounds,
that is, there exists $k_{0},K_{0}>0$ such that 
\begin{equation}
-K_{0}|x_{v}|^{m}\leq\text{Ric}(v)\leq-k_{0}|x_{v}|^{m}\label{eqn: ric_curv_bdd}
\end{equation}
for all $v$ with $|x_{v}|\leq\ep$. 

\begin{prop}
\label{prop: hold_cont_geom_pot} Assume $M$ satisfies the assumption of Theorem \ref{Main-result-2},  then 
$$-\psi^{u}(v)\approx|x_{v}|^{m/2}+|\phi_{v}|^{m/(m+2)},$$
for any $v$ near Sing.
\end{prop}
In particular,  we have the same scale estimation for $-\varphi^{u}$,  and Theorem  \ref{Main-result-2} follows.  
To prove Proposition \ref{prop: hold_cont_geom_pot}, we need the
the following lemma. 
\begin{lem}
\label{lem: est_trace_ric_curv} Assume there exist $K_{1}>k_{1}>0$
so that 
\begin{enumerate}[font=\normalfont]
\item $-K_{1}^{2}T^{-2}\leq\text{Ric}(\gamma'_{v}(t))\leq-k_{1}^{2}T^{-2}$
for all $t\in[-T,0]$, then there exists $K_{2}>k_{2}>0$ depending
on $k_{1},K_{1}$ so that 
\[
k_{2}T^{-1}\leq-\psi^{u}(v)\leq K_{2}T^{-1}.
\]
\item $-K_{1}^{2}T^{-2}\leq\text{Ric}(\gamma'_{v}(t))\leq0$ for all $t\in[0,k_{1}T]$,
and $k_{3}T^{-1}\leq-\psi^{u}(v)\leq K_{3}T^{-1}$, for some $k_3<K_3$, then there exist
$K_{4}>k_{4}>0$ depending on $k_{1},K_{1},k_{3},K_{3}$ so that 
\[
k_{4}T^{-1}\leq-\psi^{u}(\gamma'_{v}(t))\leq K_{4}T^{-1},
\]
for all $t\in[0,k_{1}T].$ 
\end{enumerate}
\end{lem}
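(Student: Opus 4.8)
The plan is to treat both parts via the scalar Riccati comparison coming from the trace of the operator Riccati equation \eqref{eq:operator-Riccati}. Recall that $-\psi^{u}(v)=\operatorname{tr}U_{v}^{u}(0)$ and that along $\gamma_{v}$ the function $u(t):=\operatorname{tr}U_{v}^{u}(t)$ satisfies $u'+\operatorname{tr}(U^{2})+\operatorname{Ric}(\gamma_{v}'(t))=0$, where, by Cauchy--Schwarz applied to the eigenvalues of the symmetric positive semidefinite $U$, one has $\tfrac{1}{n-1}u^{2}\le\operatorname{tr}(U^{2})\le u^{2}$. Thus $u$ is squeezed between the solutions of the scalar Riccati equations $y'+\tfrac{1}{n-1}y^{2}+\operatorname{Ric}=0$ and $y'+y^{2}+\operatorname{Ric}=0$. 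First I would rescale time: set $s=t/T$, $\bar u(s):=T\,u(Ts)$, so that the equation becomes $\bar u'(s)+c\,\bar u(s)^{2}+T^{2}\operatorname{Ric}(\gamma_{v}'(Ts))=0$ with the rescaled curvature bounded between $-K_{1}^{2}$ and $-k_{1}^{2}$ (part (1)) or between $-K_{1}^{2}$ and $0$ (part (2)). After this normalization the statement $k_{2}T^{-1}\le-\psi^{u}(v)\le K_{2}T^{-1}$ becomes the scale-invariant assertion $k_{2}\le-\bar u(0)\le K_{2}$, and everything reduces to a comparison argument with \emph{constant} curvature bounds on a unit-length interval.

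For part (1): with constant curvature $-a^{2}$ the stable/unstable scalar Riccati solution on $[-1,0]$ that remains bounded as $s\to-\infty$ is the constant $y\equiv a\sqrt{c}$ (the positive root of $cy^{2}=a^{2}$) for the genuinely infinite-backward-time solution, but since we only control $\operatorname{Ric}$ on $[-T,0]$ we instead use the standard fact (see \cite{burns2018unique}, Lemma 2.9 and the discussion around \eqref{eq:operator-Riccati}) that $U_{v}^{u}(-T)$ is positive semidefinite, hence $u(-T)\ge0$, and then compare the solution of $y'+cy^{2}=a^{2}$ on $[-1,0]$ with initial data $y(-1)\ge0$ against the solution with $y(-1)=0$ (which gives the lower bound $\frac{a}{\sqrt c}\tanh(\cdot)$, positive on $(-1,0]$) and against the constant supersolution $y\equiv a\sqrt{c}$ (which is an upper barrier once curvature is bounded below). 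Running the two-sided comparison on the normalized interval produces explicit positive constants $k_{2},K_{2}$ depending only on $k_{1},K_{1}$ (and on $n$, but $n$ is fixed), which is exactly the claim. I would cite the Eschenburg comparison theorem \cite{eschenburg1990comparison} as used in Lemma \ref{lem: grow_rate_x''_HD} to make the operator-level comparison rigorous, so that the trace inequalities are legitimate.

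For part (2): here I would propagate the bound $k_{3}T^{-1}\le-\psi^{u}(v)=u(0)\le K_{3}T^{-1}$ forward in time along $[0,k_{1}T]$. In normalized variables $\bar u(0)\in[k_{3},K_{3}]$ and $\bar u'+c\bar u^{2}+T^{2}\operatorname{Ric}=0$ with $-K_{1}^{2}\le T^{2}\operatorname{Ric}\le0$ on $[0,k_{1}]$. For the upper bound on $-\psi^{u}$ at later times: since $T^{2}\operatorname{Ric}\le0$ we get $\bar u'\le-c\bar u^{2}\le0$ (using $\bar u\ge0$), so $\bar u$ is nonincreasing and stays $\le K_{3}$; this gives $K_{4}:=K_{3}$. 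For the lower bound: from $\bar u'\ge-c\bar u^{2}-(-T^{2}\operatorname{Ric})\ge-c\bar u^{2}-K_{1}^{2}$ and the a priori bound $0\le\bar u\le K_{3}$, we get $\bar u'\ge-(cK_{3}^{2}+K_{1}^{2})$, so $\bar u(s)\ge k_{3}-(cK_{3}^{2}+K_{1}^{2})\,s$, which is bounded below by a positive constant provided $s\le k_{1}$ is small; if $k_{1}$ is not automatically small enough one instead integrates the exact Riccati inequality (a Bernoulli-type ODE) to get the sharp decay rate, which still only degrades $\bar u$ by a bounded factor over the unit-length interval, yielding a positive $k_{4}$ depending only on $k_{1},K_{1},k_{3},K_{3}$. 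Unwinding the rescaling gives $k_{4}T^{-1}\le-\psi^{u}(\gamma_{v}'(t))\le K_{4}T^{-1}$ on $[0,k_{1}T]$.

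The main obstacle is purely the bookkeeping of constants through the three reductions --- passing from the operator Riccati equation to scalar sub/supersolutions (where the dimension-dependent constant $c$ between $\tfrac1{n-1}$ and $1$ must be handled, but since $n$ is fixed this only affects the final constants), the time-rescaling, and the choice of barriers on the unit interval --- rather than any genuine analytic difficulty; the only place requiring care is the lower bound in part (2), where one must make sure the interval length $k_{1}$ (relative to the natural Riccati time-scale $1/\sqrt{\operatorname{Ric}}\sim T$) is short enough that $-\psi^{u}$ cannot have decayed to zero, which is precisely why the hypothesis restricts to $t\in[0,k_{1}T]$ with the \emph{same} $k_{1}$ that controls the curvature.
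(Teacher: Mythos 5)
Your overall strategy coincides with the paper's: take the trace of the operator Riccati equation \eqref{eq:operator-Riccati}, squeeze $\text{tr}\,U^u_v$ between solutions of scalar Riccati equations via the eigenvalue Cauchy--Schwarz inequality, and compare with constant-curvature model solutions (the rescaling to a unit interval is cosmetic). However, three of your comparison steps fail as written. First, in part (1) the constant $y\equiv a\sqrt{c}$ is an upper barrier only for solutions lying \emph{below} it at the left endpoint, and the hypothesis gives no upper control on $u(-T)$ --- only $u(-T)\ge 0$. In the intended application $T$ is large and $u(-T)$ can be of order $1$, far above $a\sqrt{c}\,T^{-1}$, so the ordering needed for the comparison principle is simply absent. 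The repair is exactly what the paper does: compare with the solution of $\bar u'+\bar u^{2}=K_1^{2}T^{-2}$ with $\bar u(-T)=+\infty$, which dominates every finite-initial-data solution and equals $K_1T^{-1}\coth K_1$ at $t=0$.

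Second, in part (2) your deduction that $\text{Ric}\le 0$ gives $\bar u'\le -c\bar u^{2}\le 0$ has the curvature term with the wrong sign: from $u'=-\text{tr}(U^{2})-\text{Ric}$ and $\text{Ric}\le 0$, the term $-\text{Ric}\ge 0$ pushes $u$ \emph{up}; in constant curvature $-K_1^{2}T^{-2}$ a small initial value increases toward roughly $K_1T^{-1}$, so $\bar u$ need not be nonincreasing and $K_4=K_3$ is unjustified (false when $K_3<K_1$). One must compare with the explicit solution of $\bar u'+\bar u^{2}=K_1^{2}T^{-2}$ with initial value $K_3T^{-1}$, which stays bounded by a multiple of $\max(K_1,K_3)T^{-1}$ on $[0,k_1T]$, as in the paper. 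Third, your lower bound in part (2) works only for small $k_1$, and the proposed fallback --- integrating $\bar u'\ge -c\bar u^{2}-K_1^{2}$ --- cannot rescue it: solutions of $y'=-cy^{2}-K_1^{2}$ hit zero in finite time (about $\frac{1}{\sqrt{c}\,K_1}\arctan(\sqrt{c}\,k_3/K_1)$), so this inequality yields no positive lower bound on the whole interval when $k_1$ is not small. The correct move exploits the favorable sign of the curvature term: since $-\text{Ric}\ge 0$ and $\text{tr}(U^{2})\le(\text{tr}\,U)^{2}$, one has $w'\ge -w^{2}$ for $w=\text{tr}\,U^u_v$, hence $w(t)\ge (t+w(0)^{-1})^{-1}\ge (k_1T+k_3^{-1}T)^{-1}$, positive on all of $[0,k_1T]$ with no smallness condition on $k_1$; your closing remark that the interval length $k_1T$ must be short ``so that $-\psi^u$ cannot decay to zero'' is a symptom of having used the wrong differential inequality.
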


\begin{proof}
Denote by $u(t)=\frac{1}{n-1}\text{tr}(U_{v}^{u}(t))$. Since $U_{v}^{u}$
is diagonalizable and all eigenvalues $\lambda_{i}(t)$ are nonnegative,
by Cauchy-Schwartz we have 
\[
\text{tr}((U_{v}^{u})^{2})\leq(n-1)^{2}u^{2}\leq(n-1)\text{tr}((U_{v}^{u})^{2}).
\]
Thus, by the Riccati equation, 
\[
u'=\frac{\text{tr}((U_{v}^{u})')}{n-1}=-\frac{\text{tr}((U_{v}^{u})^{2})}{n-1}-\frac{\text{Ric}(\dot{\gamma})}{n-1}\leq-u^{2}-\frac{\text{Ric}(\dot{\gamma})}{n-1}.
\]
On the other hand, denote by $w(t)=(n-1)u(t)=\text{tr}(U_{v}^{u}(t)).$
We have 
\[
w'=-\text{tr}((U_{v}^{u})^{2})-\text{Ric}(\dot{\gamma})\geq-w^{2}-\text{Ric}(\dot{\gamma}).
\]
\begin{enumerate}
\item Compare $u$ with the solution of 
\[
\bar{u}'+\bar{u}^{2}-K_{1}^{2}T^{-2}=0,\,\,\bar{u}(-T)=+\infty\quad\Rightarrow\quad\bar{u}(0)=K_{1}T^{-1}\coth K_{1}.
\]
By the main theorem in \cite{eschenburg1990comparison}, we have 
\[
-\psi^{u}(v)=(n-1)u(0)\leq(n-1)\bar{u}(0)=:K_{2}T^{-1}.
\]
Compare $w$ with the solution of 
\[
\bar{w}'+\bar{w}^{2}-k_{1}^{2}T^{-2}=0,\,\,\bar{w}(-T)=0\quad\Rightarrow\quad\bar{w}(0)=k_{1}T^{-1}\tanh k_{1}.
\]
We have 
\[
-\psi^{u}(v)=w(0)\geq\bar{w}(0)=:k_{2}T^{-1}.
\]
\item Compare $u$ with the solution of 
\[
\bar{u}'+\bar{u}^{2}-K_{1}^{2}T^{-2}=0,\,\,\bar{u}(0)=K_{3}T^{-1}.
\]
We have 
\[
\bar{u}(t)=K_{1}T^{-1}\coth(K_{1}T^{-1}t+\coth^{-1}(K_{3}/K_{1})).
\]
Since $\coth$ is decreasing, so does $\bar{u}$. For $t\in[0,k_{1}T]$,
we get 
\[
-\psi^{u}(\gamma'_{v}(t))=(n-1)u(t)\leq(n-1)\bar{u}(t)\leq(n-1)\bar{u}(0)=:K_{4}T^{-1}.
\]
Compare $w$ with the solution of 
\[
\bar{w}'+\bar{w}^{2}=0,\,\,\bar{w}(0)=k_{3}T^{-1}\quad\Rightarrow\quad\bar{w}(t)=(t+k_{3}^{-1}T)^{-1}.
\]
For $t\in[0,k_{1}T]$, 
\[
-\psi^{u}(\gamma'_{v}(t))=w(t)\geq\bar{w}(t)\geq(k_{1}T+k_{3}^{-1}T)^{-1}=:k_{4}T^{-1}.
\]
\end{enumerate}
\end{proof}
\begin{figure}
\subfloat[Case 1]{\includegraphics[scale=0.35]{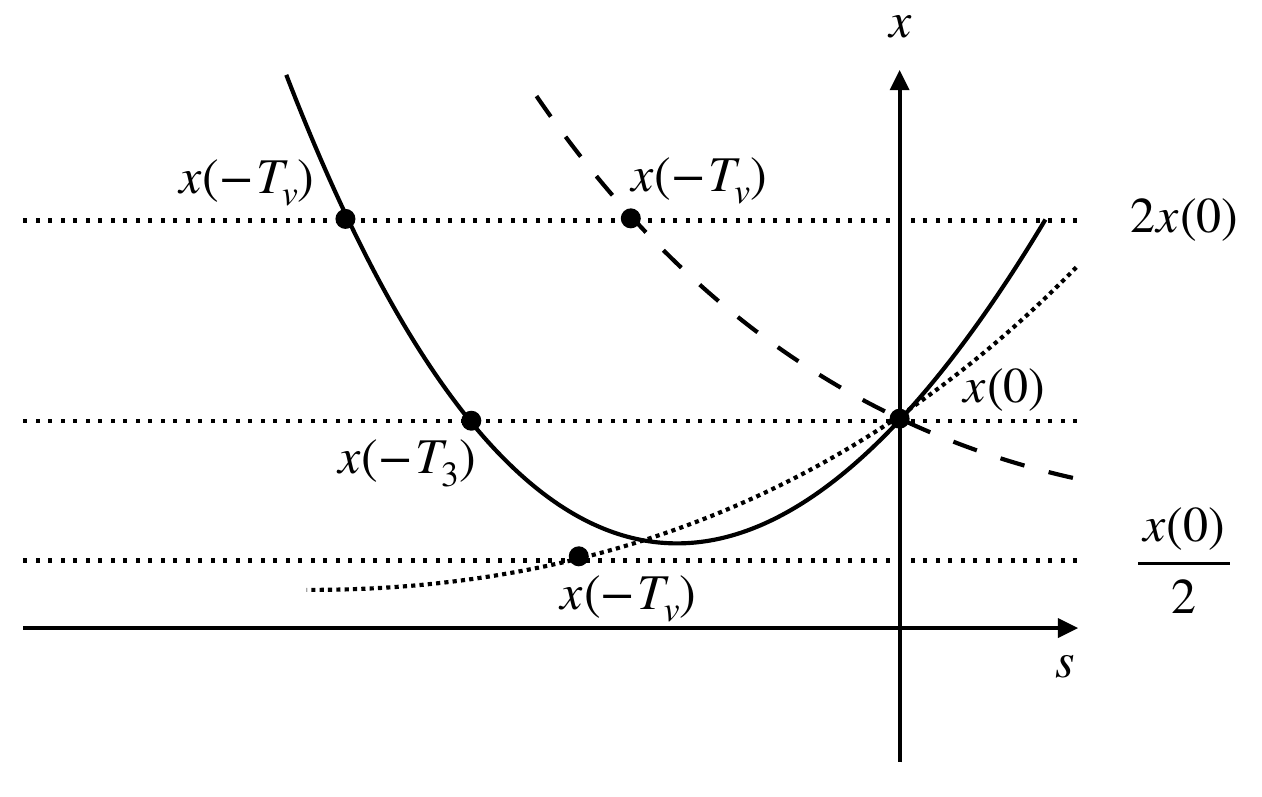}\label{fig:case1}

}\subfloat[Case 2]{\includegraphics[scale=0.35]{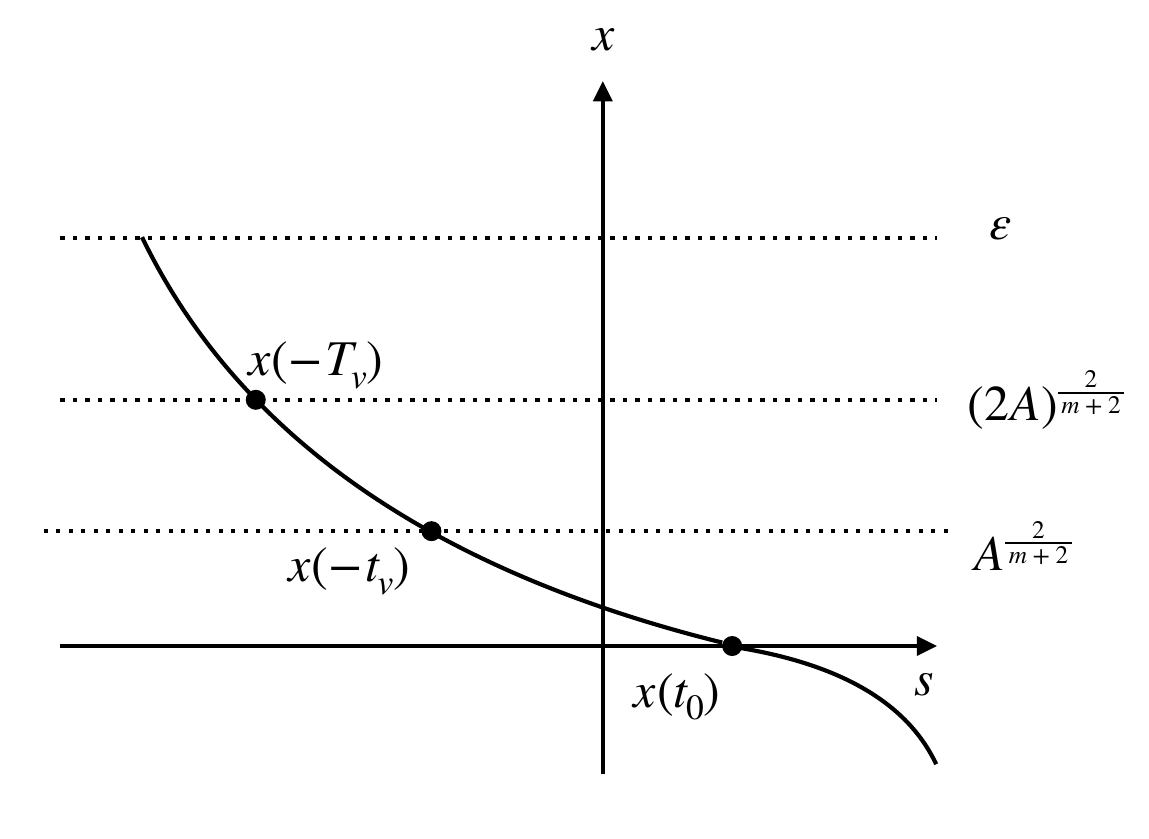}\label{fig:case2}

}

\caption{Proof of Proposition \ref{prop: hold_cont_geom_pot} }
\end{figure}

\begin{proof}[Proof of Proposition \ref{prop: hold_cont_geom_pot}]
We follow the main steps in the proof of \cite[Lemma 3.3]{gerber1999holder}.
We use $x,\phi$ instead of $x_{v},\phi_{v}$ for simplicity.\\
 \linebreak{}
\textbf{Case 1: $v$ is bouncing or asymptotic.} See Figure \ref{fig:case1}.
Since the asymptotic case is the bouncing case with $\tilde{t}=\infty$,
we only have to consider the bouncing $v$. We may assume $0<x(0)\leq\ep/2$.
Denote by 
\[
T_{v}:=\max\{T>0:x(t)\in[x(0)/2,2x(0)],\forall-T\leq t\leq0\}.
\]
For any $t\in[-T_{v},0]$, we have $-K_{5}x(0)^{m}\leq\text{Ric}(\dot{\gamma}(t))\leq-k_{5}x(0)^{m}$.
Moreover we have 
\begin{lem}
\label{lem: estim_Tv} $T_{v}\geq k_{6}x(0)^{-m/2}$ for some $k_{6}>0$
independent of $v$. 
\end{lem}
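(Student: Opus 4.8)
The plan is a bootstrap (continuity) argument showing that, in backward time, $x(t)=x_v(t)$ cannot leave the thin band $[x(0)/2,2x(0)]$ sooner than time $\sim x(0)^{-m/2}$. After possibly shrinking $\ep$ so that $\ep<R$ (hence $2x(0)\le\ep<R$ whenever $x(0)\le\ep/2$), the first ingredient is that on \emph{any} sub-interval $[-\tau,0]$ on which $x\in[x(0)/2,2x(0)]$ we may apply Lemma \ref{lem: grow_rate_x''_HD} together with the positivity and strict convexity of $x$ on the whole excursion $(-T_1,T_2)$ of the bouncing (or asymptotic) vector $v$, getting the uniform bound
\[
0\le x''(t)\le\tfrac{C_1}{m+1}x(t)^{m+1}\le\tfrac{2^{m+1}C_1}{m+1}\,x(0)^{m+1},\qquad t\in[-\tau,0].
\]

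The second ingredient is a bound on the initial slope, $|x'(0)|\le C'x(0)^{(m+2)/2}$ with $C'=\sqrt{2C_1/((m+1)(m+2))}$, obtained exactly as in Case 1 of the proof of Proposition \ref{prop: estm_x_psi}: if $\wt{t}\in(-T_1,T_2)$ (or $\wt{t}=\infty$ in the asymptotic case) is the point at which $x$ attains its minimum, then $x'(\wt{t})=0$, the function $x$ is monotone between $0$ and $\wt{t}$ with $x\le x(0)<R$ on that interval, and integrating $\tfrac{d}{ds}x'(s)^2=2x'(s)x''(s)$ against the upper bound $x''\le\tfrac{C_1}{m+1}x^{m+1}$ yields
\[
x'(0)^2\le\tfrac{2C_1}{(m+1)(m+2)}\bigl(x(0)^{m+2}-x(\wt{t})^{m+2}\bigr)\le\tfrac{2C_1}{(m+1)(m+2)}\,x(0)^{m+2}.
\]

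Combining these, for $t\in[-\tau,0]$ one has $|x'(t)|\le|x'(0)|+\tau\sup_{[-\tau,0]}|x''|\le C'x(0)^{(m+2)/2}+\tau\tfrac{2^{m+1}C_1}{m+1}x(0)^{m+1}$, and therefore $|x(t)-x(0)|\le\tau\bigl(C'x(0)^{(m+2)/2}+\tau\tfrac{2^{m+1}C_1}{m+1}x(0)^{m+1}\bigr)$. Setting $T^{*}:=k_6\,x(0)^{-m/2}$, the right-hand side at $\tau=T^{*}$ equals $\bigl(k_6C'+k_6^2\tfrac{2^{m+1}C_1}{m+1}\bigr)x(0)$, which is $<x(0)/2$ once $k_6=k_6(C_1,m)$ is chosen small enough; crucially $k_6$ depends only on $C_1$ and $m$, hence is independent of $v$. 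Then one runs the bootstrap: let $\tau_1:=\sup\{\tau\in[0,T^{*}]:x(t)\in[x(0)/2,2x(0)]\text{ for all }t\in[-\tau,0]\}$, which is positive by continuity. If $\tau_1<T^{*}$, then $x(-\tau_1)\in\{x(0)/2,2x(0)\}$ by continuity, while the estimate above (valid on $[-\tau_1,0]$, since $x$ stays in the band there) forces $|x(-\tau_1)-x(0)|<x(0)/2$, a contradiction. Hence $\tau_1=T^{*}$, so $x(t)\in[x(0)/2,2x(0)]$ for all $t\in[-T^{*},0]$, i.e. $T_v\ge T^{*}=k_6\,x(0)^{-m/2}$.

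The only point needing a little care — and the one I would verify explicitly — is the legitimacy of applying Lemma \ref{lem: grow_rate_x''_HD} (in particular $x''\ge 0$ and $x>0$) on \emph{backward} sub-intervals: this is fine because on $[-\tau_1,0]$ one has $0<x(t)\le2x(0)<R$, so the orbit lies inside the excursion $(-T_1,T_2)$ of the bouncing (or asymptotic) vector, where $x>0$, $x$ is strictly convex, and the hypothesis $|x_v|<R$ of that lemma holds. I do not expect any other obstacle; the remaining steps are elementary integration, and the asymptotic case ($\wt{t}=\infty$) is identical.
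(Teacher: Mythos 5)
Your proof is correct: the smallness of $|x'(0)|$ obtained by integrating $x''\le\frac{C_1}{m+1}x^{m+1}$ out to the minimum (valid in both the bouncing and asymptotic cases, forward or backward), the uniform bound $x''\le\frac{2^{m+1}C_1}{m+1}x(0)^{m+1}$ on the band, and the continuity/bootstrap argument together do force $x$ to remain in $[x(0)/2,2x(0)]$ for a time $\ge k_6x(0)^{-m/2}$ with $k_6=k_6(C_1,m)$, and your caveat about applying Lemma \ref{lem: grow_rate_x''_HD} on backward subintervals is handled exactly as you say, since $0<x\le 2x(0)\le\ep$ there.

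The route differs from the paper's in its packaging, though the key input, the Riccati-derived bound of Lemma \ref{lem: grow_rate_x''_HD}, is the same. The paper argues at the exit time $-T_v$: it splits into three cases according to whether $x(-T_v)=2x(0)$ or $x(0)/2$ and whether $x$ is monotone on $[-T_v,0]$, and on the relevant monotone stretch it integrates the differential inequality $\frac{d}{d\tau}x^{-m/2}\le K_6$ (this is \eqref{eqn: upp_bdd_f_power} from Lemma \ref{lem: estm_sol_ODE_discont} with $\alpha=m+2$, $\beta=1/2$, i.e.\ the pointwise bound $|x'|\lesssim x^{(m+2)/2}$ along the stretch); since $x^{-m/2}$ must change by a definite multiple of $x(0)^{-m/2}$ when $x$ changes by a factor of $2$, the time estimate follows. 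Your argument instead anchors everything at $\tau=0$: only $|x'(0)|\lesssim x(0)^{(m+2)/2}$ plus the second-derivative bound on the band enter, and the displacement estimate $|x(t)-x(0)|\le\tau|x'(0)|+\tau^2\sup x''$ combined with the bootstrap replaces the case analysis entirely, treating monotone and non-monotone excursions and both exit sides uniformly. What the paper's version buys is economy, since it reuses the machinery of Lemma \ref{lem: estm_sol_ODE_discont} already set up for Proposition \ref{prop: estm_x_psi} and tracks the natural quantity $x^{-m/2}$; what yours buys is a more elementary, case-free argument that is arguably easier to check. Either yields the same conclusion with a constant depending only on $C_1$ and $m$.
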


\begin{proof}
Case 1: $x(-T_{v})=2x(0)$, and $x(t)$ is decreasing on $[-T_{v},0]$.
By \eqref{eqn: upp_bdd_f_power} with $\alpha=m+2,\beta=1/2$, we
have 
\[
x(0)^{-m/2}\leq(2x(0))^{-m/2}+K_{6}T_{v}.
\]
Thus $T_{v}\geq k_{6}x(0)^{-m/2}$.

Case 2: $x(-T_{v})=x(0)/2$, similar to Case 1.

Case 3: $x(-T_{v})=2x(0)$, and $x(t)$ first decreases, then increases
on $[-T_{v},0]$. Assume $T_{3}\in(0,T_{v})$ satisfies $x(-T_{3})=x(0)$.
By Case 1 we know that $T_{v}>T_{v}-T_{3}\geq k_{6}x(0)^{-m/2}$. 
\end{proof}
\noindent Taking $T=k_{6}x(0)^{-m/2}$ in Lemma \ref{lem: est_trace_ric_curv}(1),
we get 
\[
k_{7}x(0)^{m/2}\leq-\psi^{u}(v)\leq K_{7}x(0)^{m/2}.
\]
By \eqref{eqn: bdd_x'_crossing-1}, we know that 
\[
|\phi(0)|^{m/(m+2)}\leq|2\sin\phi(0)|^{m/(m+2)}=(-2x'(0))^{m/(m+2)}\leq Qx(0)^{m/2}.
\]
Thus, we finish the proof of Proposition \ref{prop: hold_cont_geom_pot}
in this case. \\
 \linebreak{}
\textbf{Case 2: $v$ is crossing.} See Figure \ref{fig:case2}. Recall
that $x(t_{0})=0$, and $a(t)=-\sin\phi(t)$. Denote by $A:=|a(t_{0})|$.
Since $v$ is close to $\Sing$, we may assume that $2A<\ep^{(m+2)/2}$
and $0\leq x(0)<A^{2/(m+2)}$. Since $x'=\sin\phi$, by \eqref{eqn: bdd_x'_crossing-1}
we have 
\[
x(0)^{m/2}\leq A^{m/(m+2)}\leq|x'(0)|^{m/(m+2)}\leq|\phi(0)|^{m/(m+2)}.
\]
Since $x'=a$ and $x''\geq 0$ before crossing, we have $x'(0)\leq x'(t_0)=-A$ if $t_0>0$, and $x'(0)\geq x'(t_0)=A$ if $t_0<0$. Thus
\[
A\leq|x'(0)|=|\sin\phi(0)|\leq\sqrt{A^{2}+K_{8}x(0)^{m+2}}\leq A\sqrt{1+K_{8}}.
\]

\noindent Thus, it suffices to prove 
\begin{equation}
k_{9}A^{m/(m+2)}\leq-\psi^{u}(v)\leq K_{9}A^{m/(m+2)}.\label{eqn: psi_bdd_0}
\end{equation}
 We prove \eqref{eqn: psi_bdd_0} in the following two cases: 
\begin{itemize}
\item  Case 2a: $\phi(0)<0$.   In this case, $t_{0}>0$
and $x'<0$ for $t\in[0,t_{0}]$. Let $T_{v},t_{v}>0$ be the minimal
solutions of 
\[
x(-T_{v})=(2A)^{2/(m+2)},\quad x(-t_{v})=A^{2/(m+2)}.
\]

By the choice of $A$, we have $x(-t_{v})<x(-T_{v})<\ep$. Since $x'=\sin\phi$ and $x''\geq0$, thus $a(t)=|x'(t)|=|\sin\phi(t)|\geq|\sin\phi(t_{0})|=A$
for all $t\in[-t_{v},t_{0}]$. 
On the other hand, by \eqref{eqn: bdd_x'_crossing-1} we have
$$a^2\leq A^2+\frac{C_1}{(m+1)^2}x^{m+2}\leq A^2+\frac{C_1}{(m+1)^2}(2A)^2.$$
for all $t\in[-t_{v},t_{0}]$. In sum, there exists $K_{10}>0$ such that
\begin{equation}
A\leq a(t)\leq K_{10}A,\quad \text{for }t\in[-t_{v},t_{0}].\label{eqn: bdd_a}
\end{equation}
In \eqref{eqn: upp_bdd_f_power}, taking $a=-T_v$, $\tau=-t_v$, $f(t)=a(t)$, $\alpha=2$, and $\beta=\frac{m+1}{m+2}$ , we have 
\[
a(-t_v)^{-\frac{m}{m+2}}\leq a(-T_v)^{-\frac{m}{m+2}}+\frac{Q_2m}{m+2}(T_{v}-t_{v}).
\]
Together with \eqref{eqn: bdd_a}, we get $T_{v}-t_{v}>k_{10}A^{-m/(m+2)}$ for some $k_{10}>0$. Taking $T=k_{10}A^{-m/(m+2)}$
in Lemma \ref{lem: est_trace_ric_curv}(1), we have 
\begin{equation}
k_{11}A^{m/(m+2)}\leq-\psi^{u}(\gamma'_{v}(-t_{v}))\leq K_{11}A^{m/(m+2)}.\label{eqn: psi_bdd_tv}
\end{equation}
Use \eqref{eqn: bdd_a} again, we obtain
\begin{equation}
(t_{v}+t_{0})A\leq\int_{-t_{v}}^{t_{0}}-x'(s)ds=-x(t_{0})+x(-t_{v})=A^{2/(m+2)}.\label{eqn: bdd_t0_tv}
\end{equation}
Thus $t_{v}\leq A^{-m/(m+2)}$. By \eqref{eqn: psi_bdd_tv}, taking
$T=A^{-m/(m+2)}$ in Lemma \ref{lem: est_trace_ric_curv}(2), we get
\begin{equation}
k_{12}A^{m/(m+2)}\leq-\psi^{u}(v)\leq K_{12}A^{m/(m+2)}.\label{eqn: psi_bdd_0_pos}
\end{equation}
\item Case 2b: $\phi(0)>0$. In this case, we have $t_{0}<0$ since $\gamma_v$ crosses $T_0$ in the past.  Since $\gamma'_{v}(t_{0})$ crosses $T_0$ and we do not have flat strips, what happens for $t<t_0$ is covered in Case 2a (the absense of flat strip is necessary, see Remark \ref{rem: no_flat_strip_cond} for more details). By the result of Case 2a we
know that 
\begin{equation}
k_{12}A^{m/(m+2)}\leq-\psi^{u}(\gamma'_{v}(t_{0}))\leq K_{12}A^{m/(m+2)},\label{eqn: psi_bdd_0_neg}
\end{equation}
and $|t_{0}|\leq A^{-m/(m+2)}$ by \eqref{eqn: bdd_t0_tv}. 
Taking $T=A^{-m/(m+2)}$ again in Lemma \ref{lem: est_trace_ric_curv}(2),
we have 
\[
k_{13}A^{m/(m+2)}\leq-\psi^{u}(v)\leq K_{13}A^{m/(m+2)}.
\]
\end{itemize}

\end{proof}

\begin{rem} \label{rem: no_flat_strip_cond}

The absence of flat strips is crucial in Proposition \ref{prop: hold_cont_geom_pot} and Theorem \ref{Main-result-2}; otherwise, the H\"{o}lder continuity does not hold.  Here is a counterexample: consider a surface of revolution generated by 
$$f(x)=
 \begin{cases}
    |x+0.5|^{m+2}+1,   &  x<-0.5, \\
    1,
        &  -0.5\leq x\leq 0.5,\\
         (x-0.5)^{m+2}+1,   &  x>0.5.
 \end{cases}   
 $$
 The flat strip is the part with $-0.5\leq x\leq 0.5$,  and the metric satisfies both curvature conditions by Lemma \ref{lem: warped_product_curv}.  Let $v$ be a unit vector with $x=0.5$ and angle $\phi>0$,  meaning that $v$ is a vector exiting the flat strip.   At time $t=-\csc\phi$,  the geodesic $\gamma_v$ enters the strip with vector $v'=\gamma'_v(-\csc\phi)$.  By symmetry,  the angle of $v'$ is $-\phi$.  Assume that the  H\"{o}lder continuity in Proposition \ref{prop: hold_cont_geom_pot} holds for both $v$ and $v'$,  namely,  $\psi^u(v),  \psi^u(v')\approx -\phi^{m/(m+2)}$. 
As $\gamma_v(t)$ is in the flat strip for $t\in [-\csc\phi,  0]$,  $\psi^u(f_t v)$ satisfies the Riccati equation $u'+u^2=0$,  and the solution is 
$$\psi^u(f_t v)=(t+\psi^u(v)^{-1})^{-1}.$$
Plug in $\psi^u(v)\approx - \phi^{m/(m+2)}$ and $t=-\csc\phi$,  we have
$$\psi^u(v')=(-\csc\phi+\psi^u(v)^{-1})^{-1}\approx -\phi.$$
Contradictory to $\psi^u(v')\approx -\phi^{m/(m+2)}$. 

\end{rem}

The Hölder continuity of $\vp^{u}$ is an important,
yet still open, question in nonpositively curved geometry. Only some
partial results are known for surfaces under certain conditions, including
\cite[Lemma 3.3]{gerber1999holder} where Gerber and Wilkinson show
the Hölder continuity of $\vp^{u}$ for Type 2 surfaces. Since Ricci
curvature and Gaussian curvature are the same thing for surfaces,
using Theorem \ref{Main-result-2} we obtain a partial generalization
of \cite[Lemma 3.3]{gerber1999holder}: 
\begin{cor}
Under the same assumptions as \textup{Theorem \ref{Main-result-2}},
$\psi^{u}$ and $\vp^{u}$ are Hölder continuous in a small neighborhood
of $\Sing$. 
\end{cor}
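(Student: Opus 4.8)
The plan is to combine the two-sided estimate of Theorem \ref{Main-result-2} with a short-range/long-range dichotomy on pairs of vectors and a Riccati comparison on the uniformity blocks $\Reg(\eta)$. Two preliminary reductions come first. (i) Both $\psi^{u}$ and $\vp^{u}$ vanish identically on $\Sing$: since $M$ has no flat strips, $\Sing=T^{1}T_{0}$, and for $\v\in\Sing$ the curvature operator $\mathcal{K}(\dot\gamma_{\v}(t))$ is zero for all $t$, so \eqref{eq:operator-Riccati} forces $U^{u}_{\v}\equiv 0$; hence $\psi^{u}(\v)=-\mathrm{tr}\,U^{u}_{\v}(0)=0$, and then $\vp^{u}(\v)=0$ by \eqref{eqn: diff_phi_psi_1st}. (ii) Near $\Sing$ one has $d(v,\Sing)\asymp|x_{v}|+|\phi_{v}|$ (in Fermi coordinates $\Sing$ is locally $\{x_{v}=\phi_{v}=0\}$, the $s$-directions being tangent to $T_{0}$), and $|x_{v}|^{m/2}\le|x_{v}|^{m/(m+2)}$ for $|x_{v}|$ small; so Theorem \ref{Main-result-2} provides a neighborhood $N$ of $\Sing$ and $C>0$ with
\[
0\le-\psi^{u}(v)\le C\,d(v,\Sing)^{\alpha}\quad\text{and}\quad 0\le-\vp^{u}(v)\le C\,d(v,\Sing)^{\alpha}\qquad(v\in N),
\]
where $\alpha:=\tfrac{m}{m+2}\in(0,1)$.

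Next I would run the dichotomy on pairs $v,w$ in a smaller neighborhood $N'$ of $\Sing$. If $d(v,w)\ge\tfrac13\max\bigl(d(v,\Sing),d(w,\Sing)\bigr)$, the trivial bound already works:
\[
|\psi^{u}(v)-\psi^{u}(w)|\le|\psi^{u}(v)|+|\psi^{u}(w)|\le C\bigl(d(v,\Sing)^{\alpha}+d(w,\Sing)^{\alpha}\bigr)\le 2\cdot 3^{\alpha}C\,d(v,w)^{\alpha}.
\]
Otherwise, say $d(v,\Sing)\ge d(w,\Sing)$, so $d(v,w)<\tfrac13 d(v,\Sing)$; then the minimizing Sasaki geodesic from $v$ to $w$ stays at distance $>\tfrac23 d(v,\Sing)$ from $\Sing$, hence inside a uniformity block $\Reg(\eta)$ with $\eta=\eta\bigl(d(v,\Sing)\bigr)>0$ (a lower bound $\eta(r)\gtrsim r^{\beta}$ for some $\beta>0$ being read off from the Riccati analysis of Sections \ref{sec: higher dim}--\ref{sec: GW surface}). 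On that block I would estimate $|\psi^{u}(v)-\psi^{u}(w)|$ by comparing the unstable solutions of \eqref{eq:operator-Riccati} along $\gamma_{v}$ and $\gamma_{w}$: the backward geodesics $\gamma_{v}|_{(-\infty,0]}$ and $\gamma_{w}|_{(-\infty,0]}$ shadow one another for a time $T^{\ast}\asymp\eta^{-1}\log\bigl(d(v,w)^{-1}\bigr)$, over which $\|\mathcal{K}(\dot\gamma_{v}(t))-\mathcal{K}(\dot\gamma_{w}(t))\|\lesssim d(v,w)$ because the metric is $C^{m+2}$; comparing the two solutions on $[-T^{\ast},0]$ with the common hyperbolicity/curvature floor $\eta$ --- just as \cite{eschenburg1990comparison} is used in Lemma \ref{lem: grow_rate_x''_HD} --- controls $\|U^{u}_{v}(0)-U^{u}_{w}(0)\|$, and taking traces gives $|\psi^{u}(v)-\psi^{u}(w)|\le L(\eta)\,d(v,w)^{\theta}$ for a fixed $\theta\in(0,1]$ and an explicit $L(\eta)$. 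Since $\eta\gtrsim d(v,\Sing)^{\beta}>\bigl(3d(v,w)\bigr)^{\beta}$, this converts into $|\psi^{u}(v)-\psi^{u}(w)|\lesssim d(v,w)^{\alpha'}$ for some $\alpha'>0$; together with the first case, $\psi^{u}$ is H\"older on $N'$.

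The main obstacle is precisely the bookkeeping in this last step: showing that the H\"older norm $L(\eta)$ of $\psi^{u}$ on $\Reg(\eta)$ grows no faster than a negative power of $\eta$ that is dominated by the decay $-\psi^{u}(v)\lesssim d(v,\Sing)^{\alpha}$ of Theorem \ref{Main-result-2} (with more care one recovers the sharp exponent $\alpha$). For surfaces this is \cite[Theorem I]{gerber1999holder}; in higher dimension it is exactly the order $m$ Ricci curvature bounds of Theorem \ref{Main-result-2} that keep the comparison solutions --- hence $L(\eta)$ --- under control, through Lemma \ref{lem: est_trace_ric_curv} and the bouncing/asymptotic/crossing estimates of Section \ref{sec: higher dim}, which I would feed directly into the comparison rather than invoke an abstract hyperbolic-block regularity theorem. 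Finally, $\vp^{u}$ enjoys the same three features used above --- vanishing on $\Sing$, the bound on $N$, and H\"older continuity on each $\Reg(\eta)$ (the last because $\vp^{u}-\psi^{u}$ is a smooth function of $U^{u}_{v}(0)$ and $\mathcal{K}(v)$ by \eqref{eqn: diff_phi_psi_1st}) --- so the identical argument shows $\vp^{u}$ is H\"older near $\Sing$; equivalently, by Proposition \ref{prop:comparability} and Theorem \ref{Main-result-2} the difference $\vp^{u}-\psi^{u}$ decays like $d(v,\Sing)^{3\alpha}$ near $\Sing$ and is H\"older there by the same dichotomy, whence so is $\vp^{u}=\psi^{u}+(\vp^{u}-\psi^{u})$.
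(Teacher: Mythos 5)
The paper's own proof of this corollary is essentially one line: since $\psi^{u}$ and $\vp^{u}$ vanish on $\Sing$, Theorem \ref{Main-result-2} together with Proposition \ref{prop:comparability} gives, near $\Sing$,
$-\psi^{u}(v)\approx-\vp^{u}(v)\approx|x_{v}|^{m/2}+|\phi_{v}|^{m/(m+2)}\lesssim d(v,\Sing)^{m/(m+2)}$,
and this H\"older modulus relative to $\Sing$ (in the spirit of Gerber--Wilkinson's Lemma 3.3) is what the paper means by ``H\"older continuous near $\Sing$.'' Your reductions (i)--(ii) together with the first (``trivial bound'') branch of your dichotomy reproduce exactly this content, and that part of your write-up is correct and matches the paper.

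The problem is the second branch, which you yourself label ``the main obstacle'': it is where your proposal has a genuine gap if the statement is read as pairwise H\"older continuity between arbitrary nearby vectors. Three of its ingredients are neither in the paper nor supplied by you. First, the lower bound $\lambda(v)\gtrsim d(v,\Sing)^{\beta}$ cannot be ``read off'' from Sections \ref{sec: higher dim}--\ref{sec: GW surface} or from Theorem \ref{Main-result-2}: those results control $x_{v}(\tau)$, $\phi_{v}(\tau)$ and the \emph{trace} of $U^{u}_{v}(0)$, not its least eigenvalue, so in dimension $>2$ no bound on $\lambda^{u}(v)$ (let alone on $\lambda=\min(\lambda^{u},\lambda^{s})$) follows. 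Second, the assertion that $\|\mathcal{K}(\dot\gamma_{v}(t))-\mathcal{K}(\dot\gamma_{w}(t))\|\lesssim d(v,w)$ on all of $[-T^{\ast},0]$ with $T^{\ast}\asymp\eta^{-1}\log\bigl(d(v,w)^{-1}\bigr)$ is false as stated: the backward orbits separate up to exponentially, so at time $-t$ the curvature difference is only $O\bigl(e^{ct}d(v,w)\bigr)$, and balancing this growth against the Riccati contraction --- which at hyperbolicity level $\eta$ is only of order $e^{-c\eta t}$ --- is precisely the nontrivial computation that would produce your $L(\eta)$, $\theta$, and the final exponent $\alpha'$. Third, no argument is given that $L(\eta)$ blows up at most polynomially in $\eta^{-1}$, which your conversion to a uniform exponent requires. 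Since all of this is deferred, the pairwise estimate deep inside $\Reg$ is not proved; under the paper's (weaker) reading the first branch already suffices and the second branch is unnecessary, but under your stronger reading the essential step is missing.
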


%%%%%%Ch7%%%%%

\section{Sufficient criteria for the pressure gap\label{sec: pressure gap}}

Let $M$ be a closed Riemannian manifold and $\{f_{t}\}_{t\in\R}$
the geodesic flow on $T^{1}M$. In this section, we will describe
an abstract result to establish the pressure gap for a given potential
$\vp\colon T^{1}M\to\R$. But first, we need to introduce the notion
of specification in the following subsection.

\subsection{Specification}

While there are various definitions for it in the literature, roughly
speaking specification is a property that allows one to find an orbit
segment that shadows any given finite number of orbit segments at
a desired scale with controlled transition time. It was introduced
by Bowen \cite{bowen1974some} as one of the conditions to establish
the uniqueness of the equilibrium states for potentials over uniformly
hyperbolic maps. The specification still plays a vital role in many generalizations
of this result \cite{burns2018unique,chen2020unique,chen2021properties}.
The following version of the specification is from \cite[Theorem 4.1]{burns2018unique}. 
\begin{defn}[Specification]
We say a set of orbit segments $\CC$ satisfies the \textit{specification
at scale $\rho>0$} if there exists $\T>0$ such that given finite
orbit segments $(v_{1},t_{1}),\ldots,(v_{k},t_{k})\in\CC$ and $T_{1},\ldots,T_{k}\in\R$
with $T_{j+1}-T_{j}\geq t_{j}+\T$ for all $1\leq j\leq k-1$, there
is $w\in T^{1}M$ such that $f_{T_{j}}w\in B_{t_{j}}(v_{j},\rho)$
for all $1\leq j\leq k-1$. 
\end{defn}

This is a stronger version of the specification that appears in \cite{burns2018unique}
providing flexibility in the transition time. However, in practice,
we will always take $T_{j}$'s such that $T_{j+1}-T_{j}=t_{j}+\T$;
that is, the transition time is exactly equal to $\T$.

\subsection{Abstract result on the pressure gap}

\label{subsec: abstract result}

We now list the conditions together which establish the pressure gap.
Let $M$ be a closed rank-1 manifold containing a codimension-1 flat subtorus, with $\Sing$ induced by the subtorus. As mentioned in the introduction, one can easily extend results
in this section to multiple subtori scenarios.
However, for brevity, we stick to this simpler assumption.

By setting 
\[
\CC(\eta)=\{(v,t)\in T^{1}M\times\R^{+}\colon v,f_{t}v\in\Reg(\eta)\}
\]
to be the set of orbit segments with endpoints in $\Reg(\eta)$, we
require that the geodesic flow $\{f_{t}\}$ and the potential $\vp\colon T^{1}M\to\R$
satisfy the following properties: 
\begin{enumerate}
\item Singular set zero entropy property: $h_{\mathrm{top}}(\mathcal{F}|_{\Sing})=0$. 
\item Specification property: For any $\eta>0$ and $\rho>0$, the orbit
segments $\CC(\eta)$ satisfies the specification at scale $\rho$. 
\item Shadowing property: There exists $R>0$ such that for every $t>0$
there exists a map 
\[
\Pi_{t,R}\colon\Sing\to T^{1}M
\]
with the following properties: denoting by $w_{\v}:=\Pi_{t,R}(\v)$
the shadowing vector of an arbitrary $\v\in\Sing$, 
\begin{enumerate}
\item The conclusion of Lemma \ref{lem: angle_lower_bdd_reg} holds for $\Pi_{t,R}$. 
\item For any $\ep>0$, there exists $L:=L(\ep,R)$ such that for any $t>2L$,
the vector $w_{\v}$ satisfies 
\[
d(f_{\tau}w_{\v},\Sing)<\ep
\]
for all $\tau\in[L,t-L]$. 
\end{enumerate}
\item Special Bowen property: For any $\d>0$, there exists $C=C(\d,R)>0$
independent of $t$ such that 
\[
\int_{0}^{t}\vp(f_{\tau}u)d\tau-\int_{0}^{t}\vp(f_{\tau}\v)d\tau>-C
\]
for any $u\in B_{t}(w_{\v},\d)$. 
\end{enumerate}
%Regarding the last special Bowen property, another (perhaps more intuitive) way to think of it is ${\displaystyle \Big|\int_{0}^{t}\vp(f_{\tau}u)d\tau-\int_{0}^{t}\vp(f_{\tau}\v)d\tau\Big|<C}$. 
\begin{prop}
\label{prop: abstract pressure gap} Suppose the geodesic flow $\{f_{t}\}_{t\in\R}$
and the potential $\vp$ satisfy the above listed conditions \textup{(1),
(2), (3)} and \textup{(4)}. Then, $\vp$ has a pressure gap. 
\end{prop}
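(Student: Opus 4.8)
The plan is to run the specification-based scheme underlying property~(1): convert orbit segments lying in $\Sing$ into genuinely regular ones with controlled $\vp$-integral using the shadowing map of property~(3), then concatenate many of them via the specification property~(1) of $\CC(\eta)$ into an exponentially large (in the number of blocks), uniformly separated family, and read off a lower bound on $P(\vp)$ that beats $P(\Sing,\vp)$. First I would record the reductions: by property~(2), $P(\Sing,\vp)=P(\Sing,0)=h_{\mathrm{top}}(\F|_{\Sing})$, and since $\Sing$ is the unit tangent bundle of a flat submanifold ($T_0$, or a flat strip containing it), whose geodesic flow has zero topological entropy, $P(\Sing,\vp)=0$. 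It then remains to show $P(\vp)>0$.

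For the construction, fix $\delta>0$ small (how small is the delicate point, see below), put $\rho:=\delta/10$, let $\eta=\eta(R)$ be the uniformity level of Lemma~\ref{lem: angle_lower_bdd_reg}(3), let $\T=\T(\rho)$ be the transition time granted by~(1) for $\CC(\eta)$ at scale $\rho$, and let $C=C(\rho,R)$ be the constant of~(3c); put $D:=C+\T\|\vp\|_\infty$, a constant once $\delta$ is fixed. For $t\gg0$, take a maximal $(t,\delta)$-separated $E_t\subset\Sing$ with $n_t:=\#E_t$, and set $w_\v:=\Pi_{t,R}(\v)$. By~(3a), each $(w_\v,t)\in\CC(\eta)$ and $\{w_\v:\v\in E_t\}$ is $(t,\delta/4)$-separated. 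For each word $\bar\v=(\v_1,\dots,\v_N)\in E_t^{\,N}$, applying~(1) to $(w_{\v_1},t),\dots,(w_{\v_N},t)$ at scale $\rho$ with transitions of length $\T$ produces $u_{\bar\v}$ which $\rho$-shadows the $j$-th block on $[T_j,T_j+t]$, $T_j=(j-1)(t+\T)$. Then distinct words give orbits at $d_{N(t+\T)}$-distance at least $\delta/4-2\rho=\delta/20$, so the $n_t^{\,N}$ orbits $\{u_{\bar\v}\}$ form a $(N(t+\T),\delta/20)$-separated set; cutting $[0,N(t+\T)]$ into shadowing windows (where~(3c) gives $\int\vp>-C$) and transition windows (where $\int\vp\ge-\T\|\vp\|_\infty$) gives $\Phi(u_{\bar\v},N(t+\T))\ge -ND$; hence $\Lambda(\vp,\delta/20,N(t+\T))\ge(n_t e^{-D})^N$. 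Letting $N\to\infty$ and using monotonicity of $\Lambda$ in the scale,
\[
P(\vp)\ \ge\ \limsup_{N\to\infty}\frac{\log\Lambda(\vp,\delta/20,N(t+\T))}{N(t+\T)}\ \ge\ \frac{\log n_t-D}{t+\T}.
\]

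The step I expect to be the real obstacle is making this right side positive, i.e. upgrading $P(\vp)\ge P(\Sing,\vp)$, which the construction yields for free, to a strict inequality. Since $D$ is $t$-independent it suffices to have $n_t>e^{D}$ for some admissible $t$ and some small $\delta$; as $\delta\to0$ the number $n_t$ of $(t,\delta)$-separated singular orbit segments grows polynomially in $1/\delta$, so what must be checked is that this outpaces the growth of $D=D(\delta)$. A black-box use of~(1) lets the transition cost $\T(\rho)\|\vp\|_\infty$ enter $D$ and grow roughly like $\|\vp\|_\infty\log(1/\delta)$, which need not be beaten; the resolution is the geometric content of~(3), namely that the shadows and the transitions can be taken to stay near $\Sing$ — this is what (3b) certifies for the shadows, and what the curvature bounds behind~(3c) provide for the transitions — so that the per-block loss is genuinely \emph{small}, not merely finite, and a modest alphabet of shadows of singular orbit segments wins. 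Granting this, one fixes such a $\delta$ and $t$, gets $P(\vp)\ge(\log n_t-D)/(t+\T)>0=P(\Sing,\vp)$, and is done; the remaining points — injectivity of $\bar\v\mapsto u_{\bar\v}$ (from the uniform separation), admissibility of the gap times in~(1), and that $C(\rho,R),\T(\rho)$ are constants once $\delta$ is frozen — are routine.
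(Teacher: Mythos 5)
Your construction only extracts entropy from $\Sing$ itself: per block of length $t+\T$ you gain $\log n_t$, where $n_t$ is the cardinality of a $(t,\delta)$-separated subset of $\Sing$, and you pay a fixed cost $D=C+\T\|\vp\|$. This closes only if $\log n_t>D$ for some admissible $t$, and the listed hypotheses give you no way to guarantee that. In the higher-dimensional torus case $n_t\to\infty$ polynomially in $t$, so there your argument does work; but the proposition also covers the case where $\Sing$ consists of finitely many closed orbits (type 1 or type 2 \emph{surfaces} without flat strips --- including the prototype surface of revolution), and there $n_t$ is bounded in $t$ (roughly $2\,\mathrm{length}(T_0)/\delta$), while $D$ contains the uncontrolled term $\T(\rho)\|\vp\|$ coming from the black-box specification. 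Your own ``resolution'' --- that the transitions can be taken to stay near $\Sing$ so that the per-block loss is ``genuinely small'' --- is not a consequence of conditions (1)--(3): the specification property says nothing about where the connecting orbit travels during the transition time, and (3c) only controls Bowen balls around shadowing orbits, not transitions. So the strict inequality is exactly the point left unproved.

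The paper gets the strict gap from a different entropy source, and this is where condition (3b) (which your construction never actually uses) is essential. One fixes $\xi=\T+2L$ and, for small $\alpha$, counts the $\binom{N-1}{\alpha N-1}$ ways of choosing which of the times $\xi,2\xi,\dots,(N-1)\xi$ are break points between shadowing blocks; for each choice one glues shadows of singular segments of the prescribed lengths by specification. Words built over \emph{different} partitions are $(N\xi,\delta/3)$-separated because at a time $s$ lying in one partition but not the other, one orbit is near a block endpoint, hence at distance about $R$ from $\Sing$, while the other is deep inside a long block and hence within $R/2$ of $\Sing$ by (3b). The combinatorial factor contributes entropy of order $\alpha\log(1/\alpha)$ per unit time, which beats the linear loss $\alpha(\T\|\vp\|+C)$ for $\alpha$ small, giving $P(\vp)>P(\Sing,\vp)$ regardless of whether separated sets inside $\Sing$ proliferate. (Your preliminary reduction $P(\Sing,\vp)=h_{\mathrm{top}}(\Sing)=0$ is correct in this setting but does not repair the missing entropy: with $n_t$ bounded, showing $P(\vp)>0$ still requires producing entropy from orbits that spend most of their time near $\Sing$, which is precisely the partition-counting mechanism above.)
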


Assuming we have Proposition \ref{prop: abstract pressure gap}, we finish the proof of Theorem \ref{Main-result}:

\begin{proof}[Proof of Theorem \ref{Main-result}]
Each condition listed above can be verified as follows. By design, we know $h_{\mathrm{top}}(\mathcal{F}|_{\Sing})=0$.
The specification property (2) is already established in \cite{burns2018unique} for the geodesic flow $\{f_{t}\}$ over rank 1 nonpositively curved manifold. With $\Pi_{t,R}$ defined as in \eqref{eq: Pi} via the Fermi coordinates, (3a) is immediate as we have already proved Lemma \ref{lem: angle_lower_bdd_reg}. For (3b), we can take $L$ to be $(Q/\ep)^{m/2}$ where $Q$ is the constant from Proposition \ref{prop: estm_x_psi} and \ref{prop: estm_x_psi_GW}.  Lastly,  (4) follows from Proposition \ref{prop: bowen_ball_int_esti}. Hence, $\vp$ has the pressure gap by the above proposition. 
\end{proof}

\textcolor{black}{We note that the proof of Proposition \ref{prop: abstract pressure gap}
draws inspiration from \cite[Theorem B]{burns2018unique}. However,
leveraging the singular set’s zero entropy property, Peres’ Lemma
allows us to circumvent several technicalities and arrive at a more
straightforward proof than that presented in \cite{burns2018unique}.
See Remark \ref{rem: difference} for more details. }
\begin{thm}
\cite[Lemma 2]{Peres88} \label{thm:Peres_Lemma} Let $\mathcal{F}=\{f_{t}\}$
be a continuous flow on a compact space $X$, and $\mu$ be an $\mathcal{F}-$invariant
probability measure. Then for every continuous potential $\vp:X\to\mathbb{R}$
there exists some $v\in X$ 
\[
\frac{1}{T}\int_{0}^{T}\vp(f_{\tau}v)d\tau\geq \int_{X}\vp d\mu
\]
for all $T>0$. 
\end{thm}

The authors believe this Peres' result is known among the experts;
however, we cannot find proof of the flow version
in the literature. For the completeness, we give a proof in the appendix;
see Theorem \ref{thm:proof-peres}.
\begin{proof}[Proof of Proposition \ref{prop: abstract pressure gap}]
This proof has three steps. The first step is using the singular
set zero entropy property (1) and Theorem \ref{thm:Peres_Lemma} to
bound $P(\vp,\Sing)$ by the integration of some special $\v$ in
$\Sing$ along the flow. The second step is using the specification
property (2) and shadowing property (3) to create a $(t,\delta)-$separated
set that bounds $P(\vp,\Sing)$. The last step is using the special
Bowen property (4) to estimate the pressure.

Notice that without loss of generality, we may assume $P(\vp,\Sing)=0$;
otherwise, we consider $\varphi-P(\vp,\Sing)$. Hence, it is sufficient
to show that for large $t$ there exists a $(t,\delta)-$separated set
$F_{t}$ on $T^{1}M$ such that 
\begin{equation}
\lim_{t\to\infty}\frac{1}{t}\sum_{w\in F_{t}}e^{\int_{0}^{t}\vp(f_{\tau}w)d\tau}>0.\label{eq: PSing_approx}
\end{equation}

The first step is to find a special singular vector $\v$ for shadowing.
Notice that \( h_{\mathrm{top}}(\mathcal{F}|_\Sing) = 0 \) implies that the function \( \mu \mapsto h_{\mu}(\mathcal{F}|_\Sing) \) is constantly zero for any invariant measure \( \mu \) and, in particular, is upper semicontinuous. Consequently, any weak-* limit of a sequence of invariant measures approximating the pressure is an equilibrium state. That is 
\[
0=P(\vp,\Sing)=h_{\mu}(\mathcal{F})+\int_{\Sing}\vp d\mu.
\]
By the singular set zero entropy property (1), we know $h_{\mu}(\mathcal{F})=0$;
and thus, $\int_{\Sing}\vp\ d\mu=0$. By Peres' Lemma (Theorem \ref{thm:Peres_Lemma}),
there exists $\v\in\Sing$ such that for all $T>0$ 
\begin{equation}\label{eqn: peres_phi}
\int_{0}^{T}\vp(f_{\tau}\v)d\tau\geq T\int_{\Sing}\vp\ d\mu=0.
\end{equation}

The second step is to create such a $(t,\delta)-$separated
set $F_{t}$ on $T^{1}M$. For $R>0$ given in the shadowing property
(3), from Lemma \ref{lem: angle_lower_bdd_reg} we get $\eta>0$ where
$w,f_{t}w\in\Reg(\eta)$ for any $w=\Pi_{t,R}(\v)$. We also obtain
the constant $L$ from (3b) corresponding to $\ep=R/2$.

We now set $\xi$ be a constant such that $\xi>\T+2L$, and for each
$N\in\N$ consider 
\[
\mathcal{A}:=\{\xi,2\xi,\ldots,(N-1)\xi\}\subseteq[0,N\xi]
\]
For any small $\alpha>0$ such that $\alpha N\in\N$, consider any
size $(\alpha N-1)$ subset 
\[
J=\{N_{1}\xi,\ldots,N_{\alpha N-1}\xi\}\subset\mathcal{A}
\]
with $N_{i}\in\{1,\ldots,N-1\}$. Setting $N_{0}=0$ and $N_{\alpha N}=N$,
such a subset can be viewed as a partition of the interval $[0,N\xi]$
into $\alpha N$ subintervals, each of length $n_{i}\xi$ for $n_{i}=N_{i}-N_{i-1}$.
We denote $\mathbb{J}_{N}^{\alpha}=\{J\subset\mathcal{A}:\ \#J=\alpha N-1\}$,
and we know $\#\mathbb{J}_{N}^{\alpha}={N-1 \choose \alpha N-1}$.

Given $J\in\mathbb{J}_{N}^{\alpha}$, we consider singular vectors
$\v_{i}^{J}:=f_{N_{i}\xi}(\v)$ and the corresponding regular orbits
$\{(w_{i}^{J},t_{i})\}$ where $w_{i}^{J}:=\Pi_{n_{i}\xi-\T,R}(\v_{i}^{J})$
and $t_{i}=n_{i}\xi-\T$ for $i=1,\cdots,\alpha N$. Using the specification
property (2), one can find a vector $w_{J}\in T^{1}M$ which shadows
orbits $\{(w_{i}^{J},t_{i})\}_{i=1}^{\alpha N-1}$ (\textcolor{black}{see
Fig}. \ref{fig:shadowing}), that is, for $i=1,\cdots,\alpha N$ 
\[
f_{N_{i-1}\xi}w_{J}\in B_{t_{i}}(w_{i}^{J},\rho).
\]
\begin{figure}
\includegraphics[scale=0.5]{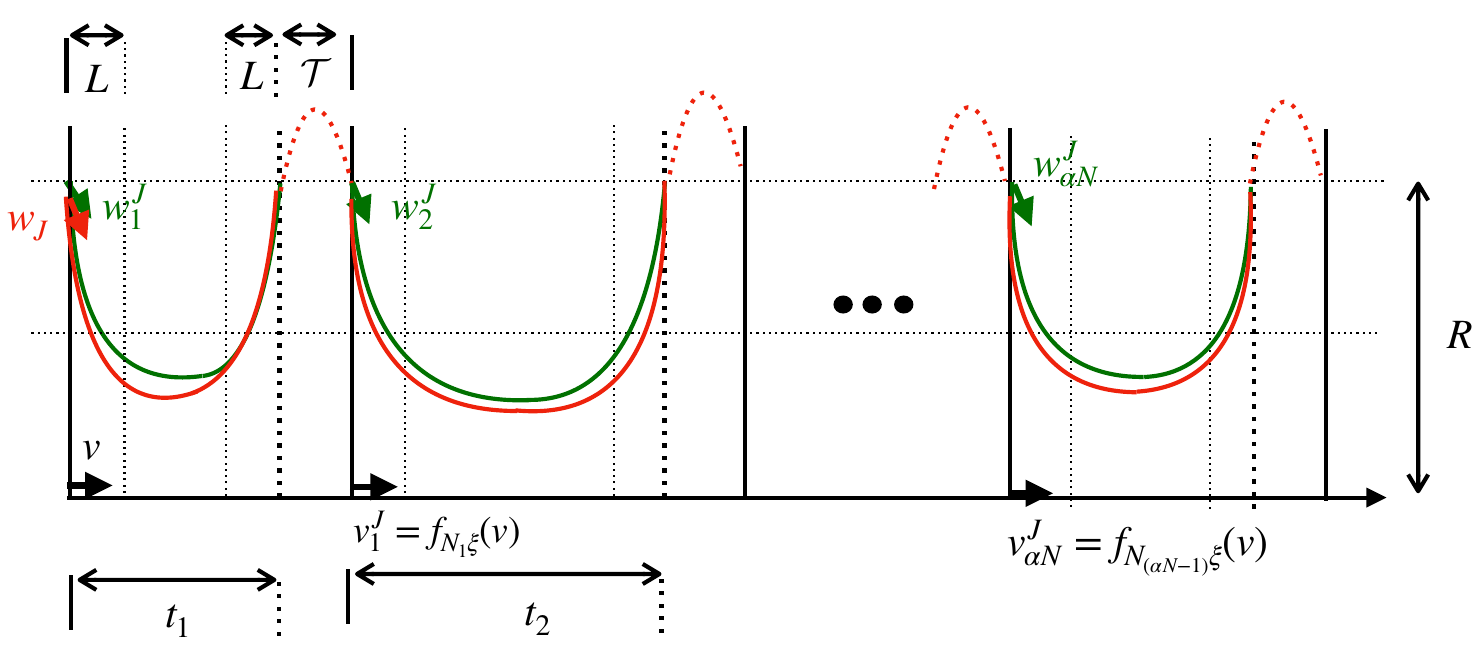}

\caption{Shadowing orbit }
\label{fig:shadowing}
\end{figure}

We claim that $F_{N\xi}:=\{(w_{J},N\xi):J\in\mathbb{J}_{N}^{\alpha}\}$
is a $(N\xi,R/2-2\rho)$-separated set. To see this, let $J_{j}=\{N_{1}^{j}\xi,\cdots,N_{\alpha N-1}^{j}\xi\}$
for $j=1,2$, and $m=\min\{n:\ N_{n}^{1}\neq N_{n}^{2}\}.$ Without
loss of generality, suppose $N_{m}^{1}<N_{m}^{2}$, then the claim
follows from the inequalities below (\textcolor{black}{see Fig. \ref{fig:separated}}
): 
\[
d(f_{N_{m}^{1}\xi -\T}(w_{J_{1}}),\Sing)>R-\rho\text{ and }d(f_{N_{m}^{1}\xi-\T}(w_{J_{2}}),\Sing)<\frac{R}{2}+\rho.
\]

\begin{figure}
\includegraphics[scale=0.5]{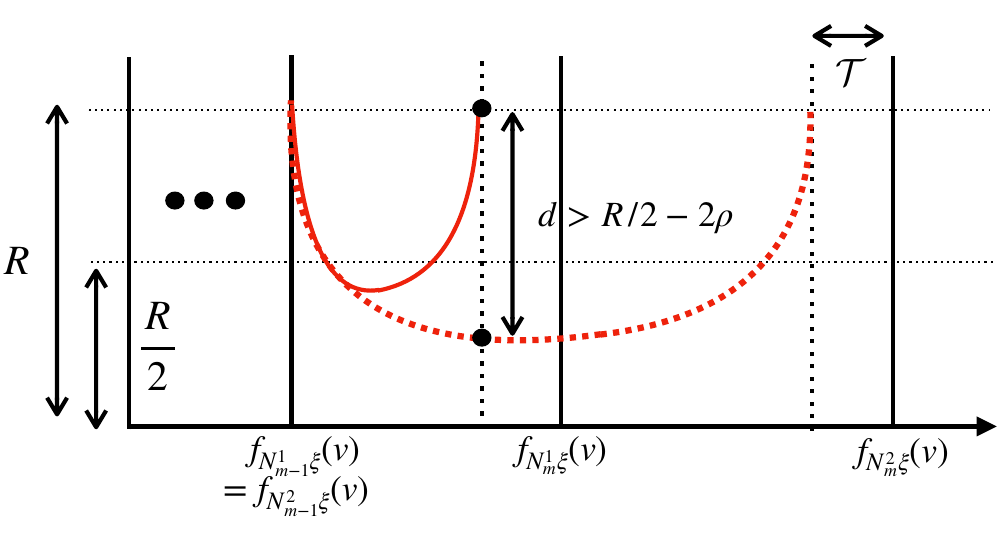}

\caption{Separated set}
\label{fig:separated}
\end{figure}
The last step is using the special Bowen property (4) to bound the
pressure from below. For $(w_{J},N\xi)\in F_{N\xi}$, the integral
of $\vp(f_{\tau}w_{J})$ during each transition period is bounded
below by $-\mathcal{T}\|\vp\|$, by the special Bowen property (4) and \eqref{eqn: peres_phi}
we get 
\begin{align}
\int_{0}^{N\xi}\vp(f_{\tau}w_{J})d\tau & =\sum_{i=1}^{\alpha N}\int_{N_{i-1}\xi}^{N_{i}\xi}\vp(f_{\tau}w_{J})d\tau\label{eqn: changes}\\
 & \geq\sum_{i=1}^{\alpha N}\int_{0}^{t_{i}}\vp(f_{\tau}\v_{i}^{J})d\tau-\alpha NC-2\alpha N||\vp||\T\nonumber \\
 & \geq\sum_{i=1}^{\alpha N}\int_{N_{i-1}\xi}^{N_{i}\xi}\vp(f_{\tau}\v_{i}^{J})d\tau-\alpha N(3\T\|\vp\|+C)\nonumber \\
 & =\int_{0}^{N\xi}\vp(f_{\tau}\v)d\tau-\alpha N(3\T\|\vp\|+C)\geq -\alpha N(3\T\|\vp\|+C).\nonumber 
\end{align}
Recall that $\#F_{N\xi}=\#\mathbb{J}_{N}^{\alpha}={N-1 \choose \alpha N-1}$,
summing the above inequality over all possible subsets $J$ gives
\begin{align*}
\sum_{w_{J}\in F_{N\xi}}e^{\int_{0}^{N\xi}\vp(f_{\tau}w_{J})d\tau} & \geq{N-1 \choose \alpha N-1}e^{-\alpha N(3\T\|\vp\|+C)}.
\end{align*}
As $F_{N\xi}$ is a $(N\xi,\delta)$-separated set (where $\delta<R/2-\rho$
with $R$ and $\rho$ are arbitrary), this implies that 
\[
{\displaystyle P(\vp)\geq\lim_{N\to\infty}\frac{1}{N\xi}\log\Big(\sum_{w_{J}\in F_{N\xi}}e^{\int_{0}^{N\xi}\vp(f_{\tau}w_{J})d\tau}\Big).}
\]
Combining the last two inequalities with ${N-1 \choose \alpha N-1}\geq\alpha e^{\left(-\alpha\log\alpha\right)N}$, one establishes
\[
P(\vphi)\geq-\frac{\alpha}{\xi}\log\alpha-\frac{\alpha (3\T\|\vp\|+C)}{\xi}.
\]
In particular, we obtain the pressure gap by choosing $\alpha$ sufficiently
small in $(0,e^{-(3\T\|\vp\|+C)})$.
\end{proof}
\begin{rem}
\label{rem: difference} \textcolor{black}{We list below several main
differences between this proof and the proof of \cite[Theorem B]{burns2018unique}.}
\begin{enumerate}
\item Our proof utilize the fact $h_{\mathrm{top}}(\mathcal{F}|_{\Sing})=0$ and Peres' Lemma (Theorem \ref{thm:Peres_Lemma}) to simplify two steps (that is \cite[Sec. 8.2 \& 8.3]{burns2018unique} in \cite[Theorem B]{burns2018unique}. Precisely, using $h_{\mathrm{top}}(\mathcal{F}|_{\Sing})=0$ and Peres' Lemma, we can easily find a vector in the singular set to shadow with regular orbit segments with good control of the integration of potentials. 
\item Equation (\ref{eqn: changes}) is another major difference. The extra constant $C$ in (\ref{eqn: changes}) comes from the condition (4) given in Section \ref{subsec: abstract result}.
This constant $C$ allows us to have some wiggling room for the potential
without losing the pressure gap.
\end{enumerate}
\end{rem}

%%%%%%Ch7%%%%%

\appendix
%dummy comment inserted by tex2lyx to ensure that this paragraph is not empty

\section{Ricci curvature bound and radial curvature bound comparison}

{\label{sec:Appendix}}

Recall that $M$ is a $n-$dimensional manifold with nonpositive sectional
curvature, and $T_{0}\subset M$ is a totally geodesic $(n-1)$-subtorus.
We assume that $K(\sigma)=0$ for any $x\in T_{0}$ and any 2-plane
$\sigma\subset T_{x}M$. On the universal cover $\widetilde{M}$,
we define the Fermi coordinate $(s,x)$ near $\widetilde{T}_{0}$
in the following way: $s$ is the coordinate on $\widetilde{T}_{0}$,
and $x$ measures the signed distance on $\widetilde{M}$ to $\widetilde{T}_{0}$.
$s=\text{const.}$ is always a geodesic perpendicular to $\widetilde{T}_{0}$.
The Riemannian metric near $\widetilde{T}_{0}$ is 
\begin{equation}
g=dx^{2}+g_{x},\quad|x|\leq\ep\label{eqn: metric_HD-1}
\end{equation}
where $g_{x}$ is the Riemannian metric on $\widetilde{T}_{x}:=\widetilde{T}_{0}\times\{x\}$.
In particular, $g_{0}$ is the Euclidean metric on $\widetilde{T}_{0}.$

If $g_{x}$ is a warped product, namely, $g=dx^{2}+f(x)^{2}g_{0}$.
We have $f(0)=1$. Since $T_{0}$ is totally geodesic, we have $f'(0)=0$. 
\begin{lem}\label{lem: warped_product_curv}
If $g=dx^{2}+f(x)^{2}g_{0}$, then the following conditions are equivalent: 

\begin{enumerate}
\item There exists $C_{1},C_{2},\ep>0$ such that 
\[
-C_{1}|x_{v}|^{m}\leq K_{\perp}(v)\leq-C_{2}|x_{v}|^{m},\text{for any \ensuremath{v\perp X} with }|x_{v}|<\ep.
\]
\item There exists $C_{1},C_{2},\ep>0$ so that 
\[
-C_{1}|x_{v}|^{m}\leq\text{Ric}(v)\leq-C_{2}|x_{v}|^{m},\text{for any \ensuremath{v} with }|x_{v}|<\ep.
\]
\item There exists $C_{1},C_{2},\ep>0$ so that 
\[
C_{1}|x|^{m+2}\leq f(x)-1\leq C_{2}|x|^{m+2}.
\]
\end{enumerate}
\end{lem}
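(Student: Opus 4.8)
The plan is to reduce all three conditions to two-sided power bounds on the warping function $f$ and its derivatives, after first recording the curvature of $g = dx^2 + f(x)^2 g_0$ near $\widetilde{T}_0$. Since $g_0$ is flat and the base is one-dimensional, the standard warped-product curvature formulas give, for a unit vector $v = (\sin\phi_v)X + (\cos\phi_v)u$ with $u$ a unit vector tangent to $\widetilde{T}_{x_v}$,
\[
K_\perp(v) = -\frac{f''(x_v)}{f(x_v)}, \qquad \mathrm{Ric}(v) = -\frac{f''(x_v)}{f(x_v)}\bigl(1 + (n-2)\sin^2\phi_v\bigr) - (n-2)\frac{f'(x_v)^2}{f(x_v)^2}\cos^2\phi_v .
\]
In particular $K_\perp(v)$ depends only on $x_v$, the bracket $1 + (n-2)\sin^2\phi_v$ lies in $[1,n-1]$, and evaluating Ricci at $v = X$ (so $\phi_v = \pi/2$) gives $\mathrm{Ric}(X) = -(n-1)f''/f$. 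For surfaces the $(n-2)$-terms drop and $K_\perp = \mathrm{Ric}$, recovering the known equality.

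First I would prove the elementary integration lemma: since $f(0) = 1$, $f'(0) = 0$, and $f \asymp 1$ near $0$, a two-sided bound $f''(x) \asymp |x|^m$ is equivalent, by integrating twice from $0$, to $f'(x) \asymp \mathrm{sgn}(x)|x|^{m+1}$, and integrating once more, to $f(x) - 1 \asymp |x|^{m+2}$. The point to extract is that whenever $f''/f \asymp |x|^m$ one automatically gets $f'^2/f^2 \asymp |x|^{2m+2} = o(|x|^m)$, so the $f'^2$-term in the Ricci formula is of strictly lower order. With this in hand, $(1)\Leftrightarrow(2)$ is immediate: condition $(1)$ says exactly $f''/f \asymp |x|^m$; substituting into the Ricci formula, the $f'^2$-term is negligible and the bracket is bounded above and below, so $\mathrm{Ric}(v) \asymp -|x|^m$ uniformly in $v$ and $\phi_v$, which is $(2)$; conversely $(2)$ applied to $v = X$ yields $\mathrm{Ric}(X) = -(n-1)f''/f \asymp -|x|^m$, i.e.\ $(1)$. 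The same integration lemma gives $(1)\Rightarrow(3)$ at once.

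The remaining and genuinely delicate implication is $(3)\Rightarrow(1)$, i.e.\ recovering two-sided bounds on $f''$ from two-sided bounds on $f-1$; this is where I expect the main work to be. Here the geometric hypotheses must be used: $K\le 0$ forces $f''\ge 0$, so $f$ is convex and $f'$ monotone, and the convexity difference-quotient inequalities
\[
\frac{f(x) - f(x/K)}{x - x/K} \le f'(x) \le \frac{f(Kx) - f(x)}{Kx - x} \qquad (x > 0),
\]
combined with $(3)$ and a choice of $K$ large enough that $C_1 - C_2 K^{-m-2} > 0$, pinch $f'(x) \asymp \mathrm{sgn}(x)|x|^{m+1}$ (the case $x < 0$ being symmetric), after shrinking $\epsilon$ so that $Kx$ stays in range. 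Promoting this first-derivative bound to the pointwise two-sided bound $f''(x) \asymp |x|^m$ is the crux, and is where finer structure of $f$ enters — for instance, if $f$ is real-analytic then $f - 1 = a_{m+2}x^{m+2} + O(x^{m+3})$ with $a_{m+2} > 0$ (the lower bound in $(3)$ rules out $a_{m+2} = 0$ and forces the correct sign), whence $f'' = a_{m+2}(m+1)(m+2)x^m + O(x^{m+1}) \asymp |x|^m$. Once $(3)\Rightarrow(1)$ is secured this way, the cycle closes and the three conditions are equivalent.
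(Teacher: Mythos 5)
Your curvature formulas and your treatment of $(1)\Leftrightarrow(2)$ and $(1)\Rightarrow(3)$ are correct and essentially coincide with the paper's: the paper records $K_{\perp}=-f''/f$ and the same Ricci expression, integrates $f''\approx|x|^{m}$ twice to get $f'\approx|x|^{m+1}$ and $f-1\approx|x|^{m+2}$, and observes that the $(f'/f)^{2}$-term is of lower order. Your shortcut for $(2)\Rightarrow(1)$, evaluating $\mathrm{Ric}$ at $v=X$ to read off $f''/f\asymp|x|^{m}$ directly, is in fact cleaner than the paper's cycle $(1)\Rightarrow(2)\Rightarrow(3)\Rightarrow(1)$.

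The gap is in $(3)\Rightarrow(1)$, and you flag it yourself: you only obtain the pointwise two-sided bound $f''\asymp|x|^{m}$ under an additional real-analyticity hypothesis, which is not among the assumptions of the lemma (the metric is only assumed to be of finite smoothness). Your convexity pinching does give $f'\asymp|x|^{m+1}$ rigorously, but pointwise control of $f''$ genuinely does not follow from $f-1\asymp|x|^{m+2}$ together with $f''\geq 0$: one can build a smooth convex $f$ whose second derivative vanishes at a sequence of points accumulating at $0$ (destroying the lower bound $f''\geq c|x|^{m}$), or has narrow spikes of small mass (destroying the upper bound), while $f-1$ and $f'$ retain the correct orders. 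So, as a proof of the lemma as stated, this direction is incomplete. It is worth noting that the paper's own proof does not close this step either: it simply asserts that $f-1\approx x^{m+2}$ and $f'(0)=0$ give $f'\approx x^{m+1}$ and $f''\approx x^{m}$, i.e.\ it differentiates the asymptotic relation twice without justification, and its step $(2)\Rightarrow(3)$ likewise presupposes that $f-1\approx|x|^{k}$ for some exponent $k$. You have therefore isolated a genuine weak point of the argument; closing it as stated would require either the extra regularity you invoke (e.g.\ analyticity, or monotonicity-type structure of $f''$) or a reformulation of the equivalence.
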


\begin{proof}
Since $g=dx^{2}+f(x)^{2}g_{0}$, the radial curvature is 
\[
K_{\perp}(v)=-\frac{f''(x_{v})}{f(x_{v})},
\]
while the sectional curvature is given by 
\[
K_{\sigma}=-\frac{f''}{f}\cos^{2}\theta-\left(\frac{f'}{f}\right)^{2}\sin^{2}\theta,
\]
where $\theta$ is the angle between $X$ and $\sigma$.

Now we compute the Ricci curvature. If $v$ is normal, then $\text{Ric}(v)=(n-1)K_{\perp}(v)$
and we are done. Otherwise, denote by $v^{\perp}$ the perpendicular
complement in $T_{p}M$, $\theta$ the angle between $v$ and $X$,
and $H(v)$ the horizontal subspace at $v$. Since both $v^{\perp}$
and $H(v)$ have codimension 1, $v^{\perp}\cap H(v)$ has dimension
$n-2$. We can construct an orthonormal basis $\{e_{i}\}_{i=1}^{n}$
such that $e_{1}=v$, $e_{3},\cdots,e_{n}\in v^{\perp}\cap H(v)$,
and the section spanned by $e_{1},e_{2}$ is normal. Then we have
\begin{equation}
\text{Ric}(v)=-\frac{f''}{f}+(n-2)\left(-\frac{f''}{f}\cos^{2}\theta-\left(\frac{f'}{f}\right)^{2}\sin^{2}\theta\right)\label{eqn: ric_curv}
\end{equation}
When $\theta=0$, we get $\text{Ric}(v)=(n-1)K_{\perp}(v)$. Thus,
for any $v$, the Ricci curvature can be calculated using \eqref{eqn: ric_curv}.

$(1)\Rightarrow(2):$ Since $K_{\perp}=-f''/f$ and $f(0)=1$, we
have $f''\approx|x|^{m}$. Since $f'(0)=0$, $f'\approx|x|^{m+1}$.
Therefore, $\text{Ric}(v)\approx-|x_{v}|^{m}$ by \eqref{eqn: ric_curv}.

$(2)\Rightarrow(3):$ Assume $f-1\approx|x|^{k}$ for some $k>2$.
We have $(f'/f)^{2}\approx|x|^{2k-2}$ and $f''/f\approx|x|^{k-2}$.
Thus, $f''/f$ is the dominant term in \eqref{eqn: ric_curv}. Therefore,
$k=m+2$.

$(3)\Rightarrow(1):$ Since $f-1\approx x^{m+2}$ and $f'(0)=1$,
we have $f'\approx x^{m+1}$ and $f''\approx x^{m}$, thus $K_{\perp}=-f''/f\approx-|x|^{m}$. 
\end{proof}

\section{A lemma of Peres}

In this section, we prove a proof of Theorem \ref{thm:Peres_Lemma}
as Peres' original theorem \cite[Lemma 2]{Peres88} is for transformations.
\begin{thm}
\cite[Lemma 2]{Peres88} \label{thm:proof-peres}Let $\mathcal{F}=\{f_{t}\}$
be a continuous flow on a compact space $X$, and $\mu$ be an $\mathcal{F}-$invariant
probability measure. Then for every potential $\vp:X\to\mathbb{R}$
there exists some $v\in X$ 
\[
\frac{1}{T}\int_{0}^{T}\vp(f_{\tau}v)d\tau\geq \int_{X}\vp d\mu
\]
for all $T>0$. 
\end{thm}

Peres' proof is based on the Maximal Ergodic Theorem, and it works
almost line by line in the flow case. A version of the Maximal Ergodic
Theorem for flows can be found in \cite[Theorem 1.2, P.76]{Petersen83book}.
\begin{thm}[Maximal Ergodic Theorem]
Let $(X,\mathcal{B},\mu)$ be a probability space and $\mathcal{F}=\{f_{t}\}$
be a measure-preserving flow on $X$. If $\vp\in L^{1}(\mu)$ and
$\alpha\in\mathbb{R}$, then 
\[
\int_{\{\vp^{*}>\alpha\}}{\vp d\mu}\geq\alpha\cdot\mu\{\vp^{*}>\alpha\}
\]
where ${\displaystyle {\vp^{*}(x)=\sup_{T>0}\frac{1}{T}\int_{0}^{T}{\vp(f_{\tau}(x))d\tau}}.}$
\end{thm}

\begin{proof}[Proof of Theorem \ref{thm:proof-peres}]
For $\epsilon>0$, we define 
\[
E_{\epsilon}:=\{x\in X:\ \forall t\geq0,\ \frac{1}{t}\int_{0}^{t}\vp(f_{\tau}x)d\tau>\int_{X}\vp d\mu-\ep\}
\]
and 
\[
\Psi(x):=\int_{X}\vp d\mu-\vp-\epsilon.
\]
It is enough to show 
\[
\bigcap_{\epsilon>0}E_{\epsilon}\neq\emptyset.
\]
To see this, we first notice that $E_{\epsilon}=\{x:\ \Psi^{*}(x)\leq0\}$.
We then apply the Maximal Ergodic Theorem on $\Psi$ and $X\backslash E_{\epsilon}=\{x:\ \Psi^{*}(x)>0\}$,
i.e., $\alpha=0$, and get 
\[
\int_{X\backslash E_{\epsilon}}\Psi d\mu\geq0.
\]
Observe that $\int_{X}\Psi d\mu=-\epsilon$, and which guarantees
$E_{\epsilon}\neq\emptyset$ for all $\epsilon>0$. Since $\vp$ is
continuous and $X$ is compact, we know $E_{\epsilon}$'s are nested
compact sets, and thus 
\[
\bigcap_{\epsilon>0}E_{\epsilon}\neq\emptyset.
\]
\end{proof}

\bibliographystyle{amsalpha}
\bibliography{pgap_new}

\end{document}